\NeedsTeXFormat{LaTeX2e}[1994/12/01] \raggedbottom
\documentclass[a4paper, 11pt,]{article}

\usepackage{amsmath,amsthm}
\usepackage{amsfonts,amssymb}
\usepackage{graphicx}
\usepackage{calc}
\usepackage{multicol}
\raggedbottom \headheight 0pt \columnsep=0.8cm \raggedbottom
\textwidth=15.5cm \textheight=24cm \topmargin=-0.9cm
\evensidemargin=0.1cm \oddsidemargin=-0.1cm 
\parskip=2mm

\parindent 8pt

\def\normo#1{\left\|#1\right\|}

\def\aabs#1{\big|#1\big|}
\def\brk#1{\left(#1\right)}

\def\rev#1{\frac{1}{#1}}
\def\half#1{\frac{#1}{2}}
\def\norm#1{\|#1\|}
\def\jb#1{\langle#1\rangle}

\newcommand{\N}{{\mathbb N}}
\newcommand{\T}{{\mathbb T}}

\newcommand{\R}{{\mathbb R}}
\newcommand{\C}{{\mathbb C}}
\newcommand{\Z}{{\mathbb Z}}
\newcommand{\ft}{{\mathcal{F}}}

\newcommand{\les}{{\lesssim}}
\newcommand{\ges}{{\gtrsim}}
\newcommand{\ra}{{\rightarrow}}

\newcommand{\Sch}{{\mathcal{S}}}
\newcommand{\supp}{{\mbox{supp}}}

\numberwithin{equation}{section}

\newtheorem{theorem}{Theorem}[section]

\newtheorem{proposition}[theorem]{Proposition}

\newtheorem{remark}[theorem]{Remark}

\begin{document}
\title{\bf Global well posedness and inviscid limit for the  Korteweg-de
Vries-Burgers equation}

\author{\bf Zihua Guo and Baoxiang Wang \date{}\\
{\small \it
LMAM, School of Mathematical Sciences, Peking University, Beijing 100871, China}\\
{\small E-mails: zihuaguo, wbx@math.pku.edu.cn} }\maketitle

\maketitle


{\bf Abstract:} Considering the Cauchy problem for the Korteweg-de
Vries-Burgers equation
\begin{eqnarray*}
u_t+u_{xxx}+\epsilon |\partial_x|^{2\alpha}u+(u^2)_x=0,\ \
u(0)=\phi,
\end{eqnarray*}
where $0<\epsilon,\alpha\leq 1$ and $u$ is a real-valued function,
we show that it is globally well-posed in $H^s\ (s>s_\alpha)$, and
uniformly globally well-posed in $H^s\ (s>-3/4)$ for all $\epsilon
\in (0,1]$. Moreover, we prove that for any $T>0$, its solution
converges in $C([0,T]; \,H^s)$ to that of the KdV equation if
$\epsilon$ tends to $0$.

{\bf Keywords:} KdV-Burgers equation, uniform global wellposedness,
inviscid limit behavior

{\bf MSC 2000:} 35Q53

\section{Introduction}
In this paper, we study the Cauchy problem for the Korteweg-de
Vries-Burgers (KdV-B) equation with fractional dissipation
\begin{eqnarray}\label{eq:kdvb}
u_t+u_{xxx}+\epsilon |\partial_x|^{2\alpha}u+(u^2)_x=0, \ \
u(0)=\phi,
\end{eqnarray}
where $0<\epsilon, \alpha \leq 1$, $u$ is a real-valued function of
$(x, t) \in \mathbb{R}\times \mathbb{R}_+$. Eq. \eqref{eq:kdvb} has
been derived as a model for the propagation of weakly nonlinear
dipersive long waves in some physical contexts when dissipative
effects occur (cf. \cite{OS}). The global well-posedness of
\eqref{eq:kdvb} and the generalized KdV-Burgers equation has been
studied by many authors (see \cite{MR,MR2} and the reference
therein).

In \cite{MR} Molinet and Ribaud studied Eq. \eqref{eq:kdvb} in the
case $\alpha=1$ and showed that \eqref{eq:kdvb} is globally
well-posed in $H^{s}\ (s>-1)$. The main tool used in \cite{MR} is an
$X^{s,b}$-type space which contains the dissipative structure. Their
result is sharp in the sense that the solution map of
\eqref{eq:kdvb} fails to be $C^2$ smooth at $t=0$ if $s<-1$. In
particular, one can't get lower regularity simply using fixed-point
machinery. Note that $s=-1$ is lower than the critical index
$s=-3/4$ for the KdV equation and also lower than the critical index
$s=-1/2$ for the dissipative Burgers equation. The case $0<\alpha
<1$ was left open and it was conjectured in \cite{MR} that one can
get that \eqref{eq:kdvb} is globally well-posed in $H^s\
(s>s_c=(\alpha-3)/2(2-\alpha))$ by using the same strategy as
$\alpha=1$.

In the first part of this paper, we will study the global well
posedness of Eq. \eqref{eq:kdvb} by following some ideas in
\cite{MR}\footnote{After the paper was finished, the authors were
noted that the same results in this part were also obtained by
St\'{e}phane Vento \cite{Vento} using the similar method.}. The main
issue reduces to a bilinear estimate
\begin{equation}\label{eq:bi1}
\norm{\partial_x(uv)}_{X^{-1/2+\delta, s, \alpha}}\leq
C\norm{u}_{X^{1/2, s, \alpha}}\norm{v}_{X^{1/2, s, \alpha}}.
\end{equation}
For the definition of $X^{b, s, \alpha}$, one can refer to
\eqref{eq:Xs} below. We will apply the $[k;Z]$-multiplier method in
\cite{Tao} to prove \eqref{eq:bi1}. We obtain a critical number
\begin{eqnarray}\label{eq:sa}
s_\alpha=\left \{
\begin{array}{ll}
-3/4, &  0<\alpha\leq1/2,\\
-3/(5-2\alpha),&  1/2<\alpha\leq 1.
\end{array}
\right.
\end{eqnarray}
It is worth to note that $s_\alpha$ is strictly bigger than the
conjectured number $s_c$ for $0<\alpha<1$. We prove that
\eqref{eq:bi1} holds if  and only if $s>s_\alpha$.  So, it seems
that $s>s_\alpha$ is an essential limitation of this method.

In the second part of this paper, we study the inviscid limit
behavior of \eqref{eq:kdvb} when $\epsilon$ goes to $0$. Formally,
if $\epsilon=0$ then \eqref{eq:kdvb} reduces to the KdV equation
\begin{eqnarray}\label{eq:kdv}
u_t+u_{xxx}+(u^2)_x=0, \ \ u(0)=\phi.
\end{eqnarray}
The local well posedness of Eq. \eqref{eq:kdv} in $L^2$ was
established by Bourgain \cite{Bour} and the $X^{b,s}$-theory was
discovered. This local solution is a global one by using the
conservation of $L^2$ norm. The optimal result on local
well-posedness in $H^s$ was obtained by Kenig, Ponce, Vega
\cite{KPV}, where they developed the sharp bilinear estimates and
obtained that \eqref{eq:kdv} is locally well-posed for $s>-3/4$. The
sharp result on global well-posedness in $H^s$ was obtained  in
\cite{Tao2}, it was shown that \eqref{eq:kdv} is globally well-posed
in $H^s$ for $s>-3/4$, where a kind of modified energy method, so
called I-method, is introduced.

A natural question is whether the solution of \eqref{eq:kdvb}
converges to that of \eqref{eq:kdv} if $\epsilon$ goes to $0$. We
will prove that the global solution of \eqref{eq:kdvb} converges to
the solution of \eqref{eq:kdv} as $\epsilon\to 0$ in the natural
space $C([0,T],H^s)$ for $-3/4<s\leq 0$. To achieve this, we need to
control the solution uniformly in $\epsilon$, which is independent
of the properties of dissipative term. We prove a uniform global
well-posedness result using $l^1$-variant $X^{b,s}$-type space and
the I-method. Notice that \eqref{eq:kdvb} is invariant under the
following scaling for $0<\lambda\leq 1$
\begin{equation}
u(x,t)\rightarrow \lambda^2 u(\lambda x, \lambda^3 t),\
\phi(x)\rightarrow \lambda^2\phi(\lambda x), \ \epsilon\rightarrow
\lambda^{3-2\alpha}\epsilon.
\end{equation}
The equation \eqref{eq:kdvb} has less symmetries than the KdV
equation \eqref{eq:kdv} due to the dissipative term. Hence the
proofs for the pointwise estimate of the multipliers in our argument
are different from those in the KdV equation \cite{Tao2}. The basic
idea is the same,  and to exploit dedicated cancelation to remove
the singularity in the denominator.

For the limit behavior, we need to study the difference equation
between \eqref{eq:kdvb} and \eqref{eq:kdv}. We first treat the
dissipative term as perturbation and then use the uniform Lipschitz
continuity property of the solution map. Similar idea can be found
in \cite{Wang} for the inviscid limit of the complex Ginzburg-Landau
equation. For $T>0$, we denote $S_T^{\epsilon}$, $S_T$ the solution
map of \eqref{eq:kdvb}, \eqref{eq:kdv} respectively. Now we state
our main results. The notations used in this paper can be found in
Section \ref{notation}.

\begin{theorem}\label{t11}
Assume $0<\epsilon, \alpha\leq 1$. Let $s_\alpha$ be given in
\eqref{eq:sa}. Let $\phi \in H^s(\R)$, $s>s_\alpha$. For any $T>0$,
there exists a unique solution $u_\epsilon$ of \eqref{eq:kdvb} in
\begin{equation}
Z_T=C([0,T],H^s)\cap X^{1/2,s,\alpha}_T.
\end{equation}
Moreover, the solution map $S_T^\epsilon: \phi\rightarrow u$ is
smooth from $H^s(\R)$ to $Z_T$ and $u$ belongs to
$C((0,\infty),H^{\infty}(\R))$.
\end{theorem}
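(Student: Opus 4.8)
The plan is to solve \eqref{eq:kdvb} by a contraction mapping argument in the dissipative Bourgain space $X^{1/2,s,\alpha}_T$, taking the bilinear estimate \eqref{eq:bi1} as the central nonlinear input, and then to upgrade the resulting local solution to a global one by exploiting the parabolic smoothing of the dissipative term together with the decay of the $L^2$ norm.

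First I would pass to the Duhamel formulation. Let $W_\epsilon(t)$ denote the linear propagator of $\partial_t+\partial_x^3+\epsilon|\partial_x|^{2\alpha}$, i.e. the Fourier multiplier $e^{it\xi^3-\epsilon t|\xi|^{2\alpha}}$, whose resolvent symbol $i(\tau-\xi^3)+\epsilon|\xi|^{2\alpha}$ is precisely the weight built into $X^{b,s,\alpha}$ in \eqref{eq:Xs}. A solution on $[0,T]$ satisfies
$$u(t)=W_\epsilon(t)\phi-\int_0^t W_\epsilon(t-t')\partial_x(u^2)(t')\,dt',$$
and after inserting a smooth time cutoff $\psi$ adapted to $[0,T]$ I would study the truncated map $\Phi$ sending $u$ to $\psi(t)W_\epsilon(t)\phi$ minus the cutoff Duhamel term, on a suitable ball of $X^{1/2,s,\alpha}$. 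The two linear estimates that make this work, namely the homogeneous bound $\norm{\psi W_\epsilon(\cdot)\phi}_{X^{1/2,s,\alpha}}\lesssim\norm{\phi}_{H^s}$ and the inhomogeneous bound controlling the Duhamel term by $\norm{F}_{X^{-1/2+\delta,s,\alpha}}$, follow from the structure of the modulation weight exactly as in the Molinet--Ribaud analysis of the case $\alpha=1$.

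Combining these linear estimates with \eqref{eq:bi1}, which holds precisely for $s>s_\alpha$, shows that $\Phi$ is a contraction, producing a unique solution in $Z_T$ on a time interval whose length depends only on $\norm{\phi}_{H^s}$; the $C([0,T],H^s)$ component comes from the standard continuity/embedding arguments for these spaces. Because $\Phi$ is quadratic in $u$ and linear in $\phi$, the fixed point depends real-analytically on $\phi$ (by a convergent multilinear expansion), which gives the asserted smoothness of the solution map $S_T^\epsilon:\phi\mapsto u$.

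The remaining and, I expect, most delicate point is globalization. Pairing \eqref{eq:kdvb} with $u$ gives the identity $\tfrac12\tfrac{d}{dt}\norm{u}_{L^2}^2=-\epsilon\norm{|\partial_x|^\alpha u}_{L^2}^2\le0$, so the $L^2$ norm is non-increasing once the solution lies in $L^2$. Meanwhile the dissipative factor $e^{-\epsilon t|\xi|^{2\alpha}}$ produces instantaneous smoothing, so that $u(t)\in H^\infty$ for every $t>0$, which also yields the claimed regularity $u\in C((0,\infty),H^\infty)$. The main obstacle is to make this smoothing quantitative inside the $X$-space framework and to chain it with the $L^2$ bound when $s<0$: starting from rough $H^s$ data I must first run the local theory, then use the smoothing to reach $H^\infty\subset L^2$ after an arbitrarily short time, and only then invoke the non-increasing $L^2$ norm to keep the local existence time bounded below, ruling out blow-up and allowing reiteration up to any prescribed $T$.
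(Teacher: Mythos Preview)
Your proposal is correct and follows essentially the same approach as the paper: the paper also reduces Theorem~\ref{t11} to the bilinear estimate \eqref{eq:bi1} (Proposition~\ref{p31}) combined with the Molinet--Ribaud framework, using the truncated Duhamel formulation \eqref{eq:trunintekdvb}, the linear estimates for the free and inhomogeneous evolutions in $X^{1/2,s,\alpha}$, contraction mapping for local existence, and then the smoothing effect of the dissipative term together with the non-increasing $L^2$ norm \eqref{eq:L2law} for globalization. The only difference is emphasis: the paper devotes its effort in Section~3 to actually proving \eqref{eq:bi1} via the $[k;Z]$-multiplier calculus and Proposition~\ref{pchar}, whereas you (reasonably) take that estimate as input and focus on the well-posedness machinery, which the paper simply attributes to \cite{MR}.
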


Notice that the critical regularity for the fractional Burgers
equation is $s=3/2 -2\alpha$ in the sense of scaling. Thus if
$1/2<\alpha\leq 1$ then $s_\alpha$ is lower than the critical
regularity for the KdV and also for the fractional Burgers equation.
In the proof we need to exploit the properties of the dissipative
term both in bilinear estimates and regularity for the solution.
Therefore, the results in Theorem \ref{t11} depend on $\epsilon>0$.
For the uniform well-posedness, we have the following,

\begin{theorem}\label{t12}
Assume $0<\alpha\leq 1$ and $-3/4<s\leq 0$. Let $\phi \in H^s(\R)$.
Then for any $T>0$, the solution map $S_T^\epsilon$ in Theorem
\ref{t11} satisfies for all $0<\epsilon \leq 1$
\begin{equation}
\norm{S_T^\epsilon \phi}_{F^s(T)}\les C(T,\norm{u}_{H^s})
\end{equation}
where $F^s(T)\subset C([0,T];H^s)$ which will be defined later and
$C(\cdot,\cdot)$ is a continuous function with $C(\cdot,0)=0$, and
also satisfies that for all $0<\epsilon \leq 1$
\begin{eqnarray}
\norm{S_T^\epsilon (\phi_1)-S_T^\epsilon
(\phi_2)}_{C([0,T],H^s)}\leq
C(T,\norm{\phi_1}_{H^s},\norm{\phi_2}_{H^s})\norm{\phi_1-\phi_2}_{H^s}.
\end{eqnarray}
\end{theorem}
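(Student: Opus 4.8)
The plan is to upgrade the $\epsilon$-dependent theory of Theorem \ref{t11} to one that is uniform in $\epsilon\in(0,1]$ by working in a function space $F^s(T)$ that sees only the KdV dispersion and treats the dissipation as a favorable, sign-definite perturbation. Concretely, I would work in an $\ell^1$-based Bourgain space $\bar X^{1/2,s}$ whose weight depends only on the free symbol $\langle\tau-\xi^3\rangle$, so that all linear and bilinear estimates carry $\epsilon$-independent constants. The first task is a uniform local theory: prove the bilinear estimate $\norm{\partial_x(uv)}_{\bar X^{-1/2,s}}\les\norm{u}_{\bar X^{1/2,s}}\norm{v}_{\bar X^{1/2,s}}$ for $s>-3/4$ with a constant independent of $\epsilon$, and show that the Duhamel operator attached to the propagator $e^{-t(\partial_x^3+\epsilon|\partial_x|^{2\alpha})}$ is bounded on $\bar X^{1/2,s}$ uniformly in $\epsilon$ (here the dissipative factor $e^{-\epsilon t|\xi|^{2\alpha}}\le 1$ only improves the bound). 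A contraction argument then yields a local solution on a time interval whose length depends on $\norm{\phi}_{H^s}$ but not on $\epsilon$.

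To reach arbitrary $T$, I would run the I-method uniformly in $\epsilon$. Introduce the smoothing multiplier $I=I_N$ (the identity below frequency $N$, of order $|\xi|^{s}N^{-s}$ above) and track the modified energy $\norm{Iu(t)}_{L^2}^2$. Differentiating in time, the cubic symmetrization produces exactly the same commutator error term as for KdV, while the dissipation contributes $-2\epsilon\norm{|\partial_x|^{\alpha}Iu}_{L^2}^2\le 0$; thus an almost conservation law of the form $\norm{Iu(t)}_{L^2}^2\le\norm{Iu(0)}_{L^2}^2+N^{-\beta}\norm{Iu}_{\bar X^{1/2,0}}^3$ holds with a decay rate $\beta>0$ that is at least as good as in \cite{Tao2} and, crucially, is $\epsilon$-independent. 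Using the scaling $u(x,t)\to\lambda^2u(\lambda x,\lambda^3 t)$, $\epsilon\to\lambda^{3-2\alpha}\epsilon$ — which keeps $\epsilon\in(0,1]$ since $3-2\alpha>0$ — I would normalize the data, iterate the local bound and the almost conservation law over $\sim T$ unit time steps, and choose $N=N(T,\norm{\phi}_{H^s})$ so that the accumulated error stays bounded. Summing the local $\bar X^{1/2,s}$ norms over these subintervals gives the claimed $F^s(T)$ bound with $C(\cdot,0)=0$.

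For the Lipschitz estimate I would write the equation for the difference $w=u_1-u_2$, namely $w_t+w_{xxx}+\epsilon|\partial_x|^{2\alpha}w+((u_1+u_2)w)_x=0$, and run the same scheme on $w$: a uniform bilinear/linear estimate in $\bar X^{1/2,s}$ controls the difference of the nonlinearities in terms of $\norm{w}$ and the already-bounded norms of $u_1,u_2$, and an almost conservation law for $Iw$ (again with the dissipation sign-definite and harmless) propagates the $H^s$ control from $t=0$ to $[0,T]$, producing the stated Lipschitz constant $C(T,\norm{\phi_1}_{H^s},\norm{\phi_2}_{H^s})$.

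The main obstacle is the pointwise multiplier analysis inside the I-method. Unlike KdV, whose symbol $i\xi^3$ is antisymmetric and whose resonance identity $\xi_1^3+\xi_2^3+\xi_3^3=3\xi_1\xi_2\xi_3$ on $\sum\xi_i=0$ drives the cancellation, the KdV--Burgers symbol $i\xi^3+\epsilon|\xi|^{2\alpha}$ breaks time reversibility, so the symmetrized multiplier that must be bounded pointwise no longer enjoys the full permutation symmetry exploited in \cite{Tao2}. The key is to show that the extra dissipative contributions either carry a favorable sign or are dominated by the dispersive modulation in the denominator, uniformly in $\epsilon$; establishing these $\epsilon$-independent pointwise bounds — extracting the same cancellation without the KdV symmetry — is the technical heart of the argument.
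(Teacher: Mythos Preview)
Your plan matches the paper's approach closely: uniform local well-posedness in an $\ell^1$-modulation space $F^s$ (your $\bar X^{1/2,s}$) via $\epsilon$-independent linear and bilinear estimates, then the I-method with the full correction hierarchy $E_I^2\to E_I^3\to E_I^4$ of \cite{Tao2} to reach arbitrary $T$, with the dissipative shift of the resonance denominators $h_k\mapsto h_k-\epsilon\beta_{\alpha,k}$ in the correction multipliers $\sigma_3,\sigma_4$ as the technical core you correctly flag. One caveat: the uniform linear estimate for the Duhamel operator is not simply a consequence of $e^{-\epsilon t|\xi|^{2\alpha}}\le 1$ --- a pointwise bound does not control the $\ell^1$-in-modulation norm --- and the paper instead uses that $e^{-\lambda|t|}$ has $\dot B^{1/2}_{2,1}$ norm independent of $\lambda>0$ by scaling, combined with a paraproduct decomposition against the cutoff $\psi(t)$ (Propositions~\ref{p43} and~\ref{p45}); this scale-critical structure is precisely why the $\ell^1$ space $F^s$ works uniformly while $X^{1/2+,s}$ would not.
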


We also have the uniform persistence of regularity, following the
standard argument. The similar conclusions in Theorem \ref{t12} also
hold for the complex-valued equation \eqref{eq:kdvb} for a small
$T=T(\norm{u}_{H^s})>0$. Our final result is on the limit behavior.
\begin{theorem}\label{t13}
Assume $0<\alpha\leq 1$. Let $\phi \in H^s(\R)$, $-3/4<s\leq 0$. For
any $T>0$, then
\begin{equation}\label{eq:limitthm}
\lim_{\epsilon\rightarrow
0^+}\norm{S_T^\epsilon(\phi)-S_T(\phi)}_{C([0,T],H^s)}=0.
\end{equation}
\end{theorem}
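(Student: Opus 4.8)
The plan is to combine the uniform estimates of Theorem~\ref{t12} with a density argument so as to reduce the problem to smooth initial data, for which the dissipative term can be treated as a vanishing forcing in the KdV flow. Fix $\phi\in H^s(\R)$ with $-3/4<s\leq 0$, and for $N\in\N$ let $\phi_N$ be the truncation of $\phi$ to frequencies $\abs{\xi}\leq N$, so that $\phi_N\in H^\infty(\R)$ and $\norm{\phi-\phi_N}_{H^s}\to 0$ as $N\to\infty$. Splitting
\begin{align*}
S_T^\epsilon(\phi)-S_T(\phi)=\big[S_T^\epsilon(\phi)-S_T^\epsilon(\phi_N)\big]+\big[S_T^\epsilon(\phi_N)-S_T(\phi_N)\big]+\big[S_T(\phi_N)-S_T(\phi)\big],
\end{align*}
the first bracket is bounded, uniformly in $\epsilon$, by the Lipschitz estimate of Theorem~\ref{t12}, and the third by the analogous Lipschitz continuity of the KdV solution map $S_T$ on bounded sets of $H^s$ (from \cite{KPV,Tao2}); both are $\les C(T,\norm{\phi}_{H^s})\norm{\phi-\phi_N}_{H^s}$ and hence can be made arbitrarily small by taking $N$ large, independently of $\epsilon$. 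It therefore suffices to prove, for each fixed smooth $\phi_N$, that $\norm{S_T^\epsilon(\phi_N)-S_T(\phi_N)}_{C([0,T],H^s)}\to 0$ as $\epsilon\to 0^+$.

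Set $u_\epsilon=S_T^\epsilon(\phi_N)$, $u=S_T(\phi_N)$ and $w=u_\epsilon-u$. Subtracting \eqref{eq:kdv} from \eqref{eq:kdvb} gives
\begin{align*}
\partial_t w+\partial_x^3 w+\partial_x\big(w(u_\epsilon+u)\big)=-\epsilon\abs{\partial_x}^{2\alpha}u_\epsilon,\qquad w(0)=0,
\end{align*}
so that, reading $w$ through the Airy propagator, the dissipation enters only as an inhomogeneous forcing. I would estimate $w$ in the $l^1$-variant space $F^s(T)$: the bilinear estimate underlying Theorem~\ref{t12} controls the nonlinear term by $\norm{w}_{F^s(T)}\big(\norm{u_\epsilon}_{F^s(T)}+\norm{u}_{F^s(T)}\big)$ in a companion Duhamel space $N^s(T)$, while the forcing contributes $\norm{\epsilon\abs{\partial_x}^{2\alpha}u_\epsilon}_{N^s(T)}$.

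For the forcing I would exploit that $\phi_N$ is smooth together with the uniform persistence of regularity recorded after Theorem~\ref{t12}: $\norm{u_\epsilon}_{C([0,T],H^{s+2\alpha})}\leq C(T,N)$ uniformly in $0<\epsilon\leq 1$. Using the embedding $L^1_TH^s\hookrightarrow N^s(T)$ then yields
\begin{align*}
\norm{\epsilon\abs{\partial_x}^{2\alpha}u_\epsilon}_{N^s(T)}\les\epsilon\,\norm{\abs{\partial_x}^{2\alpha}u_\epsilon}_{L^1_TH^s}\les\epsilon\,T\,\norm{u_\epsilon}_{C([0,T],H^{s+2\alpha})}\les\epsilon\,C(T,N)\xrightarrow[\epsilon\to 0^+]{}0.
\end{align*}
Since Theorem~\ref{t12} bounds $\norm{u_\epsilon}_{F^s(T)}$ uniformly in $\epsilon$, while the KdV solution $u$ obeys the same bound in $F^s(T)$, by a common constant $M=M(T,\norm{\phi_N}_{H^s})$ independent of $\epsilon$, I would partition $[0,T]$ into a finite number $O_M(1)$ of subintervals on each of which the uniform local theory applies; on each subinterval $I_j$ the nonlinear term is absorbed and one obtains $\norm{w}_{F^s(I_j)}\les\norm{w(t_{j-1})}_{H^s}+\epsilon\,C(T,N)$. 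Propagating this bound through the finitely many subintervals and invoking $F^s(T)\hookrightarrow C([0,T],H^s)$ gives $\norm{w}_{C([0,T],H^s)}\les C(T,M)\,\epsilon\,C(T,N)\to 0$, which closes the argument.

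The step I expect to be the main obstacle is closing the difference estimate uniformly in $\epsilon$ at the low regularity $s>-3/4$. Because $w$ enters the nonlinearity through a spatial derivative, only the bilinear $X^{b,s}$-type estimate — not an energy estimate in the negative-index space $H^s$ — can control $\partial_x(w(u_\epsilon+u))$, and one must verify that this estimate, and the short-time absorption of the nonlinear term, hold in the $l^1$-variant space $F^s(T)$ with constants depending only on the uniform bound $M$. The remaining care is that the forcing estimate loses exactly $2\alpha$ derivatives with no compensating time decay, so that it is genuinely $O(\epsilon)$; this is precisely where the uniform persistence of regularity for smooth data is indispensable.
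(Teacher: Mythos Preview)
Your overall strategy matches the paper's: reduce to smooth data via density and the uniform Lipschitz bound of Theorem~\ref{t12}, then treat the dissipative term as a forcing in the KdV difference equation and estimate it in $N^s$. The difference lies in how the forcing is bounded. The paper does not invoke uniform persistence of regularity; instead it works at the $L^2$ level for the truncated data and first derives an $H^1$-type a priori estimate via the functional $H[u]=\int u_x^2-\tfrac23 u^3+u^2\,dx$, which yields the dissipative smoothing bound
\[
\epsilon^{1/2}\Big(\int_0^T\norm{\Lambda^{2\alpha}u_\epsilon(\tau)}_2^2\,d\tau\Big)^{1/2}\leq C(T,\norm{\phi_N}_{H^1}).
\]
Coupled with the embedding $L^2_tH^0\hookrightarrow N^0$ (Proposition~\ref{p42}) this gives $\norm{\epsilon\Lambda^{2\alpha}u_\epsilon}_{N^0}\les\epsilon^{1/2}C(T,N)$, hence a convergence rate $\epsilon^{1/2}$ in $L^2$ for $H^1$ data; the final $H^s$ bound then uses $L^2\hookrightarrow H^s$. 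Your route avoids the $H^1$ energy computation at the price of quoting persistence at level $H^{s+2\alpha}$ (which the paper only states without proof); it gives the cleaner rate $\epsilon$ but with an $N$-dependent constant, which is equally sufficient once the density argument is in place.

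One technical slip: the embedding $L^1_TH^s\hookrightarrow N^s(T)$ you invoke does not hold. The $X_k$ norm is an $\ell^1$ sum over dyadic modulation shells, and for an $L^1_t$ function one only has $\widehat f\in L^\infty_\tau$, which makes $\sum_j 2^{-j/2}\norm{\eta_j(\tau-\xi^3)\widehat f}_{L^2}$ logarithmically divergent in general. The correct tool is Proposition~\ref{p42}, namely $L^2_tH^s\hookrightarrow N^s$; since you already control $u_\epsilon\in C([0,T],H^{s+2\alpha})$, passing through $L^2_TH^s$ costs only a harmless factor $T^{1/2}$ and the rest of your argument goes through unchanged.
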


\begin{remark} \rm
We are only concerned with the limit in the same regularity space.
There seems no convergence rate. This can be seen from the linear
solution,
\begin{equation}
\norm{e^{-t\partial_{x}^3-t\epsilon|\partial_x|^{2\alpha}}\phi-e^{-t\partial_{x}^3}\phi}_{C([0,T],H^s)}\rightarrow
0, \quad \mbox{as }\epsilon\rightarrow 0,
\end{equation}
but without any convergence rate. We believe that there is a
convergence rate if we assume the initial data has higher regularity
than the limit space. For example, we prove that
\begin{equation}
\norm{S_T^\epsilon(\phi_1)-S_T(\phi_2)}_{C([0,T],L^2)}\les
\norm{\phi_1-\phi_2}_{L^2}+\epsilon^{1/2}C(T,\norm{\phi_1}_{H^1},\norm{\phi_2}_{L^2}).
\end{equation}

We only prove our results in the case $s\leq 0$ and our method also
works for $s>0$. For the complex valued equation \eqref{eq:kdvb},
the limit behavior \eqref{eq:limitthm} holds for a small
$T=T(\norm{\phi}_{H^s})>0$.
\end{remark}

The rest of the paper is organized as following. In Section 2 we
present some notations and Banach function spaces. The proof of
Theorem \ref{t11} is given in Section 3. We present uniform LWP in
Section 4 and prove Theorem \ref{t12} in Section 5. Theorem
\ref{t13} is proved in Section 6.

\section{Notation and Definitions} \label{notation}
For $x, y\in \R$, $x\sim y$ means that there exist $C_1, C_2 > 0$
such that $C_1|x|\leq |y| \leq C_2|x|$. For $f\in \Sch'$ we denote
by $\widehat{f}$ or $\ft (f)$ the Fourier transform of $f$ for both
spatial and time variables,
\begin{eqnarray*}
\widehat{f}(\xi, \tau)=\int_{\R^2}e^{-ix \xi}e^{-it \tau}f(x,t)dxdt.
\end{eqnarray*}
We denote  by $\ft_x$ the the Fourier transform on spatial variable
and if there is no confusion, we still write $\ft=\ft_x$. Let
$\mathbb{Z}$ and $\mathbb{N}$ be the sets of integers and natural
numbers, respectively. $\Z_+=\N \cup \{0\}$. For $k\in \Z_+$ let
\[{I}_k=\{\xi: |\xi|\in [2^{k-1}, 2^{k+1}]\}, \ k\geq 1; \quad I_0=\{\xi: |\xi|\leq 2\}.\]
Let $\eta_0: \R\rightarrow [0, 1]$ denote an even smooth function
supported in $[-8/5, 8/5]$ and equal to $1$ in $[-5/4, 5/4]$. For
$k\in \N$ let $\eta_k(\xi)=\eta_0(\xi/2^k)-\eta_0(\xi/2^{k-1})$ and
$\eta_{\leq k}=\sum_{k'=0}^k\eta_{k'}$. For $k\in \Z$ let
$\chi_k(\xi)=\eta_0(\xi/2^k)-\eta_0(\xi/2^{k-1})$. Roughly speaking,
$\{\chi_k\}_{k\in \mathbb{Z}}$ is the homogeneous decomposition
function sequence and $\{\eta_k\}_{k\in \mathbb{Z}_+}$ is the
non-homogeneous decomposition function sequence to the frequency
space.

For $k\in \Z_+$ let $P_k$ denote the operator on $L^2(\R)$ defined
by
\[
\widehat{P_ku}(\xi)=\eta_k(\xi)\widehat{u}(\xi).
\]
By a slight abuse of notation we also define the operator $P_k$ on
$L^2(\R\times \R)$ by the formula $\ft(P_ku)(\xi,
\tau)=\eta_k(\xi)\ft (u)(\xi, \tau)$. For $l\in \Z$ let
\[
P_{\leq l}=\sum_{k\leq l}P_k, \quad P_{\geq l}=\sum_{k\geq l}P_k.
\]

We define the Lebesgue spaces $L_T^qL_x^p$ and $L_x^pL_T^q$ by the
norms
\begin{equation}
\norm{f}_{L_T^qL_x^p}=\normo{\norm{f}_{L_x^p}}_{L_t^q([0,T])}, \quad
\norm{f}_{L_x^pL_T^q}=\normo{\norm{f}_{L_t^q([0,T])}}_{L_x^p}.
\end{equation}
We denote by $W_0$ the semigroup associated with Airy-equation
\[
\ft_x(W_0(t)\phi)(\xi)=\exp[i\xi^3t]\widehat{\phi}(\xi), \  \forall
\ t\in \R,\ \phi \in \mathcal {S}'.
\]
For $0<\epsilon\leq 1$ and $0< \alpha \leq 1$, we denote by
$W_\epsilon^\alpha$ the semigroup associated with the free evolution
of \eqref{eq:kdvb},
\[
\ft_x(W_\epsilon^\alpha(t)\phi)(\xi)=\exp[-\epsilon
|\xi|^{2\alpha}t+i\xi^3t]\widehat{\phi}(\xi),\ \forall \ t\geq 0,\
\phi \in \mathcal {S}',
\]
and we extend $W_\epsilon^\alpha$ to a linear operator defined on
the whole real axis by setting
\[
\ft_x(W_\epsilon^\alpha(t)\phi)(\xi)=\exp[-\epsilon
|\xi|^{2\alpha}|t|+i\xi^3t]\widehat{\phi}(\xi),\ \forall \ t\in \R,
\ \phi \in \mathcal {S}'.
\]

To study the low regularity of \eqref{eq:kdvb}, Molinet and Ribaud
introduce the variant version of Bourgain's spaces with dissipation
\begin{equation}\label{eq:Xs}
\norm{u}_{X^{b,s,\alpha}}=\norm{\jb{i(\tau-\xi^3)+|\xi|^{2\alpha}}^b\jb{\xi}^s\widehat{u}}_{L^2(\R^2)},
\end{equation}
where $\jb{\cdot}=(1+|\cdot|^2)^{1/2}$. The standard $X^{b,s}$ space
for \eqref{eq:kdv} used by Bourgain \cite{Bour} and Kenig, Ponce,
Vega \cite{KPV} is defined by
\begin{eqnarray*}
\norm{u}_{X^{b,s}}=\norm{\jb{\tau-\xi^3}^b\jb{\xi}^s\widehat{u}}_{L^2(\R^2)}.
\end{eqnarray*}
The space $X^{1/2,s,\alpha}$ turns out to be very useful to capture
both dispersive and dissipative effect. From the technical level,
the dissipation will give bounds below for the modulations. These
bounds will weaken the frequency interaction for $\alpha> 1/2$, but
won't for $\alpha\leq 1/2$.

In order to study the uniform global wellposedness for
\eqref{eq:kdvb} and the limit behavior, we use an $l^1$ Besov-type
norm of $X^{b,s}$. For $k\in \Z_+$ we define the dyadic
$X^{b,s}$-type normed spaces $X_k=X_k(\R^2)$,
\begin{eqnarray*}
X_k=\{f\in L^2(\R^2): &&f(\xi,\tau) \mbox{ is supported in }
I_k\times\R \mbox{ and }\\&& \norm{f}_{X_k}=\sum_{j=0}^\infty
2^{j/2}\norm{\eta_j(\tau-\xi^3)\cdot f}_{L^2}\}.
\end{eqnarray*}
Structures of this kind of spaces were introduced, for instance, in
\cite{Tata}, \cite{IKT} and \cite{In-Ke} for the BO equation. From
the definition of $X_k$, we see that for any $l\in \Z_+$ and $f_k\in
X_k$ (see also \cite{IKT}),
\begin{equation}
\sum_{j=0}^\infty 2^{j/2}\norm{\eta_j(\tau-\xi^3)\int
|f_k(\xi,\tau')|2^{-l}(1+2^{-l}|\tau-\tau'|)^{-4}d\tau'}_{L^2}\les
\norm{f_k}_{X_k}.
\end{equation}
Hence for any $l\in \Z_+$, $t_0\in \R$, $f_k\in X_k$, and $\gamma
\in \Sch(\R)$, then
\begin{equation}
\norm{\ft[\gamma(2^l(t-t_0))\cdot \ft^{-1}f_k]}_{X_k}\les
\norm{f_k}_{X_k}.
\end{equation}
For $-3/4<s\leq 0$, we define the following spaces:
\begin{eqnarray}
&&F^{s}= \{u\in \mathcal {S}'(\R^2):\ \norm{u}_{F^s}^2=\sum_{k \in
\Z_+}2^{2sk}\norm{\eta_k(\xi)\ft(u)}_{X_k}^2<\infty\},\\
&& N^{s}=  \{u\in \mathcal {S}'(\R^2): \norm{u}_{N^s}^2=\sum_{k\in
\Z_+}2^{2sk}\norm{(i+\tau-\xi^3)^{-1}\eta_k(\xi)\ft(u)}_{X_k}^2<\infty\}.
\end{eqnarray}

The space $F^s$ is between $X^{1/2,s}$ and $X^{1/2+,s}$. It can be
embedded into $C(\R;H^s)$ and into the  Strichartz-type space, say
$L^p_tL^q_x$ as $X^{1/2+,s}$. On the other hand, it has the same
scaling in time as $X^{1/2,s}$, which is crucial in the uniform
linear estimate, See section 4. That is the main reason for us
applying $F^s$.

For $T\geq 0$, we define the time-localized spaces
$X_T^{b,s,\alpha}$, $X_T^{b,s}$, $F^{s}(T)$, and $N^{s}(T)$
\begin{eqnarray}
&& \norm{u}_{X_T^{b,s,\alpha}}=\inf_{w\in
X^{b,s,\alpha}}\{\norm{w}_{X^{b,s,\alpha}},\  w(t)=u(t) \mbox{ on }
[0, T]\};\nonumber \\
&&\norm{u}_{X_T^{b,s}}=\inf_{w\in X^{b,s}}\{\norm{w}_{X^{b,s}}, \ \
w(t)=u(t) \mbox{ on } [0, T]\};\nonumber\\
&&\norm{u}_{F^{s}(T)}=\inf_{w\in F^{s}}\{\norm{w}_{F^{s}}, \
w(t)=u(t)
\mbox{ on } [0, T]\};\nonumber\\
&&\norm{u}_{N^{s}(T)}=\inf_{w\in N^{s}}\{\norm{w}_{N^{s}}, \
w(t)=u(t) \mbox{ on } [0, T]\}.
\end{eqnarray}

As a conclusion of this section we prove that the norm on $F^s$
controls some space-time norm as the norm $X^{1/2+,s}$. If applying
to frequency dyadic localized function, we see that the norm $F^s$
is almost the same as the norm $X^{1/2+,s}$. Fortunately, in
application we usually encounter this case. See \cite{Tao3} for a
survey on $X^{s,b}$ space.
\begin{proposition}\label{p21}
Let $Y$ be a Banach space of functions on $\R\times \R$ with the
property that
\[\norm{e^{it\tau_0}e^{-t\partial_x^3}f}_Y\les \ \norm{f}_{H^s(\R)}\]
holds for all $f\in H^s(\R)$ and $\tau_0\in \R$. Then we have the
embedding
\begin{equation}
\left(\sum_{k\in \Z_+}\norm{P_k u}_{Y}^2 \right)^{1/2}\lesssim
\norm{u}_{F^s}.
\end{equation}
\end{proposition}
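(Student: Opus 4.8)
The plan is to reduce everything to a single frequency-localized estimate, namely the claim that
\[
\norm{P_k u}_Y \les 2^{sk}\norm{\eta_k(\xi)\ft(u)}_{X_k}\qquad (k\in\Z_+),
\]
with constant uniform in $k$. Granting this, the proposition is immediate: squaring and summing in $k$ gives $\sum_{k}\norm{P_k u}_Y^2\les\sum_k 2^{2sk}\norm{\eta_k(\xi)\ft(u)}_{X_k}^2=\norm{u}_{F^s}^2$ by the very definition of $F^s$. So the whole argument takes place inside one dyadic block. Write $f_k=\eta_k(\xi)\ft(u)=\ft(P_k u)$, which is supported in $I_k\times\R$, and recall that on $I_k$ one has $\jb{\xi}\sim 2^k$.

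Next I would decompose $f_k$ in modulation, $f_k=\sum_{j\geq 0}f_{k,j}$ with $f_{k,j}=\eta_j(\tau-\xi^3)f_k$, and use the triangle inequality $\norm{P_k u}_Y\leq\sum_j\norm{\ft^{-1}f_{k,j}}_Y$. The key structural point is that each modulation piece is a continuous superposition of \emph{modulated free Airy evolutions}: substituting $\tau=\xi^3+\mu$ and setting $\widehat{\phi_\mu}(\xi)=f_{k,j}(\xi,\xi^3+\mu)$, one obtains (up to harmless constants)
\[
(\ft^{-1}f_{k,j})(x,t)=\int_\R e^{it\mu}\,\big(e^{-t\partial_x^3}\phi_\mu\big)(x)\,d\mu,
\]
where $e^{-t\partial_x^3}\phi_\mu$ is exactly a free solution and $e^{it\mu}$ is exactly the modulation factor in the hypothesis with $\tau_0=\mu$. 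Applying Minkowski's integral inequality in $Y$ and then the hypothesis uniformly in $\tau_0=\mu$ gives $\norm{\ft^{-1}f_{k,j}}_Y\leq\int_\R\norm{e^{it\mu}e^{-t\partial_x^3}\phi_\mu}_Y\,d\mu\les\int_\R\norm{\phi_\mu}_{H^s}\,d\mu$.

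Now the gain comes from two elementary facts. Since $f_{k,j}$ carries the factor $\eta_j(\tau-\xi^3)$, the variable $\mu=\tau-\xi^3$ ranges over a set of measure $\les 2^j$, so Cauchy--Schwarz in $\mu$ yields $\int_\R\norm{\phi_\mu}_{H^s}\,d\mu\les 2^{j/2}\big(\int_\R\norm{\phi_\mu}_{H^s}^2\,d\mu\big)^{1/2}$. And because $\jb{\xi}\sim 2^k$ on $I_k$, Plancherel in $\xi$ together with Fubini and the measure-preserving change $\tau=\xi^3+\mu$ give $\int_\R\norm{\phi_\mu}_{H^s}^2\,d\mu\sim 2^{2sk}\norm{f_{k,j}}_{L^2}^2$. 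Combining, $\norm{\ft^{-1}f_{k,j}}_Y\les 2^{sk}2^{j/2}\norm{f_{k,j}}_{L^2}$, and summing over $j$ reconstitutes precisely the $X_k$-norm $\sum_j 2^{j/2}\norm{f_{k,j}}_{L^2}=\norm{f_k}_{X_k}$, which closes the block estimate.

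The step I expect to require the most care is the superposition representation and the interchange of the $Y$-norm with the $\mu$-integral: the displayed identity should be read as a Bochner integral of the $Y$-valued map $\mu\mapsto e^{it\mu}e^{-t\partial_x^3}\phi_\mu$, and Minkowski's inequality in this setting is cleanest to justify by first taking $f_{k,j}$ smooth and compactly supported (so all integrals converge absolutely) and then passing to the limit. The other genuinely load-bearing step, and the reason the $X_k$ weights are tuned to $2^{j/2}$, is the Cauchy--Schwarz gain: moving from the $L^1_\mu$ bound furnished by the triangle inequality to an $L^2_\mu$ quantity costs exactly the factor $2^{j/2}$ coming from the measure of the modulation support, which is precisely what the $X_k$ norm is designed to absorb.
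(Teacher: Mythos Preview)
Your proof is correct and follows essentially the same route as the paper: reduce to the single-block estimate $\norm{P_k u}_Y\les 2^{sk}\norm{\eta_k(\xi)\ft(u)}_{X_k}$, decompose in modulation, change variables $\tau=\xi^3+\mu$ to express each piece as a $\mu$-superposition of modulated free Airy evolutions, apply Minkowski plus the hypothesis on $Y$, and then use Cauchy--Schwarz over the modulation support of size $\sim 2^j$ together with $\jb{\xi}\sim 2^k$ on $I_k$. The paper compresses the Cauchy--Schwarz step into a single inequality, whereas you spell it out explicitly, but the argument is the same.
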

\begin{proof}
In view of definition, it suffices to prove that if $k\in \Z_+$
\begin{equation}
\norm{P_ku}_Y \lesssim  2^{sk}\norm{\eta_k(\xi)\ft(u)}_{X_k}.
\end{equation}
Indeed, we have
\begin{align}
P_ku =&\int \eta_k(\xi)\ft u(\xi,\tau) e^{ix\xi}e^{it\tau}d\xi d\tau \nonumber\\
=&\sum_{j=0}^\infty\int \eta_j(\tau-\xi^3)\eta_k(\xi)\ft
u(\xi,\tau) e^{ix\xi}e^{it\tau}d\xi d\tau \nonumber\\
=&\sum_{j=0}^\infty\int \eta_j(\tau)e^{it\tau} \int \eta_k(\xi)\ft
u(\xi,\tau+\xi^3) e^{ix\xi}e^{it\xi^3}d\xi d\tau.
\end{align}
From the hypothesis on $Y$, we obtain
\begin{eqnarray}
\norm{P_k u}_Y &\les& \sum_{j=0}^\infty \int \eta_j(\tau)
\normo{e^{it\tau} \int \eta_k(\xi)\ft u(\xi,\tau+\xi^3)
e^{ix\xi}e^{it\xi^3}d\xi}_{Y}d\tau \nonumber\\
&\les& 2^{sk}\norm{\eta_k(\xi)\ft(u)}_{X_k},
\end{eqnarray}
which completes the proof of the proposition.
\end{proof}

\section{Global well-posedness for KdV-B equation}
In this section, we prove a global wellposedness result for the
KdV-Burgers equation by following the idea of Molinet and Ribaud
\cite{MR}. Using Duhamel's principle, we will mainly work on the
integral formulation of the KdV-Burgers equation
\begin{equation}\label{eq:intekdvb}
u(t)=W_\epsilon^\alpha(t)\phi_1-\half{1}\int_0^tW_\epsilon^\alpha(t-\tau)\partial_x(u^2(\tau))d\tau,
\ t\geq 0.
\end{equation}
We will apply a fixed point argument to solve the following
truncated version
\begin{equation}\label{eq:trunintekdvb}
u(t)=\psi(t)\left[W_\epsilon^\alpha(t)\phi_1-\half{\chi_{\R_+}(t)}\int_0^tW_\epsilon^\alpha(t-\tau)\partial_x(\psi_T^2(\tau)u^2(\tau))d\tau
\right],
\end{equation}
where $t\in \R$ and $\psi$ is a smooth time cutoff function
satisfying
\begin{equation}\label{eq:cutoff}
\psi \in C_0^\infty(\R), \quad \supp\ \psi \subset [-2, 2], \quad
\psi\equiv 1 \mbox{ on } [-1, 1],
\end{equation}
and $\psi_T(\cdot)=\psi(\cdot/T)$. Indeed, if $u$ solves
\eqref{eq:trunintekdvb} then $u$ is a solution of
\eqref{eq:intekdvb} on $[0, T]$, $T\leq 1$.

Theorem \ref{t11} can be proved by a slightly modified argument in
\cite{MR} combined with the following bilinear estimate. See also
\cite{Vento}.
\begin{proposition}\label{p31}
Let $s_\alpha$ be given by \eqref{eq:sa}. Let $s \in (s_\alpha, 0]$,
$0<\delta\ll 1$, then there exists $C_{s,\alpha}>0$ such that for
any $u, v \in \Sch$,
\begin{equation}\label{eq:bikdvb}
\norm{\partial_x(uv)}_{X^{-1/2+\delta, s, \alpha}}\leq
C_{s,\alpha}\norm{u}_{X^{1/2, s, \alpha}}\norm{v}_{X^{1/2, s,
\alpha}}.
\end{equation}
\end{proposition}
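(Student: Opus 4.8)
The plan is to prove \eqref{eq:bikdvb} by duality combined with a Littlewood--Paley decomposition in both the frequency and the modulation variables, reducing it to a family of dyadic weighted-convolution estimates that can be handled by the $[k;Z]$-multiplier calculus of \cite{Tao}. Writing $\sigma=\tau-\xi^3$ and using $\jb{i(\tau-\xi^3)+|\xi|^{2\alpha}}\sim\max(\jb{\tau-\xi^3},\jb{\xi}^{2\alpha})$, I first set $a=\jb{i\sigma+|\xi|^{2\alpha}}^{1/2}\jb{\xi}^s\widehat u$ and $b=\jb{i\sigma+|\xi|^{2\alpha}}^{1/2}\jb{\xi}^s\widehat v$, so that $\norm{u}_{X^{1/2,s,\alpha}}=\norm{a}_{L^2}$ and $\norm{v}_{X^{1/2,s,\alpha}}=\norm{b}_{L^2}$. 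Pairing $\partial_x(uv)$ against a test function $c\in L^2$ and substituting, the claim \eqref{eq:bikdvb} is equivalent to the trilinear bound
\[
\int_{*}\frac{|\xi|\,\jb{\xi}^s}{\jb{\xi_1}^s\jb{\xi_2}^s}\,\frac{a(\xi_1,\tau_1)\,b(\xi_2,\tau_2)\,c(\xi,\tau)}{\Lambda^{1/2-\delta}\,\Lambda_1^{1/2}\,\Lambda_2^{1/2}}\lesssim\norm{a}_{L^2}\norm{b}_{L^2}\norm{c}_{L^2},
\]
where $\int_*$ denotes integration over $\xi=\xi_1+\xi_2,\ \tau=\tau_1+\tau_2$, and $\Lambda=\max(\jb{\sigma},\jb{\xi}^{2\alpha})$, $\Lambda_j=\max(\jb{\sigma_j},\jb{\xi_j}^{2\alpha})$ are the effective modulations carrying the dissipative floor.

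The engine of the argument is the KdV resonance identity: on the support $\xi=\xi_1+\xi_2$ one has, with $\sigma_j=\tau_j-\xi_j^3$, the relation $\sigma-\sigma_1-\sigma_2=-3\xi\xi_1\xi_2$, so that the largest of the three modulations satisfies $\max(\jb{\sigma},\jb{\sigma_1},\jb{\sigma_2})\gtrsim|\xi\xi_1\xi_2|$. After localizing the frequencies to dyadic scales $N,N_1,N_2$ and the effective modulations to $\Lambda,\Lambda_1,\Lambda_2$, this yields $\max(\Lambda,\Lambda_1,\Lambda_2)\gtrsim\max\bigl(NN_1N_2,\,N^{2\alpha},\,N_1^{2\alpha},\,N_2^{2\alpha}\bigr)$, the resonance gain now reinforced by the dissipation. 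I would estimate each dyadic block by the elementary $L^2$ weighted-convolution bounds for the Airy symbol underlying the calculus of \cite{Tao}, namely the bilinear Strichartz-type estimate and its refinement when the two input frequencies are widely separated; these control the convolution in terms of the smallest modulation scale and the relevant frequency gap.

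The core of the proof is then the case analysis over the frequency configurations---high-high-to-low ($N_1\sim N_2\gg N$), high-low-to-high ($N\sim N_1\gg N_2$ and its symmetric partner), comparable frequencies, and the genuinely low-frequency exceptional regions---combining in each case the weight $|\xi|\jb{\xi}^s/(\jb{\xi_1}^s\jb{\xi_2}^s)$, the modulation denominators, and the appropriate convolution bound, and summing the resulting geometric series in the dyadic parameters. The extremal configuration fixes the threshold \eqref{eq:sa}: the derivative $\partial_x$ in the nonlinearity contributes one power of frequency, so the dissipative floor $N^{2\alpha}$ is able to absorb it precisely when $2\alpha>1$. Thus for $\alpha\le1/2$ the floor is too weak to improve on the pure KdV book-keeping and the summation converges exactly when $s>-3/4$, whereas for $\alpha>1/2$ the bounds $\Lambda_j\gtrsim N_j^{2\alpha}$ enter decisively in the critical block and relax the convergence condition to $s>-3/(5-2\alpha)$.

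I expect the main obstacle to be isolating this extremal block and tracking how the resonance-driven gain $NN_1N_2$ competes against the dissipation-driven floor $N^{2\alpha}$, since it is exactly their balance that produces the transition at $\alpha=1/2$ and the sharp value of $s_\alpha$. A secondary difficulty is the endpoint summation: at $s=s_\alpha$ a logarithmic divergence appears, and one must verify that the small gain $\delta>0$ in the output modulation weight $\Lambda^{-1/2+\delta}$ suffices to restore summability for every $s>s_\alpha$.
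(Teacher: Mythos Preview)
Your plan is the paper's own approach---duality to a $[3;\R^2]$-multiplier bound, dyadic decomposition in frequency and modulation, Tao's block estimates (Proposition~\ref{pchar}), and case-by-case summation---and is correct in outline.

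Two points need sharpening. First, your heuristic for the transition at $\alpha=1/2$ is off: it is not that the derivative contributes one power of $N$ which the floor $N^{2\alpha}$ absorbs when $2\alpha>1$. The extremal block is the high-high-to-low interaction $N_1\sim N_2\sim N\gg N_3=N_{\min}$ with the \emph{output} modulation dominant (case \eqref{eq:bicase123} in the paper), and there the derivative contributes only $N_{\min}$. What actually matters is the competition, inside Tao's case~(ii) bound \eqref{eq:charii}, between the median-modulation scale and the dissipative floor $N^{2\alpha}$ on the two high-frequency \emph{inputs}; the crossover $N_{\min}\sim N^{\alpha-1/2}$ in the resulting sum (the terms $B_1,B_2$ in the paper) is what produces $s_\alpha=-3/(5-2\alpha)$ for $\alpha>1/2$ and leaves the KdV threshold $-3/4$ intact for $\alpha\le 1/2$. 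If you follow your stated heuristic literally you will look for the threshold in the wrong block.

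Second, in your denominator the floors and modulations are paired as $\jb{L_j+N_j^{2\alpha}}$, but after symmetrizing to $L_{\min},L_{\rm med},L_{\max}$ and $N_{\min},N_{\rm med},N_{\max}$ the pairing is no longer obvious. The paper handles this with a small rearrangement inequality (Proposition~\ref{pdeno}) guaranteeing $\prod_j(L_j+N_j^{2\alpha})\ge\prod_j(L_{(j)}+N_{(j)}^{2\alpha})$ for the sorted sequences; you will need this or an equivalent device to close the case~(iii) summation cleanly.
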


This type of estimate was systematically studied in \cite{Tao}, see
also \cite{KPV} for an elementary method. We will follow the idea in
\cite{Tao} to prove Proposition \ref{p31}. Let $Z$ be any abelian
additive group with an invariant measure $d\xi$. In particular,
$Z=\R^2$ in this paper. For any $k\geq 2$, Let $\Gamma_k(Z)$ denote
the hyperplane in $\R^k$
\[\Gamma_k(Z):=\{(\xi_1,\ldots, \xi_k)\in Z^k: \xi_1+\ldots+\xi_k=0\}\]
endowed with the induced measure
\[\int_{\Gamma_k(Z)}f:=\int_{Z^{k-1}}f(\xi_1,\ldots, \xi_{k-1},-\xi_1-\ldots-\xi_{k-1})d\xi_1\ldots d\xi_{k-1}.\]
Note that this measure is symmetric with respect to permutation of
the co-ordinates.

A function $m: \Gamma_k(Z)\rightarrow \C$ is said to to be a
$[k;Z]-multiplier$, and we define the norm $\norm{m}_{[k;Z]}$ to be
the best constant such that the inequality
\begin{equation}
\left|\int_{\Gamma_k(Z)}m(\xi)\prod_{j=1}^k f_i(\xi_i)\right|\leq
\norm{m}_{[k;Z]}\prod_{j=1}^k \norm{f_i}_{L^2}
\end{equation}
holds for all test functions $f_i$ on $Z$.

By duality and Plancherel's equality, it is easy to see that for
\eqref{eq:bikdvb}, it suffices to prove
\begin{equation}\label{eq:bi}
\normo{\frac{|\xi_3|\jb{\xi_3}^s\jb{\xi_1}^{-s}\jb{\xi_2}^{-s}\jb{i(\tau_3-\xi_3)+|\xi_3|^{2\alpha}}^{-1/2+\delta}}{\jb{i(\tau_2-\xi_2)+|\xi_2|^{2\alpha}}^{1/2}\jb{i(\tau_1-\xi_1)+|\xi_1|^{2\alpha}}^{1/2}}}_{[3;\R^2]}\les
1.
\end{equation}
By comparision principle (see \cite{Tao}), it suffices to prove that
\begin{eqnarray}
&&\sum_{N_1,N_2,N_3}\sum_{L_1,L_2,L_3}\sum_{H}\frac{N_3\jb{N_3}^s\jb{N_1}^{-s}\jb{N_2}^{-s}}{\jb{L_1+N_1^{2\alpha}}^{1/2}\jb{L_2+N_2^{2\alpha}}^{1/2}\jb{L_3+N_3^{2\alpha}}^{1/2-\delta}}\nonumber\\
&&\qquad
\qquad\norm{\chi_{N_1,N_2,N_3;H;L_1,L_2,L_3}}_{[3;\R^2]}\les 1,
\end{eqnarray}
where $N_i,L_i, H$ are dyadic, $h(\xi)=\xi_1^3+\xi_2^3+\xi_3^3$ and
\begin{eqnarray}
&&\chi_{N_1,N_2,N_3;H;L_1,L_2,L_3}=\chi_{|\xi_1|\sim N_1,|\xi_2|\sim
N_2,|\xi_3|\sim N_3}\nonumber\\
&&\qquad \cdot \chi_{|h(\xi)|\sim H}\chi_{|\tau_1-\xi_1^3|\sim
L_1,|\tau_2-\xi_2^3|\sim L_2,|\tau_3-\xi_3^3|\sim L_3}.
\end{eqnarray}
The issues reduce to an estimate of
\begin{equation}\label{eq:char}
\norm{\chi_{N_1,N_2,N_3;H;L_1,L_2,L_3}}_{[3;\R^2]}
\end{equation}
and dyadic summation. Since
\[\xi_1+\xi_2+\xi_3=0,\quad |h(\xi)|=|\xi_1^3+\xi_2^3+\xi_3^3|\sim N_1N_2N_3,\]
and
\[\tau_1-\xi_1^3+\tau_2-\xi_2^3+\tau_3-\xi_3^3+h(\xi)=0,\]
then we have
\begin{eqnarray}\label{eq:dyadic1}
N_{max}&\sim& N_{med},\nonumber\\
L_{max}&\sim& \max(L_{med}, H),
\end{eqnarray}
where we define $N_{max}\geq N_{med}\geq N_{min}$ to be the maximum,
median, and minimum of $N_1,\ N_2,\ N_3$ respectively. Similarly
define $L_{max}\geq L_{med}\geq L_{min}$. It's known (see Section 4,
\cite{Tao}) that we may assume
\begin{equation}\label{eq:dyadic2}
N_{max}\ges 1, \quad L_1,L_2,L_3\ges 1.
\end{equation}
Therefore, from Schur's test (Lemma 3.11, \cite{Tao}) it suffices to
prove that
\begin{eqnarray}\label{eq:bicase1}
&&\sum_{N_{max}\sim N_{med}\sim N}\sum_{L_1,L_2,L_3\geq 1}\frac{N_3\jb{N_3}^s\jb{N_1}^{-s}\jb{N_2}^{-s}}{\jb{L_1+N_1^{2\alpha}}^{1/2}\jb{L_2+N_2^{2\alpha}}^{1/2}\jb{L_3+N_3^{2\alpha}}^{1/2-\delta}}\nonumber\\
&&\qquad \qquad \times
\norm{\chi_{N_1,N_2,N_3;L_{max};L_1,L_2,L_3}}_{[3;\R^2]}
\end{eqnarray}
and
\begin{eqnarray}\label{eq:bicase2}
&&\sum_{N_{max}\sim N_{med}\sim N}\sum_{L_{max}\sim L_{med}}\sum_{H \leq L_{max}}\frac{N_3\jb{N_3}^s\jb{N_1}^{-s}\jb{N_2}^{-s}}{\jb{L_1+N_1^{2\alpha}}^{1/2}\jb{L_2+N_2^{2\alpha}}^{1/2}\jb{L_3+N_3^{2\alpha}}^{1/2-\delta}}\nonumber\\
&&\qquad \qquad \times
\norm{\chi_{N_1,N_2,N_3;H;L_1,L_2,L_3}}_{[3;\R^2]}
\end{eqnarray}
are both uniformly bounded for all $N\ges 1$.

\begin{proposition}[Proposition 6.1, \cite{Tao}]\label{pchar}
Let dyadic numbers $H,N_1,N_2,N_3,L_1,L_2,L_3>0$ obey \eqref{eq:dyadic1}, \eqref{eq:dyadic2}.\\
(i) If $N_{max}\sim N_{min}$ and $L_{max}\sim H$, then we have
\begin{equation}\label{eq:chari}
\eqref{eq:char} \les L_{min}^{1/2}N_{max}^{-1/4}L_{med}^{1/4}.
\end{equation}
(ii) If $N_2\sim N_3 \gg N_1$ and $H\sim L_1\ges L_2,L_3$, then
\begin{equation}\label{eq:charii}
\eqref{eq:char} \les
L_{min}^{1/2}N_{max}^{-1}\min(H,\frac{N_{max}}{N_{min}}L_{med})^{1/2}.
\end{equation}
Similarly for permutations.\\
(iii) In all other cases, we have
\begin{equation}\label{eq:chariii}
\eqref{eq:char} \les L_{min}^{1/2}N_{max}^{-1}\min(H,L_{med})^{1/2}.
\end{equation}
\end{proposition}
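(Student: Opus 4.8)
The plan is to convert the multiplier norm \eqref{eq:char} into a one–dimensional measure estimate for the resonance function and then to read off the three bounds from the geometry of the cubic $h(\xi)=\xi_1^3+\xi_2^3+\xi_3^3=3\xi_1\xi_2\xi_3$ restricted to $\Gamma_3(\R^2)$. The first step I would carry out is the basic slicing inequality: by duality, writing $\xi_3=-\xi_1-\xi_2$ and $\tau_3=-\tau_1-\tau_2$ on the hyperplane, one pairs one of the three factors against the product of the other two and applies Cauchy--Schwarz twice. This bounds \eqref{eq:char} by $\sup|S|^{1/2}$, where $S$ is a two–dimensional slice of the support obtained by freezing one frequency--modulation pair. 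For fixed outer variable the two remaining $\tau$–integrations contribute $\min(L_b,L_c)$ (the overlap length of two modulation intervals), and the requirement that these intervals meet forces $|h(\xi)|$ to lie within $\max(L_b,L_c)$ of a fixed value, a constraint on the inner frequency. Hence $|S|\les \min(L_b,L_c)\,|A|$, where $A$ is the admissible inner–frequency set, so that \eqref{eq:char} $\les \min(L_b,L_c)^{1/2}|A|^{1/2}$. We retain the freedom to choose which variable to freeze, which will be decisive below.

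Next I would identify $A$ with a sublevel set of $h$. Intersecting the global constraint $|h|\sim H$ with the overlap window confines $h$ to an interval of length $\min(H,\max(L_b,L_c))$, so $A\subset\{\xi:\ |\xi_i|\sim N_i,\ |h(\xi)-c|\les \min(H,\max(L_b,L_c))\}$ for some constant $c$. Differentiating $h$ along the slicing direction (with the outer frequency frozen) gives $|\partial h|\sim |\xi_a|\,|\xi_b-\xi_c|$ and $|\partial^2 h|\sim N_{max}$, and these two quantities organize the whole argument. To obtain the three bounds one freezes the variable of largest modulation, so that $\min(L_b,L_c)=L_{min}$ and the window is $\max(L_b,L_c)=L_{med}$. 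In the generic configuration, case (iii), the inner frequencies are transverse, $|\partial h|\sim N_{max}^2$, whence $|A|\les \min(H,L_{med})/N_{max}^2$ and \eqref{eq:chariii} follows.

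It remains to treat the two degenerate configurations, which I expect to contain the real difficulty. In case (i), where $N_{max}\sim N_{min}$, the first derivative $\partial h\sim |\xi_a||\xi_b-\xi_c|$ degenerates on the diagonal $\xi_b=\xi_c$, so the transversality bound is unavailable. Here I would instead use $|\partial^2 h|\sim N_{max}$ together with a sublevel–set (van der Corput–type) estimate for functions with nonvanishing second derivative, giving $|A|\les (L_{med}/N_{max})^{1/2}$ — using $\min(H,L_{med})=L_{med}$ since $L_{max}\sim H$ — which yields \eqref{eq:chari}.

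The high--high--low interaction of case (ii), with $N_2\sim N_3\gg N_1$ and $H\sim L_1\ges L_2,L_3$, is the main obstacle, because here the largest modulation sits on the low–frequency factor. To extract the factor $L_{min}^{1/2}$ one is then forced to freeze the low frequency $\xi_1$ and slice over the two high frequencies, along which $|\partial h|\sim N_1|\xi_2-\xi_3|\sim N_{min}N_{max}$ rather than $N_{max}^2$; balancing this against the range of the sliced frequency produces the branch $\frac{N_{max}}{N_{min}}L_{med}$. A complementary slicing that instead freezes one high frequency recovers the branch governed by $H$, and combining the two gives $|A|\les N_{max}^{-2}\min(H,\frac{N_{max}}{N_{min}}L_{med})$, hence \eqref{eq:charii}. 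The point to emphasize is that cases (i) and (ii) are precisely where the cubic loses its generic transversality on $\Gamma_3(\R^2)$ — either through a vanishing first derivative on the diagonal, or through the mismatch between the location of the largest modulation and the good slicing direction — which is exactly why they must be separated from the generic case.
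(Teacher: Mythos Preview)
The paper does not give its own proof of this proposition: it is quoted verbatim as Proposition~6.1 of \cite{Tao} and used as a black box in the bilinear estimates of Section~3, so there is nothing in the present paper to compare your argument against. That said, your sketch is correct and is essentially the argument in \cite{Tao}: reduce \eqref{eq:char} by Cauchy--Schwarz to the measure of a slice, identify that slice with a sublevel set of $h(\xi)=3\xi_1\xi_2\xi_3$, and then read off (iii) from the transversality $|\partial h|\sim N_{max}^2$, (i) from the second-derivative (van~der~Corput) bound when the first derivative degenerates on the diagonal, and (ii) by combining the two slicings --- freezing the low frequency to obtain the $\frac{N_{max}}{N_{min}}L_{med}$ branch via $|\partial h|\sim N_{min}N_{max}$, and freezing whichever high frequency leaves $L_{min}$ among the remaining modulations to obtain the $H$ branch via $|\partial h|\sim N_{max}^2$.
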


In order to estimate the denominator in \eqref{eq:bicase1},
\eqref{eq:bicase2}, we will need the following proposition to reduce
some cases.
\begin{proposition}\label{pdeno}
Let $k\in \N$. Assume that $a_1, a_2,\ldots, a_k$ and
$b_1,b_2,\ldots, b_k$ are non-negative numbers, and $A_1\leq A_2\leq
\ldots \leq A_k$, $B_1\leq B_2\leq \ldots \leq B_k$ are rearrange of
$\{a_i\}$, $\{b_i\}$ respectively. Then
\begin{equation}\label{eq:edeno}
\prod_{i=1}^k(a_i+b_i)\geq \prod_{i=1}^k(A_i+B_i).
\end{equation}
\end{proposition}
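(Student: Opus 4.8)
The plan is to recognize the right-hand side of \eqref{eq:edeno} as the value of $\prod_i(x_i+y_i)$ obtained by pairing the two families \emph{concordantly}, i.e. matching the $i$-th smallest of the $a$'s with the $i$-th smallest of the $b$'s, whereas the left-hand side is the value obtained from one particular (the originally given, index-by-index) pairing. Thus it suffices to prove the stronger statement that, over all bijective pairings of the multiset $\{a_i\}$ with the multiset $\{b_i\}$, the concordant pairing \emph{minimizes} the product of the pairwise sums. Inequality \eqref{eq:edeno} then follows immediately, since its left-hand side is the value at one admissible pairing while its right-hand side is the minimum.

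The heart of the argument is a two-term computation. Suppose $a\le a'$ and $b'\le b$ are non-negative. A direct expansion gives
\[
(a+b)(a'+b')-(a+b')(a'+b)=(a'-a)(b-b')\ge 0,
\]
so the \emph{concordant} pairing $(a+b')(a'+b)$ (smaller with smaller, larger with larger) never exceeds the discordant one $(a+b)(a'+b')$. This single inequality is the only analytic input.

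With this in hand I would run an exchange (bubble-sort) argument. Starting from the given pairing, if it is not concordant there must exist two positions carrying an \emph{inversion}: indices with $a_i<a_j$ whose matched $b$-values appear in the opposite order. Swapping those two matched $b$-values is exactly an instance of the two-term inequality above, hence does not increase the product, while it strictly decreases the number of inversions. Since the inversion count is a non-negative integer, after finitely many such swaps one reaches the concordant pairing, and the product has not increased at any step. This shows the concordant pairing is the minimizer, and hence proves \eqref{eq:edeno}.

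The main obstacle is not analytic but organizational: essentially all the content sits in the one-line two-term estimate, and the remaining task is to set up the permutation/inversion bookkeeping cleanly, verify termination, and check that ties ($a_i=a_j$ or $b_i=b_j$) cause no difficulty — equal entries may be ordered arbitrarily and give the same product, so they never obstruct the reduction. An equally clean alternative is induction on $k$: peel off the largest entry occurring among all the $a$'s and $b$'s, use the same two-term swap to argue that in any minimizer it must be matched with the largest entry of the other family, and then apply the inductive hypothesis to the remaining $k-1$ sums.
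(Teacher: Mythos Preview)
Your proposal is correct and essentially the same as the paper's: both rest entirely on the two-term swap inequality $(a+b)(a'+b')\ge(a+b')(a'+b)$ for $a\le a'$, $b'\le b$. The paper packages the iteration as a formal induction on $k$ (relabel so $a_1=A_1$, use the $k=2$ case to swap $b$'s so that $A_1$ pairs with $B_1$, then invoke the $(k-1)$-hypothesis), which is exactly your exchange/bubble-sort argument written recursively; your alternative inductive sketch peeling off the largest entry is the mirror image of this.
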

\begin{proof}
We apply an induction on $k$. The case $k=1$ is obviously. For
$k=2$, we have
\begin{eqnarray*}
(a_1+b_1)(a_2+b_2)&=&a_1a_2+b_1b_2+a_1b_2+a_2b_1\\
&\geq& A_1B_1+A_2B_2+A_1B_2+A_2B_1=(A_1+B_1)(A_2+B_2).
\end{eqnarray*}
We assume the lemma holds for all $ q\in \N,\ q\leq k-1$. Now we
prove for $k$. If $a_1=A_1$, $b_1=B_1$, then we apply induction
assumption for $k-1$ and get \eqref{eq:edeno}. Otherwise, we may
assume $a_1=A_1$, $b_2=B_1$. By induction assumption for $2$, then
$k-1$, we get
\begin{eqnarray}
\prod_{i=1}^k(a_i+b_i)&=&(a_1+b_1)(a_2+b_2)\prod_{i=3}^k(a_i+b_i)\nonumber\\
&\geq& (A_1+B_1)(a_2+b_1)\prod_{i=3}^k(a_i+b_i)\nonumber\\
&\geq&\prod_{i=1}^k(A_i+B_i),
\end{eqnarray}
which completes the proof of the proposition.
\end{proof}

\begin{proof}[Proof of Proposition \ref{p31}.]
We will prove the proposition using case-by-case analysis. We first
bound \eqref{eq:bicase2}. Since we have
\begin{equation}
N_3\jb{N_3}^s\jb{N_1}^{-s}\jb{N_2}^{-s}\les
N\jb{N_{min}}^{-s}+N^{-2s}N_{min}\jb{N_{min}}^s
\end{equation}
and from (iii) of Proposition \ref{pchar}, we obtain
\begin{eqnarray}
\eqref{eq:bicase2}&\les&\sum_{N_{max}\sim N_{med}\sim
N}\sum_{L_i,L_{max}\geq
H}\frac{(N\jb{N_{min}}^{-s}+N^{-2s}N_{min}\jb{N_{min}}^s)L_{min}^{1/2}N_{min}^{1/2}}{L_{max}^{1/2-\delta}L_{med}^{1/2-\delta}L_{min}^{1/2-\delta}}\nonumber
\\
&\les&\sum_{N_{max}\sim N_{med}\sim N}\sum_{L_{max}\geq
H}(N\jb{N_{min}}^{-s}+N^{-2s}N_{min}\jb{N_{min}}^s)
L_{max}^{-1+3\delta}N_{min}^{1/2}\nonumber\\
&\les&\sum_{N_{min}\leq N^{-2}}(N+N^{-2s}N_{min})
N_{min}^{1/2}\nonumber\\
&&+\sum_{N^{-2}\leq N_{min}\leq 1}(N+N^{-2s}N_{min})
N^{-2+6\delta}N_{min}^{-1/2+3\delta}\nonumber\\
&&+\sum_{N_{min}\geq 1}(NN_{min}^{-s}+N^{-2s}N_{min}^{1+s})
N^{-2+6\delta}N_{min}^{-1/2+3\delta}\nonumber\\
&\les&1,
\end{eqnarray}
provided that $-1<s\leq 0$.

We next bound \eqref{eq:bicase1}, which is more complicated. We
first assume that \eqref{eq:chari} applies. Then we have
\begin{eqnarray}
\eqref{eq:bicase1}&\les&\sum_{N_{max}\sim N_{min}\sim
N}\sum_{L_1,L_2,L_3\geq
1}\frac{N^{3/4-s}L_{min}^{1/2}L_{med}^{1/4}\jb{L_{min}+N^{2\alpha}}^{-1/2+\delta}}{\jb{L_{max}+N^{2\alpha}}^{1/2-\delta}\jb{L_{med}+N^{2\alpha}}^{1/2-\delta}}\nonumber\\
&\les&\sum_{N_{max}\sim N_{min}\sim N}\sum_{L_{med}}\frac{N^{3/4-s}L_{med}^{1/4+\delta}}{N^{3/2-3\delta}\jb{L_{med}+N^{2\alpha}}^{1/2-\delta}}\nonumber\\
&\les&N^{-\frac{3}{4}-\half \alpha-s+4\delta}\les 1,
\end{eqnarray}
provided that $-\frac{3}{4}-\half \alpha<s\leq 0$.

If \eqref{eq:chariii} applies, from Proposition \ref{pdeno}, we
obtain
\begin{eqnarray}
\eqref{eq:bicase1}&\les&\sum_{N_i}\sum_{L_i}\frac{(N\jb{N_{min}}^{-s}+N^{-2s}N_{min}\jb{N_{min}}^s)L_{min}^{1/2}N^{-1}L_{med}^{1/2}}{(L_{max}+N^{2\alpha})^{1/2-\delta}\jb{L_{med}+N^{2\alpha}}^{1/2-\delta}\jb{L_{min}+N_{min}^{2\alpha}}^{1/2-\delta}}\nonumber\\
&\les&\sum_{N_i}\frac{(N\jb{N_{min}}^{-s}+N^{-2s}N_{min}\jb{N_{min}}^s)N^{-1+4\alpha\delta}}{(N^2N_{min}+N^{2\alpha})^{1/2-3\delta}}\nonumber\\
&\les&\sum_{N_{min}\leq N^{2\alpha-2}}\frac{(N+N^{-2s}N_{min})N^{-1+4\alpha\delta}}{N^{\alpha-6\delta}}\nonumber\\
&&+\sum_{N^{2\alpha-2}\leq N_{min}\leq 1}\frac{(N+N^{-2s}N_{min})N^{-1+4\alpha\delta}}{N^{1-6\delta}N_{min}^{1/2-3\delta}}\nonumber\\
&&+\sum_{N_{min}\geq
1}\frac{(NN_{min}^{-s}+N^{-2s}N_{min}^{1+s})N^{-1+4\alpha\delta}}{N^{1-6\delta}N_{min}^{1/2-3\delta}}\nonumber\\
&\les&N^{-\alpha+10\delta}+N^{-2s-3+\alpha+6\delta}+N^{-2s-2+6\delta}+N^{-s-3/2+7\delta}\nonumber\\
&\les&1,
\end{eqnarray}
provided that $-1<s\leq 0$.

If \eqref{eq:charii} applies, we have three cases:
\begin{eqnarray}
&&N_2\sim N_3\gg N_1,\quad L_1\ges L_2, L_3,\label{eq:bicase121}\\
&&N_1\sim N_3\gg N_2,\quad L_2\ges L_1, L_3,\label{eq:bicase122}\\
&&N_1\sim N_2\gg N_3,\quad L_3\ges L_1, L_2.\label{eq:bicase123}
\end{eqnarray}
If \eqref{eq:bicase121} holds, then we have
\begin{eqnarray}
\eqref{eq:bicase1}&\les&\sum_{N_i}\sum_{L_i}\frac{N\jb{N_{min}}^{-s}L_{min}^{1/2}N^{-1}\min(H,\frac{N_{max}}{N_{min}}L_{med})^{1/2}}{N_{min}^{1/2}N\jb{L_{med}+N^{2\alpha}}^{1/2-\delta}\jb{L_{min}+N^{2\alpha}}^{1/2}}\nonumber\\
&\les&\sum_{N_i}\sum_{L_{med}\geq
NN_{min}^2}\frac{N\jb{N_{min}}^{-s}\log(L_{med})N^{-1}N_{min}^{1/2}N}{N_{min}^{1/2}N\jb{L_{med}+N^{2\alpha}}^{1/2-\delta}}\nonumber\\
&&+\sum_{N_i}\sum_{L_{med}\leq
NN_{min}^2}\frac{N\jb{N_{min}}^{-s}\log(L_{med})L_{med}^{1/2}N^{-1}N_{min}^{-1/2}N^{1/2}}{N_{min}^{1/2}N\jb{L_{med}+N^{2\alpha}}^{1/2-\delta}}\nonumber\\
&=&A_1+A_2.
\end{eqnarray}
We first bound $A_1$.
\begin{eqnarray}
A_1&\les&\sum_{N^{-2}\leq N_{min}\leq 1}\sum_{L_{med}\geq
NN_{min}^2}\frac{L_{med}^\delta}{\jb{L_{med}+N^{2\alpha}}^{1/2-\delta}}\nonumber\\
&&+\sum_{N_{min}\geq 1}\sum_{L_{med}\geq NN_{min}^2}\frac{L_{med}^\delta N_{min}^{-s}}{\jb{L_{med}}^{1/2-\delta}}\nonumber\\
&\les& N^{-\alpha+7\delta}+\sum_{N_{min}\geq
1}N_{min}^{-s-1+4\delta}N^{-1/2+2\delta}\les 1,
\end{eqnarray}
provided $-1<s\leq 0$.

For $A_2$, we have
\begin{eqnarray}
A_2&\les&\sum_{N^{-1/2}\leq N_{min}\leq 1}\sum_{L_{med}\leq
NN_{min}^2}\frac{L_{med}^{\delta+1/2}N_{min}^{-1}N^{-1/2}}{\jb{L_{med}+N^{2\alpha}}^{1/2-\delta}}\nonumber\\
&&+\sum_{N_{min}\geq 1}\sum_{L_{med}\leq
NN_{min}^2}\frac{L_{med}^{\delta+1/2}N_{min}^{-1-s}N^{-1/2}}{\jb{L_{med}+N^{2\alpha}}^{1/2-\delta}}\nonumber\\
&\les&\sum_{N^{-1/2}\leq N_{min}\leq
1}N^{2\delta-1/2}N_{min}^{4\delta-1}+\sum_{N_{min}\geq
1}N_{min}^{-1-s+4\delta}N^{-1/2+2\delta}\nonumber\\
&\les&1,
\end{eqnarray}
provided $-1<s\leq 0$.

 From symmetry, the case
\eqref{eq:bicase121} is identical to the case \eqref{eq:bicase122}.
Now we assume that  \eqref{eq:bicase123} holds, and we obtain
\begin{eqnarray}
\eqref{eq:bicase1}&\les&\sum_{N_i}\sum_{L_i}\frac{N^{-2s}\jb{N_{min}}^{s}N_{min}L_{min}^{1/2}N^{-1}\min(H,\frac{N_{max}}{N_{min}}L_{med})^{1/2}}{N_{min}^{1/2-\delta}N^{1-2\delta}\jb{L_{med}+N^{2\alpha}}^{1/2-\delta}\jb{L_{min}+N^{2\alpha}}^{1/2}}\nonumber\\
&\les&\sum_{N_i}\sum_{L_{med}\geq
NN_{min}^2}\frac{N^{-2s}\jb{N_{min}}^{s}N_{min}\log(L_{med})N^{-1}N_{min}^{1/2}N}{N_{min}^{1/2-\delta}N^{1-2\delta}\jb{L_{med}+N^{2\alpha}}^{1/2-\delta}}\nonumber\\
&&+\sum_{N_i}\sum_{L_{med}\leq
NN_{min}^2}\frac{N^{-2s}\jb{N_{min}}^{s}N_{min}\log(L_{med})L_{med}^{1/2}N^{-1}N_{min}^{-1/2}N^{1/2}}{N_{min}^{1/2-\delta}N^{1-2\delta}\jb{L_{med}+N^{2\alpha}}^{1/2}}\nonumber\\
&=&B_1+B_2.
\end{eqnarray}
We first bound $B_1$.
\begin{eqnarray}
B_1&\les&\sum_{N^{-2}\leq N_{min}\leq 1}\sum_{L_{med}\geq
NN_{min}^2}\frac{N^{-2s-1+2\delta}N_{min}^{1+\delta}L_{med}^\delta}{\jb{L_{med}+N^{2\alpha}}^{1/2-\delta}}\nonumber\\
&&+\sum_{N_{min}\geq 1}\sum_{L_{med}\geq
NN_{min}^2}\frac{N^{-2s-1+2\delta}N_{min}^{1+\delta+s}L_{med}^\delta}{\jb{L_{med}+N^{2\alpha}}^{1/2-\delta}}\nonumber\\
&\les&\sum_{N^{-2}\leq N_{min}\leq 1}\frac{N^{-2s-1+2\delta}N_{min}^{1+\delta}}{\jb{NN_{min}^2+N^{2\alpha}}^{1/2-2\delta}}\nonumber\\
&&+\sum_{N_{min}\geq
1}\frac{N^{-2s-1+2\delta}N_{min}^{1+\delta+s}}{\jb{NN_{min}^2+N^{2\alpha}}^{1/2-2\delta}}.
\end{eqnarray}
We discuss it in the following two cases. If $1/2\leq \alpha\leq 1$,
then
\begin{eqnarray}
B_1&\les&N^{-2s-1-\alpha+6\delta} +\sum_{N_{min}\geq
N^{\alpha-1/2}}N^{-2s-3/2+4\delta}N_{min}^{5\delta+s}\nonumber\\
&&+\sum_{1\leq N_{min}\leq
N^{\alpha-1/2}}N^{-2s-1-\alpha+6\delta}N_{min}^{1+\delta+s},
\end{eqnarray}
provided that $-\frac{3}{5-2\alpha}<s\leq 0$. If $0<\alpha \leq
1/2$, then
\begin{eqnarray}
B_1&\les&\sum_{N^{\alpha-1/2}\leq N_{min}\leq
1}N^{-2s-3/2+4\delta}N_{min}^{5\delta}+\sum_{N_{min}\geq
1}N^{-2s-3/2+4\delta}N_{min}^{5\delta+s}\nonumber\\
&&+\sum_{N^{-2}\leq N_{min}\leq
N^{\alpha-1/2}}N^{-2s-1-\alpha+6\delta}N_{min}^{1+\delta}\nonumber\\
&\les&1,
\end{eqnarray}
provided that $-3/4<s\leq 0$.

For $B_2$, we have
\begin{eqnarray}
B_2&\les&\sum_{N^{-1/2}\leq N_{min}\leq 1}\sum_{L_{med}\leq
NN_{min}^2}\frac{N^{-2s-3/2+2\delta}N_{min}^\delta L_{med}^{1/2+\delta}}{\jb{L_{med}+N^{2\alpha}}^{1/2}}\nonumber\\
&&+\sum_{N_{min}\geq 1}\sum_{L_{med}\leq
NN_{min}^2}\frac{N^{-2s-3/2+2\delta}N_{min}^{\delta+s}
L_{med}^{1/2+\delta}}{\jb{L_{med}+N^{2\alpha}}^{1/2}}\nonumber.
\end{eqnarray}
and get
\begin{eqnarray}
B_2&\les&\sum_{N^{-1/2}\leq N_{min}\leq
1}\frac{N^{-2s-1+3\delta}N_{min}^{1+3\delta}}{\jb{NN_{min}^2+N^{2\alpha}}^{1/2}}+\sum_{N_{min}\geq
1}\frac{N^{-2s-1+3\delta}N_{min}^{1+s+3\delta}}{\jb{NN_{min}^2+N^{2\alpha}}^{1/2}}\nonumber.
\end{eqnarray}
If $1/2\leq \alpha \leq 1$, then
\begin{eqnarray}
B_2&\les&N^{-2s-1-\alpha+3\delta}+\sum_{N_{min}\geq
N^{\alpha-1/2}}N^{-2s-3/2+3\delta}N_{min}^{s+3\delta}\nonumber\\
&&+\sum_{1\leq N_{min}\leq
N^{\alpha-1/2}}N^{-2s-1-\alpha+3\delta}N_{min}^{1+s+3\delta}\nonumber\\
&\les&1,
\end{eqnarray}
provided that $-\frac{3}{5-2\alpha}<s\leq 0$. If $0< \alpha \leq
1/2$, then
\begin{eqnarray}
B_2&\les&\sum_{N^{-1/2}\leq N_{min}\leq
N^{\alpha-1/2}}N^{-2s-1-\alpha+3\delta}N_{min}^{1+3\delta}\nonumber\\
&&+\sum_{N^{\alpha-1/2}\leq N_{min}\leq
1}N^{-2s-3/2+3\delta}N_{min}^{3\delta}+\sum_{N_{min}\geq
1}N^{-2s-3/2+3\delta}N_{min}^{s+3\delta}\nonumber \\
&\les&1,
\end{eqnarray}
provided that $-3/4<s\leq 0$. Therefore, we complete the proof of
Proposition \ref{p31}.
\end{proof}

\begin{proposition}
If $s\leq s_\alpha$, then for any $0<\delta\ll 1$, there doesn't
exist $C>0$ such that for any $u, v \in \Sch$,
\begin{equation}\label{eq:bikdvb}
\norm{\partial_x(uv)}_{X^{-1/2+\delta, s, \alpha}}\leq
C\norm{u}_{X^{1/2, s, \alpha}}\norm{v}_{X^{1/2, s, \alpha}}.
\end{equation}
\end{proposition}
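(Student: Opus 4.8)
The plan is to \emph{disprove} \eqref{eq:bikdvb} by exhibiting, for a sequence $N\to\infty$, explicit inputs $u=u_N,v=v_N\in\Sch$ for which the ratio of the two sides is unbounded whenever $s\le s_\alpha$. By the same duality and Plancherel reduction used just before Proposition \ref{p31}, this is equivalent to showing that the $[3;\R^2]$-multiplier in \eqref{eq:bi} has infinite norm; and by testing the multiplier against indicators of single frequency--modulation blocks, it suffices to produce one dyadic configuration $(N_1,N_2,N_3;H;L_1,L_2,L_3)$, depending on $N$, for which
\[
\frac{N_3\jb{N_3}^s\jb{N_1}^{-s}\jb{N_2}^{-s}}{\jb{L_1+N_1^{2\alpha}}^{1/2}\jb{L_2+N_2^{2\alpha}}^{1/2}\jb{L_3+N_3^{2\alpha}}^{1/2-\delta}}\,\norm{\chi_{N_1,N_2,N_3;H;L_1,L_2,L_3}}_{[3;\R^2]}\longrightarrow\infty .
\]
Indeed, if $\ft u=\chi_{R_1}$ and $\ft v=\chi_{R_2}$ are indicators of boxes realizing these localizations, then $\norm{u}_{X^{1/2,s,\alpha}}$ and $\norm{v}_{X^{1/2,s,\alpha}}$ equal the corresponding dyadic weights times $|R_i|^{1/2}$, while restricting $\partial_x(uv)$ to the output block bounds $\norm{\partial_x(uv)}_{X^{-1/2+\delta,s,\alpha}}$ below by the multiplier value times the normalized overlap $|V|/(|R_1||R_2||R_3|)^{1/2}$ of the three boxes; the latter is exactly what lower-bounds the $[3;\R^2]$-norm.

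The configuration I would choose is the high--high--to--low interaction \eqref{eq:bicase123} that was borderline in the proof of Proposition \ref{p31}. Set
\[
N_1\sim N_2\sim N,\qquad N_3\sim N^{(\alpha-1/2)_+},\qquad H\sim N_1N_2N_3\sim N^2N_3,
\]
and on the modulation side $L_3=L_{max}\sim H$, together with $L_{med}\sim\tfrac{N_3}{N}H\sim NN_3^2$ and $L_{min}\sim N^{2\alpha}$, so that the dissipative floor $N^{2\alpha}$ saturates the two high-frequency input weights. This is precisely the endpoint choice seen in the estimates of $B_1$ and $B_2$: $L_{med}$ sits on the boundary $NN_{min}^2$ where the two sub-regimes meet, and for $\alpha>1/2$ the lowered output frequency $N_3\sim N^{\alpha-1/2}$ is forced because the dissipation raises the high-frequency modulation weights up to $N^{\alpha}$.

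With these values, the box computation underlying Proposition \ref{pchar}(ii) gives $\norm{\chi_{N_1,N_2,N_3;H;L_1,L_2,L_3}}_{[3;\R^2]}\sim N^{\alpha}N_3^{1/2}$, realized up to constants by indicators of suitably thinned boxes, and a direct evaluation of the multiplier yields
\[
\frac{N_3\jb{N_3}^s\jb{N_1}^{-s}\jb{N_2}^{-s}}{\jb{L_1+N_1^{2\alpha}}^{1/2}\jb{L_2+N_2^{2\alpha}}^{1/2}\jb{L_3+N_3^{2\alpha}}^{1/2-\delta}}\,\norm{\chi_{N_1,N_2,N_3;H;L_1,L_2,L_3}}_{[3;\R^2]}\sim N^{-2s-3/2+2\delta}\,N_3^{\,s+\delta}.
\]
Substituting $N_3\sim1$ for $0<\alpha\le1/2$ gives $N^{-2s-3/2+2\delta}$, and $N_3\sim N^{\alpha-1/2}$ for $1/2<\alpha\le1$ gives $N^{\,s(\alpha-5/2)-3/2+(3/2+\alpha)\delta}$. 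In both cases the exponent equals $0$ at $s=s_\alpha$ up to the strictly positive term $c\delta$ (with $c=2$, resp.\ $c=3/2+\alpha$), and is strictly positive for $s<s_\alpha$; hence the ratio tends to $\infty$ as $N\to\infty$ for every fixed $0<\delta\ll1$ and every $s\le s_\alpha$, contradicting \eqref{eq:bikdvb}.

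The main obstacle will be realizing the sharp $[3;\R^2]$-norm with genuine inputs, i.e.\ arranging $R_1,R_2$ so that a positive fraction of the convolution $\ft u*\ft v$ actually lands in the output block $R_3$. This dictates the box dimensions: the $\xi$-widths must be small enough (of order $N_3$, hence of order $1$ when $\alpha\le1/2$) that the resonance function $h(\xi)=\xi_1^3+\xi_2^3+\xi_3^3$ varies by at most $\sim L_3$ across $R_1\times R_2$, so that the induced modulation $\tau_3-\xi_3^3=-(\tau_1-\xi_1^3)-(\tau_2-\xi_2^3)-h$ stays inside a single block of size $L_3$; the $\tau$-widths are then of order $L_1,L_2$. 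Verifying that this one choice simultaneously saturates Proposition \ref{pchar}(ii) and keeps each $\jb{L_i+N_i^{2\alpha}}$ at the claimed size is the only quantitative point requiring care, everything else being the bookkeeping of powers of $N$ recorded above.
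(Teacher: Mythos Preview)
Your plan is essentially the paper's own argument, recast in the $[3;\R^2]$-multiplier language: both pick the high--high$\to$low configuration $N_1\sim N_2\sim N$, $N_3\sim N^{(\alpha-1/2)_+}$, $L_3\sim H$, and arrive at exactly the same threshold exponent. What you call ``the main obstacle''---realizing the sharp lower bound on $\norm{\chi_{N_1,N_2,N_3;H;L_1,L_2,L_3}}_{[3;\R^2]}$ with genuine inputs---is precisely the content of the paper's proof: it writes down the explicit Kenig--Ponce--Vega boxes ($A=\{N\le\xi\le N+1,\ |\tau-\xi^3|\sim N\}$ for $\alpha\le 1/2$, and $B=\{N\le\xi\le N+N^{\alpha-1/2},\ |\tau-\xi^3|\sim N^{2\alpha}\}$ for $\alpha>1/2$), sets $\widehat u=\widehat v=\chi_A+\chi_{-A}$ (resp.\ $B$), and computes the convolution directly to land on the output rectangle of the claimed size. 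One minor deviation: for $\alpha\le 1/2$ the paper takes the symmetric choice $L_1\sim L_2\sim N$ rather than your asymmetric $L_{min}\sim N^{2\alpha}$, $L_{med}\sim N$; both yield the same exponent $-2s-3/2+2\delta$, but the symmetric choice makes the convolution lower bound immediate since $u=v$ and no thinning of boxes is needed.
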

\begin{proof}
From the proof of the Proposition \ref{p31}, we see that the
restriction on $s$ is caused by high-high interaction, and hence we
construct the worst case. The idea is due to C. Kenig, G. Ponce and
L. Vega \cite{KPV}. In view of definition, \eqref{eq:bikdvb} is
equivalent to
\begin{eqnarray}\label{eq:count}
&& \norm{\frac{\xi(1+|\xi|)^s}{(1+|\xi|^{2\alpha}+|\tau-\xi^3|)^{1/2-\delta}}\nonumber\\
&&\times \int
\frac{f(\xi_1,\tau_1)(1+|\xi_1|)^{-s}f(\xi-\xi_1,\tau-\tau_1)(1+|\xi-\xi_1|)^{-s}d\xi_1
d\tau_1}{\jb{|\xi_1|^{2\alpha}+|\tau_1-\xi_1^3|}^{1/2}\jb{|\xi-\xi_1|^{2\alpha}+|\tau-\tau_1-(\xi-\xi_1)^3|}^{1/2}}}_{L_{\xi,\tau}^2}\nonumber\\
&\les& \norm{f}_{L_{\xi,\tau}^2}^2.
\end{eqnarray}
If $0<\alpha\leq 1/2$, fix $N\gg 1$, we set
\begin{eqnarray*}
f(\xi,\tau)=\chi_A(\xi,\tau)+\chi_{-A}(\xi,\tau),
\end{eqnarray*}
where
\begin{eqnarray*}
A=\{(\xi,\tau)\in \R^2|N\leq \xi \leq N+1, N\leq |\tau-\xi^3|\leq
2N\},
\end{eqnarray*}
and
\begin{eqnarray*}
-A=\{(\xi,\tau)\in \R^2| -(\xi,\tau)\in A\}.
\end{eqnarray*}
Clearly,
\begin{equation}
\norm{f}_{L_{\xi,\tau}^2}\sim N^{1/2}.
\end{equation}
On the other hand, $A$ contains a rectangle with $(N, N^3+N)$ as a
vertex, with dimension $N^{-1}\times N^2$ and longest side pointing
in the $(1, 3N^2)$ direction. Therefore,
\begin{equation}
|f*f(\xi,\tau)|\ges N\chi_{R}(\xi,\tau),
\end{equation}
where R is a rectangle centered at the origin of dimensions
$N^{-1}\times N^2$ and longest side pointing in the $(1, 3N^2)$
direction. Taking the one-third rectangle away from origin, then we
have $|\xi|\sim 1$, and therefore \eqref{eq:count} implies that
\begin{equation}
N^{-1+2\delta}N^{-2s}N^{-1}NN^{-1/2}N\les N,
\end{equation}
which implies that $s> -3/4$.

If $1/2\leq \alpha \leq 1$, then take
\begin{eqnarray*}
f(\xi,\tau)=\chi_B(\xi,\tau)+\chi_{-B}(\xi,\tau),
\end{eqnarray*}
where
\begin{equation}
B=\{(\xi,\tau)\in \R^2|N\leq \xi \leq N+N^{\alpha-1/2},
N^{2\alpha}\leq |\tau-\xi^3|\leq 2N^{2\alpha}\},
\end{equation}
and
\begin{eqnarray*}
-B=\{(\xi,\tau)\in \R^2| -(\xi,\tau)\in B\}.
\end{eqnarray*}
Clearly,
\begin{equation}
\norm{f}_{L_{\xi,\tau}^2}\sim N^{\half {3\alpha}-\frac{1}{4}}.
\end{equation}
On the other hand, $B$ contains a rectangle with $(N,
N^3+N^{2\alpha})$ as a vertex, with dimension $N^{2\alpha-2}\times
N^{\alpha+3/2}$ and longest side pointing in the $(1, 3N^2)$
direction. Therefore,
\begin{equation}
|f*f(\xi,\tau)|\ges N^{3\alpha-1/2}\chi_{R}(\xi,\tau),
\end{equation}
where R is a rectangle centered at the origin of dimensions
$N^{2\alpha-2}\times N^{\alpha+3/2}$ and longest side pointing in
the $(1, 3N^2)$ direction. Taking the one-third rectangle away from
origin, then we have $|\xi|\sim N^{\alpha-1/2}$, and therefore
\eqref{eq:count} implies that
\begin{equation}
N^{(\alpha-1/2)(1+s)}N^{(\alpha+3/2)(-1/2+\delta)}N^{-2s}N^{-2\alpha}N^{3\alpha-1/2}N^{\alpha-1}N^{\alpha/2+3/4}\les
 N^{3\alpha-1/2},
\end{equation}
which implies that $s> -3/(5-2\alpha)$.
\end{proof}

\begin{remark}\label{r35} \rm
The constant in Proposition \ref{p31} depends on $\alpha$, which is
the main reason for gaining $\delta$-order derivative in time in the
bilinear estimates. In proving global well-posedness we also need to
exploit the smoothing effect of the dissipative term and then $L^2$
conservation law. Therefore, the result of Theorem \ref{t11} is
dependent of $\epsilon$.
\end{remark}

\section{Uniform LWP for KdV-B equation}
In this section we study the uniform local well posedness for the
KdV-Burgers equation. We will prove a time localized version of
Theorem \ref{t12} where $T=T(\norm{\phi}_{H^{s}})$ is small. In view
of Remark \ref{r35}, the space $X^{b,s}$ we used in the last section
is not proper in this situation. We will use the space $F^s$. Let us
recall that \eqref{eq:kdvb} is invariant in the following scaling
\begin{equation}\label{eq:scaling}
u(x,t)\rightarrow \lambda^2 u(\lambda x, \lambda^3 t),\
\phi(x)\rightarrow \lambda^2\phi(\lambda x), \ \epsilon\rightarrow
\lambda^{3-2\alpha}\epsilon, \ \ \forall \  0<\lambda\leq 1.
\end{equation}
This invariance is very important in the proof of Theorem \ref{t12}
and also crucial for the uniform global-well posedness in the next
section. We first show that $F^s(T)\hookrightarrow C([0, T], H^{s})$
for $s\in \R$, $T\in (0, 1]$ in the following proposition.
\begin{proposition}\label{p41}
If $s\in \R$, $T\in (0, 1]$, and $u\in F^s(T)$, then
\begin{equation}
\sup_{t\in [0, T]}\norm{u(t)}_{H^s}\les \norm{u}_{F^s(T)}.
\end{equation}
\end{proposition}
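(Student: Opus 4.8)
The plan is to reduce this time-localized embedding to a global-in-time one and then quote Proposition \ref{p21}. First I would unwind the restriction norm: given $u\in F^s(T)$ and $\varepsilon>0$, choose an extension $w\in F^s$ with $w(t)=u(t)$ for $t\in[0,T]$ and $\norm{w}_{F^s}\le \norm{u}_{F^s(T)}+\varepsilon$. Then $\sup_{t\in[0,T]}\norm{u(t)}_{H^s}=\sup_{t\in[0,T]}\norm{w(t)}_{H^s}\le \sup_{t\in\R}\norm{w(t)}_{H^s}$, so it suffices to establish the global bound $\sup_{t\in\R}\norm{w(t)}_{H^s}\lesssim \norm{w}_{F^s}$ and send $\varepsilon\to 0$.

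The global bound is, up to one elementary observation, precisely Proposition \ref{p21} with $Y=L^\infty(\R;H^s)$. I would first verify the hypothesis on this $Y$. Since $\ft_x\big(e^{it\tau_0}e^{-t\partial_x^3}f\big)(\xi)=e^{it\tau_0}e^{it\xi^3}\widehat{f}(\xi)$ has modulus $|\widehat f(\xi)|$ for every fixed $t$, both the scalar phase $e^{it\tau_0}$ and the Airy group are Fourier multipliers of modulus one; hence $\norm{e^{it\tau_0}e^{-t\partial_x^3}f}_{L^\infty_tH^s}=\norm{f}_{H^s}$, and the required inequality holds with constant one. Proposition \ref{p21} then gives $\sum_{k\in\Z_+}\norm{P_kw}_{L^\infty_tH^s}^2\lesssim \norm{w}_{F^s}^2$.

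It remains to pass from this square-summed dyadic control to the actual $L^\infty_tH^s$ norm. Using the square-function characterization $\norm{w(t)}_{H^s}^2\sim \sum_{k}\norm{P_kw(t)}_{H^s}^2$ pointwise in $t$, together with the trivial inequality $\sup_t\sum_k\le\sum_k\sup_t$, I obtain
\[
\sup_{t\in\R}\norm{w(t)}_{H^s}^2\lesssim \sum_{k\in\Z_+}\sup_{t\in\R}\norm{P_kw(t)}_{H^s}^2=\sum_{k\in\Z_+}\norm{P_kw}_{L^\infty_tH^s}^2\lesssim \norm{w}_{F^s}^2,
\]
which together with the first two paragraphs completes the argument.

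The only conceptual point — and the place I would expect to have to be slightly careful — is the interchange $\sup_t\sum_k\le\sum_k\sup_t$, which is exactly what makes the $\ell^2$-over-dyadic-blocks structure of $F^s$ compatible with an $L^\infty$-in-time estimate. There is no genuine obstacle, since the analytic heart of the matter, namely the gain from the $2^{j/2}$ weight in $\norm{\cdot}_{X_k}$ that controls the passage $L^1_\tau\hookrightarrow L^\infty_t$ through Cauchy--Schwarz on the modulation support (of measure $\sim 2^j$), is already packaged inside Proposition \ref{p21}. Should a self-contained proof be preferred, the same dyadic estimate follows directly by writing $\ft_x(P_kw)(\xi,t)=\sum_{j}\int \eta_j(\tau-\xi^3)\eta_k(\xi)\ft(w)(\xi,\tau)\,e^{it\tau}\,d\tau$, bounding the $j$-th piece in $L^2_\xi$ uniformly in $t$ by $2^{j/2}\norm{\eta_j(\tau-\xi^3)\eta_k\,\ft(w)}_{L^2}$ via Cauchy--Schwarz in $\tau$, and summing in $j$.
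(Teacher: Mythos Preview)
Your proposal is correct. The paper's own proof is exactly the ``self-contained'' alternative you sketch in the final paragraph: it reduces to the dyadic bound $\norm{\eta_k(\xi)\ft_x u(t)}_{L^2}\lesssim \norm{\eta_k(\xi)\ft(u)}_{X_k}$, writes $\eta_k(\xi)\ft_x u(t)=\sum_j\int \eta_j(\tau-\xi^3)\eta_k(\xi)\ft(u)(\xi,\tau)e^{it\tau}\,d\tau$, and applies Minkowski plus Cauchy--Schwarz in $\tau$. Your primary route through Proposition~\ref{p21} with $Y=L^\infty_tH^s$ is just a repackaging of the same computation, with the extra (harmless) step of swapping $\sup_t$ and $\sum_k$.
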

\begin{proof}
In view of definition, it suffices to show that for $k\in \Z_+$,
$t\in [0, 1]$,
\begin{equation}\label{eq:s42}
\norm{\eta_k(\xi)\ft_x{u}(t)}_{L^2}\les
\norm{\eta_k(\xi)\ft{u}}_{X_k}.
\end{equation}
From the fact
\[
\eta_k(\xi)\ft_x{u}(t)= \sum_{j\in \mathbb{Z}_+}\int_\R
\eta_j(\tau-\xi^3)\eta_k(\xi)\ft(u)(\tau)e^{it\tau}d\tau,
\]
we easily see that \eqref{eq:s42} follows from the Minkowski's
inequality, Cauchy-Schwarz inequality and the definition of $X_k$.
\end{proof}

We prove an embedding property of the space $N^s$ in the next
proposition which can be viewed as a dual version of Proposition
\ref{p41}. This property is important in proving the limit behavior
in Section 6.
\begin{proposition}\label{p42}
If $s\in \R$ and $u\in L_t^2H_x^s$, then
\begin{equation}
\norm{u}_{N^s}\les \norm{u}_{L_t^2H_x^s}.
\end{equation}
\begin{proof}
We may assume $s=0$. By definition it suffices to prove that for
$k\in \Z_+$,
\begin{equation}
\norm{(i+\tau-\xi^3)^{-1}\eta_k(\xi)\ft(u)}_{X_k}\les
\norm{\eta_k(\xi)\ft(u)}_{L^2},
\end{equation}
which immediately follows from the definition of $X_k$.
\end{proof}
\end{proposition}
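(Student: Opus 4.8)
The plan is to peel off the two structural features of the $N^s$ norm — the $H^s$ weight and the $\ell^2$ sum over dyadic frequency blocks — and reduce everything to a single scalar estimate governed by the decay of the multiplier $(i+\tau-\xi^3)^{-1}$. First I would reduce to $s=0$. Since the factor $2^{2sk}$ in the definition of $N^s$ sits on the block $|\xi|\sim 2^k$, on which $\jb{\xi}^s\sim 2^{sk}$, and since $\norm{u}_{L_t^2H_x^s}^2=\int\jb{\xi}^{2s}|\ft u|^2$ by Plancherel in both variables, replacing $\ft u$ by $\jb{\xi}^s\ft u$ turns the weighted statement into the unweighted one. Next, because the $N^0$ norm is by definition an $\ell^2_k$ sum and $\norm{u}_{L_t^2L_x^2}^2\sim\sum_k\norm{\eta_k(\xi)\ft u}_{L^2}^2$ by almost orthogonality of the Littlewood--Paley pieces, it suffices to prove the single-block bound
$$\norm{(i+\tau-\xi^3)^{-1}f_k}_{X_k}\les\norm{f_k}_{L^2},\qquad f_k:=\eta_k(\xi)\ft u,$$
which is precisely the reduction flagged in the statement.

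For this single-block bound I would compute directly from the definition of $X_k$, writing
$$\norm{(i+\tau-\xi^3)^{-1}f_k}_{X_k}=\sum_{j=0}^\infty 2^{j/2}\norm{\eta_j(\tau-\xi^3)(i+\tau-\xi^3)^{-1}f_k}_{L^2}.$$
The key observation is that on the support of $\eta_j(\tau-\xi^3)$ one has $|i+\tau-\xi^3|=\jb{\tau-\xi^3}\sim 2^j$, so the multiplier obeys the pointwise bound $|(i+\tau-\xi^3)^{-1}|\les 2^{-j}$ there (including $j=0$, where $\jb{\tau-\xi^3}\geq 1$). Hence the $j$-th term is controlled by $2^{j/2}\cdot 2^{-j}\norm{\eta_j(\tau-\xi^3)f_k}_{L^2}=2^{-j/2}\norm{\eta_j(\tau-\xi^3)f_k}_{L^2}$, and Cauchy--Schwarz in $j$ against the summable sequence $2^{-j/2}$, followed by the finite overlap of the $\eta_j(\tau-\xi^3)$ in the variable $\tau-\xi^3$, gives
$$\sum_{j=0}^\infty 2^{-j/2}\norm{\eta_j(\tau-\xi^3)f_k}_{L^2}\les\left(\sum_{j=0}^\infty\norm{\eta_j(\tau-\xi^3)f_k}_{L^2}^2\right)^{1/2}\les\norm{f_k}_{L^2}.$$

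Undoing the two reductions then yields the proposition. I expect no serious obstacle here; the one point deserving care — and what I regard as the heart of the matter — is the numerology in the middle of the argument. The definition of $X_k$ supplies the amplifying weight $2^{j/2}$, while $(i+\tau-\xi^3)^{-1}$ supplies the stronger decay $2^{-j}$; their product is the summable $2^{-j/2}$, and it is exactly this half-power gain that lets the $\ell^1_j$ sum defining $X_k$ collapse to the $\ell^2_j$ (equivalently $L^2$) quantity. Everything else is routine Littlewood--Paley bookkeeping.
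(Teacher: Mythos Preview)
Your proposal is correct and follows exactly the same route as the paper's proof: reduce to $s=0$, reduce to a single dyadic block, and then read off the bound from the definition of $X_k$. The paper simply declares the last step ``immediately follows from the definition of $X_k$'' without writing out the $2^{j/2}\cdot 2^{-j}=2^{-j/2}$ numerology and the Cauchy--Schwarz collapse that you spelled out; your version is just the explicit computation behind that one-line claim.
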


As in the last section we will mainly work on the correspondng
integral equation of eq. \eqref{eq:kdvb}. But for technical reason
we will mainly work on the following integral equation
\begin{equation}\label{eq:trunintekdvbuni}
u(t)=\psi(t)\left[W_\epsilon^\alpha(t)\phi_1-L \big(\partial_x
(\psi^2 u^2)\big)(x,t) \right],
\end{equation}
where $\psi$ is as in \eqref{eq:cutoff} and
\begin{eqnarray} \label{defL}
L(f)(x,t)=W_0(t)\int_{\R^2}e^{ix\xi}\frac{e^{it\tau'}-e^{-\epsilon
|t||\xi|^{2\alpha}}}{i\tau'+\epsilon
|\xi|^{2\alpha}}\ft(W_0(-t)f)(\xi,\tau')d\xi d\tau'.
\end{eqnarray}
One easily sees that
\begin{eqnarray}
\chi_{\R_+}(t) \psi(t)L(f)(x,t)&=&\chi_{\R_+}(t)\psi(t)\int_0^t
W_\epsilon^\alpha(t-\tau) f(\tau)d\tau. \label{eq:nonhomo}
\end{eqnarray}
Indeed, taking $w=W_0(\cdot)f$,  the right hand side of
\eqref{eq:nonhomo} can be rewritten as
\begin{eqnarray*}
&& W_0(t)\left[\chi_{\R_+}(t)\psi(t)\int_{\R^2}
e^{ix\xi}e^{-\epsilon t |\xi|^{2\alpha}}\widehat{w}(\xi,
\tau')\int_{0}^t e^{i\tau\tau'}e^{e^{\epsilon \tau
|\xi|^{2\alpha}}}d\tau d\xi
d\tau'\right]\\
&& =W_0(t)\left[\chi_{\R_+}(t)\psi(t)\int_{\R^2}
e^{ix\xi}\frac{e^{it\tau'}-e^{-\epsilon t
|\xi|^{2\alpha}}}{i\tau'+\epsilon|\xi|^{2\alpha}}\widehat{w}(\xi,
\tau') d\xi d\tau'\right].
\end{eqnarray*}
Thus, if $u$ solves \eqref{eq:trunintekdvbuni} then $u$ is a
solution of \eqref{eq:intekdvb} on $[0, 1]$. We first prove a
uniform estimate for the free solution.

\begin{proposition}\label{p43}
Let $s\in \R$. There exists $C>0$ such that for any $0\leq
\epsilon\leq 1$
\begin{equation}
\norm{\psi(t)W_\epsilon^\alpha(t)\phi}_{F^s}\leq C\norm{\phi}_{H^s},
\quad \forall \ \phi \in H^s(\R).
\end{equation}
\end{proposition}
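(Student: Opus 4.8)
The plan is to reduce everything to a single frequency block and then to a one-dimensional estimate on the time profile of the dissipated free wave. Since $\norm{u}_{F^s}^2=\sum_{k\in\Z_+}2^{2sk}\norm{\eta_k(\xi)\ft(u)}_{X_k}^2$ and $\norm{\phi}_{H^s}^2\sim\sum_k 2^{2sk}\norm{\eta_k(\xi)\widehat{\phi}}_{L^2}^2$, it suffices to establish the single-block bound
\[
\norm{\eta_k(\xi)\ft(\psi W_\epsilon^\alpha\phi)}_{X_k}\les\norm{\eta_k(\xi)\widehat{\phi}}_{L^2}
\]
uniformly in $k\in\Z_+$ and $0\le\epsilon\le 1$, and then sum in $k$. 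Computing the full space-time Fourier transform gives $\ft(\psi W_\epsilon^\alpha\phi)(\xi,\tau)=\widehat{\phi}(\xi)\,\widehat{g_{\beta(\xi)}}(\tau-\xi^3)$, where $\beta(\xi)=\epsilon|\xi|^{2\alpha}$, $g_\beta(t)=\psi(t)e^{-\beta|t|}$, and $\widehat{g_\beta}$ is the Fourier transform in $t$.

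Inserting this into the $X_k$ norm and changing variables $\theta=\tau-\xi^3$ in the inner time integral, so that the inner integral equals $\norm{\eta_j(\theta)\widehat{g_{\beta(\xi)}}(\theta)}_{L^2_\theta}^2=:a_j(\xi)^2$, the left side becomes $\sum_j 2^{j/2}\big(\int\eta_k(\xi)^2|\widehat{\phi}(\xi)|^2 a_j(\xi)^2\,d\xi\big)^{1/2}$, which is bounded by $\big(\sum_j 2^{j/2}\sup_{\xi\in\supp\eta_k}a_j(\xi)\big)\norm{\eta_k\widehat{\phi}}_{L^2}$. Thus the proposition reduces to the uniform bound
\[
\sum_{j=0}^\infty 2^{j/2}\sup_{\xi\in\supp\eta_k}\norm{\eta_j(\theta)\widehat{g_{\beta(\xi)}}(\theta)}_{L^2_\theta}\les 1 .
\]
The decisive point is that $\beta(\xi)=\epsilon|\xi|^{2\alpha}$ stays within a fixed multiple of $\beta_k:=\epsilon 2^{2\alpha k}$ throughout $\supp\eta_k$ (for $k=0$ it merely stays $\les 1$), so the supremum runs over a bounded-ratio $\beta$-range and may be replaced by a single parameter. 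It therefore suffices to prove $\sum_j 2^{j/2}\norm{\eta_j\widehat{g_\beta}}_{L^2}\les 1$ uniformly for $\beta\ge 0$; note that keeping the supremum inside a single block is essential, since taking the supremum over all $\beta$ at once would diverge.

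To handle $\widehat{g_\beta}$ I would isolate its main term. Since the time Fourier transform of $e^{-\beta|\cdot|}$ equals $2\beta/(\beta^2+\theta^2)$, and since $\psi'$ is supported in $\{1\le|t|\le 2\}$, integration by parts in $t$ (there are no boundary contributions, and the resulting factors $\beta^m e^{-\beta t}$ with $t\ge 1$ stay bounded) yields, for $|\theta|\ge 1$,
\[
|\widehat{g_\beta}(\theta)|\les\frac{\beta}{\beta^2+\theta^2}+C_N\jb{\theta}^{-N},
\]
together with the trivial bound $|\widehat{g_\beta}(\theta)|\les\norm{g_\beta}_{L^1}\les 1$ valid for all $\theta$. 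The summation then splits routinely: the $j=0$ term is controlled by the $L^\infty$ bound; the remainder $\jb{\theta}^{-N}$ gives a series converging against $2^{j/2}$; and for the Poisson-type main term one finds $\norm{\eta_j\,\beta(\beta^2+\theta^2)^{-1}}_{L^2}\sim 2^{j/2}\beta^{-1}$ when $2^j\les\beta$ and $\sim\beta\,2^{-3j/2}$ when $2^j\ges\beta$, so that $2^{j/2}$ times it equals $2^j/\beta$ respectively $\beta/2^j$, each a geometric series summing to $O(1)$ uniformly in $\beta$. I expect the only genuine obstacle to be this uniform-in-$\beta$ (hence uniform-in-$\epsilon$) geometric summation, coupled with the observation that $\beta$ is essentially constant on each dyadic block; the Fourier computation, the integration by parts exploiting $\supp\psi'$, and the final sum over $k$ are bookkeeping.
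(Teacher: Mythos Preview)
Your proof is correct and takes a genuinely different route from the paper. Both arguments begin with the same reduction to the single-block estimate
\[
\sum_{j\ge 0}2^{j/2}\sup_{\xi\in\supp\eta_k}\norm{\eta_j(\theta)\,\widehat{g_{\beta(\xi)}}(\theta)}_{L^2_\theta}\les 1,\qquad g_\beta(t)=\psi(t)e^{-\beta|t|},
\]
but they diverge in how this is proved. The paper treats $k=0$ by Taylor-expanding $e^{-\beta|t|}$ (valid since $\beta\les 1$ there), and for $k\ge 1$ it uses a para-product decomposition of $\psi(t)\cdot e^{-\beta|t|}$ together with the observation that $e^{-|t|}\in\dot B^{1/2}_{2,1}$ and the scaling invariance of $\dot B^{1/2}_{2,1}$, so that $\sum_r 2^{r/2}\norm{P_r(e^{-\beta|\cdot|})}_{L^2}\les 1$ uniformly in $\beta$. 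Your argument instead writes $g_\beta=e^{-\beta|\cdot|}-(1-\psi)e^{-\beta|\cdot|}$, reads off the Poisson kernel $2\beta/(\beta^2+\theta^2)$ for the first piece, and kills the second piece by repeated integration by parts (the smooth tail lives on $|t|\ge 1$, so each factor $\beta^m e^{-\beta|t|}$ stays uniformly $L^1$-bounded). This is more elementary and self-contained; the paper's approach is slicker but relies on the Besov machinery. At bottom both are computing $\norm{g_\beta}_{\dot B^{1/2}_{2,1}}\les 1$ uniformly in $\beta$.

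One small point you should make explicit: the passage ``may be replaced by a single parameter'' is not just the statement $\sup_\beta\sum_j(\cdots)\les 1$, which would not suffice since the supremum sits inside the $j$-sum. What you actually use is that your pointwise upper bound has the form $\frac{\beta}{\beta^2+\theta^2}+C_N\jb{\theta}^{-N}$, and for $\beta$ ranging over a bounded-ratio interval around $\beta_k$ one has $\sup_{\beta\sim\beta_k}\frac{\beta}{\beta^2+\theta^2}\sim\frac{\beta_k}{\beta_k^2+\theta^2}$ pointwise in $\theta$; this is what lets you pull the supremum through. For $k=0$ the range $\beta\in[0,C]$ is not bounded-ratio, but the same pointwise bound gives $\sup_{0\le\beta\le C}\frac{\beta}{\beta^2+\theta^2}\les\min\big(\tfrac{1}{|\theta|},\tfrac{C}{\theta^2}\big)$ on $|\theta|\ge 1$, which still sums; the paper handles $k=0$ separately by Taylor expansion precisely to avoid this issue.
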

\begin{proof}
We only prove the case $0<\epsilon\leq 1$. By definition of $F^s$,
it suffices to prove that for $k\in \Z_+$
\begin{equation}
\norm{\eta_k(\xi)\ft(\psi(t)W_\epsilon^\alpha(t)\phi)}_{X_k}\les
\norm{\eta_k(\xi)\widehat{\phi}(\xi)}_{L^2}. \label{eq:4.7}
\end{equation}
In view of the definition, if $k=0$, then by Taylor's expansion
\begin{eqnarray*}
&&\norm{\eta_0(\xi)\ft(\psi(t)W_\epsilon^\alpha(t)\phi)}_{X_0}\\
&\les&
\sum_{j=0}^{\infty}2^{j/2}\normo{\eta_0(\xi)\widehat{\phi}(\xi)\ft_t\brk{\psi(t)\sum_{n\geq 0}\frac{(-1)^n\epsilon^n |\xi|^{2n\alpha}}{n!}|t|^n}(\tau)\eta_j(\tau)}_{L_{\xi,\tau}^2}\\
&\les& \sum_{n\geq
0}\frac{4^n}{n!}\norm{\eta_0(\xi)\widehat{\phi}(\xi)}_{L^2}\norm{|t|^n\psi(t)}_{H^1}
\les\norm{\eta_0(\xi)\widehat{\phi}(\xi)}_{L^2},
\end{eqnarray*}
which is the estimate \eqref{eq:4.7}, as desired. We  now consider
the cases $k\geq 1$. We first observe that if $|\xi|\sim 2^k$, then
for any $ j\geq 0$,
\begin{equation}\label{eq:obser}
\norm{P_j(e^{-\epsilon |\xi|^{2\alpha}|t|})(t)}_{L^2}\les
\norm{P_j(e^{-\epsilon 2^{2k\alpha}|t|})(t)}_{L^2},
\end{equation}
which follows from Plancherel's equality and the fact that
\[\ft(e^{-|t|})(\tau)=C\rev{1+|\tau|^2}.\]
It follows from the definition that
\begin{eqnarray*}
\norm{\eta_k(\xi)\ft(\psi(t)W_\epsilon^\alpha(t)\phi)}_{X_k}&\les&\sum_{j=0}^\infty
2^{j/2}
\normo{\eta_k(\xi)\widehat{\phi}(\xi)\eta_j(\tau)\ft_t\brk{\psi(t)e^{-\epsilon
|t||\xi|^{2\alpha}}}(\tau)}_{L_{\xi,\tau}^2}\\
&\les&\sum_{j=0}^\infty 2^{j/2}
\normo{\eta_k(\xi)\widehat{\phi}(\xi)P_j\brk{\psi(t)e^{-\epsilon
|t||\xi|^{2\alpha}}}(t)}_{L_{\xi,t}^2}\\
&\les&\sum_{j=0}^\infty 2^{j/2}
\normo{\eta_k(\xi)\widehat{\phi}(\xi)}_{L^2} \sup_{|\xi|\sim
2^k}\normo{P_j\brk{\psi(t)e^{-\epsilon
|t||\xi|^{2\alpha}}}(t)}_{L_t^2}.
\end{eqnarray*}
It suffices to show that for any $k\geq 1$,
\begin{eqnarray}\label{eq:p442}
\sum_{j=0}^\infty 2^{j/2} \sup_{|\xi|\sim 2^k}
\normo{P_j\brk{\psi(t)e^{-\epsilon
|t||\xi|^{2\alpha}}}(t)}_{L_t^2}\les 1.
\end{eqnarray}
We may assume $j\geq 100$ in the summation. Using the para-product
decomposition, we have
\begin{align}
u_1u_2 = \sum^\infty_{r=0} [(P_{r+1} u_1) (P_{\le r+1}u_2)+ (P_{\le
r}u_1) (P_{r+1}u_2)], \label{eq:para-pr}
\end{align}
and
\begin{align}
P_j(u_1u_2) = P_j \Big(\sum_{r\ge j-10} [(P_{r+1} u_1) (P_{\le
r+1}u_2)+ (P_{\le r}u_1) (P_{r+1}u_2)]\Big):=P_j(I+II).
\label{eq:para-pr-2}
\end{align}
Now we take $u_1=\psi(t)$ and $u_2= e^{-\epsilon
|t||\xi|^{2\alpha}}$.   It follows from Bernstein's estimate,
H\"older's inequality and \eqref{eq:obser} that
\begin{align}
\sum_{j \ge 100} 2^{j/2} \|P_j(II)\|_{L^\infty_\xi L^2_t} & \lesssim
\sum_{j \ge 100} 2^{j/2}  \sum_{r\ge j-10} \| P_{r+1}
u_2\|_{L^{\infty}_\xi L^{2}_t}
\|P_{\le r+1}u_1 \|_{L^\infty_{\xi,t}} \nonumber\\
& \lesssim \sum_{j\ge 100} 2^{(j-r)/2} \sum_{r\ge j-10} 2^{r/2}  \|
P_{r+1} u_2\|_{L^{\infty}_\xi L^{2}_t}
 \nonumber\\
& \lesssim \sum_{r}  2^{r/2}  \| P_{r+1} (e^{-\epsilon
|t|2^{2k\alpha}})\|_{L^{2}_t} \les 1,
\end{align}
where we used the fact that $\dot{B}_{2,1}^{1/2}$ has a scaling
invariance and $e^{-|t|}\in\dot{B}_{2,1}^{1/2}$. the first  term
$P_j(I)$ in \eqref{eq:para-pr-2} can be handled in an easier way.
Therefore, we complete the proof of the proposition.
\end{proof}

From the proof we see that $F^s$ norm has a same scale in time as
$B^{1/2}_{2,1}$ and $e^{-\epsilon C |t|}$. If applying $X^{1/2+,s}$
norm, one can not get a uniform estimate. Similarly for the
inhomogeneous linear operator we get

\begin{proposition}\label{p45}
Let $s\in \R$. There exists $C>0$ such that for all $v \in
\Sch(\R^2)$ and  $0\leq \epsilon\leq 1$,
\begin{equation}
\norm{\psi(t)L(v)}_{F^s}\leq C \norm{v}_{N^s}.
\end{equation}
\end{proposition}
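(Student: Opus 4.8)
The plan is to prove the inhomogeneous estimate in the same spirit as the free-solution estimate in Proposition \ref{p43}, reducing everything to dyadic-in-frequency pieces and then to a pointwise estimate for the time-multiplier coming from the operator $L$. By the definition of $N^s$ and $F^s$, after dyadic localization in $\xi$ it suffices to show, for each $k\in\Z_+$,
\[
\norm{\eta_k(\xi)\ft(\psi(t)L(v))}_{X_k}\les \norm{(i+\tau-\xi^3)^{-1}\eta_k(\xi)\ft(v)}_{X_k}.
\]
First I would write $v=W_0(\cdot)w$ so that the frequency-localized symbol of $L(v)$ reads
\[
\frac{e^{it\tau'}-e^{-\epsilon|t||\xi|^{2\alpha}}}{i\tau'+\epsilon|\xi|^{2\alpha}},
\]
and then insert the cutoff $\psi(t)$. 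Splitting the numerator into its two terms produces two contributions: an oscillatory piece $\psi(t)e^{it\tau'}$ and a dissipative piece $\psi(t)e^{-\epsilon|t||\xi|^{2\alpha}}$. The denominator $i\tau'+\epsilon|\xi|^{2\alpha}$, whose modulus is comparable to $\jb{\tau'}+\epsilon|\xi|^{2\alpha}\gtrsim\jb{\tau'}$ away from the resonance and needs care near $\tau'=0$, is what must be absorbed into the gain $(i+\tau-\xi^3)^{-1}$ on the $N^s$ side.

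The key step is to treat the two numerator terms separately against the dyadic modulation decomposition $\tau'-\xi^3\in I_{j'}$. For the first term, $\psi(t)e^{it\tau'}$ contributes a kernel concentrated near modulation $\tau-\xi^3\sim\tau'-\xi^3$, so the Schwartz tails of $\widehat\psi$ give rapid decay $(1+2^{-\max(j,j')}|2^j-2^{j'}|)^{-4}$-type off-diagonal bounds; combined with the denominator bound $|i\tau'+\epsilon|\xi|^{2\alpha}|^{-1}\les\jb{\tau'-\xi^3}^{-1}2^{0}$ on the region $j'\geq 1$ (using $\jb{\tau'}\sim\jb{\tau'-\xi^3}$ once the $\xi^3$ shift is accounted for), this reproduces the $2^{-j'}$ gain that matches the $(i+\tau-\xi^3)^{-1}$ factor. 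The summation over $j$ and $j'$ is then handled exactly as in the displayed $X_k$-boundedness property stated just after the definition of $X_k$ in Section \ref{notation}, namely the convolution-with-$2^{-l}(1+2^{-l}|\cdot|)^{-4}$ bound. For the second term, $\psi(t)e^{-\epsilon|t||\xi|^{2\alpha}}$, I would reuse the observation \eqref{eq:obser} and the para-product decomposition \eqref{eq:para-pr}--\eqref{eq:para-pr-2} from the proof of Proposition \ref{p43}, noting that this factor lives in $\dot B^{1/2}_{2,1}$ uniformly in $\epsilon$; the only new point is to check that dividing by $i\tau'+\epsilon|\xi|^{2\alpha}$ before the cutoff does not destroy this, which follows because on the support of the second term the denominator is bounded below by $\epsilon|\xi|^{2\alpha}$ precisely in the regime where $e^{-\epsilon|t||\xi|^{2\alpha}}$ has its time-frequency mass.

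The main obstacle I expect is the low-modulation regime $|\tau'|\lesssim 1$ (equivalently $j'=0$), where the denominator $i\tau'+\epsilon|\xi|^{2\alpha}$ can be as small as $\epsilon|\xi|^{2\alpha}$ and thus the naive bound $\jb{\tau'}^{-1}$ is not available. The cancellation built into $L$ is exactly designed for this: the numerator $e^{it\tau'}-e^{-\epsilon|t||\xi|^{2\alpha}}$ vanishes to first order as $i\tau'+\epsilon|\xi|^{2\alpha}\to 0$, so the quotient stays bounded (it is a difference quotient of $t\mapsto e^{ts}$ in the complex parameter $s$). I would make this rigorous by a Taylor/Duhamel expansion of the quotient, writing it as $\int_0^1 \partial_s[\,\cdot\,]\,d\theta$-type integral so that the dangerous denominator never actually appears, and then estimate the resulting smooth-in-$\xi$ expression in $X_k$ using the $\dot B^{1/2}_{2,1}$ scaling as before. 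This uniform-in-$\epsilon$ treatment of the resonant region is the crux, and once it is in place the remaining summations are routine and identical in structure to those already carried out for Proposition \ref{p43}.
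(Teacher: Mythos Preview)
Your proposal is correct and follows essentially the same approach as the paper: after the reduction to \eqref{eq:p452} the paper performs a four-way split of $k_\xi(t)$ into pieces $I$--$IV$ according to $|\tau'|\lessgtr 1$ and the two numerator terms, handles the high-modulation oscillatory and dissipative parts ($III$ and $IV$) exactly as you describe via the $B^{1/2}_{2,1}$ algebra property and \eqref{eq:p442}, and treats the low-modulation region by Taylor-expanding $e^{it\tau'}-1$ and $1-e^{-\epsilon|t||\xi|^{2\alpha}}$ \emph{separately} (which is a bit cleaner than your single difference-quotient picture, since the $|t|$ in the dissipative factor breaks the one-parameter structure for $t<0$). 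Otherwise your outline matches the paper's argument.
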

\begin{proof}
The idea is essential due to Molinet and Ribaud \cite{MR}. See also
section 5 in \cite{In-Ke}. We only prove the case $0<\epsilon\leq
1$. In view of definition, it suffices to prove that if $k\in \Z_+$,
\begin{eqnarray}
\norm{\eta_k(\xi)\ft(\psi(t)L(v))}_{X_k} \les
\norm{(i+\tau-\xi^3)^{-1}\eta_k(\xi)\ft(v)}_{X_k}.
\end{eqnarray}
We set \[w(\tau)=W_0(-\tau)v(\tau), \quad k_\xi(t)=\psi(t)\int_\R
\frac{e^{it\tau'}-e^{-\epsilon t
|\xi|^{2\alpha}}}{i\tau'+\epsilon|\xi|^{2\alpha}}\widehat{w}(\xi,
\tau') d\tau',\]
Therefore, by the definition, it suffices to prove
that
\begin{equation}\label{eq:p452}
\sum_{j=0}2^{j/2}\norm{\eta_k(\xi)\eta_j(\tau)\ft_t(k_\xi)(\tau)}_{L_{\xi,\tau}^2}
\les \sum_{j=0}2^{-j/2}\norm{\eta_k(\xi)\eta_j(\tau)\widehat{w}(\xi,
\tau)}_{L_{\xi,\tau}^2}.
\end{equation}
We first write
\begin{eqnarray*}
k_\xi(t)&=&\psi(t)\int_{|\tau|\leq
1}\frac{e^{it\tau}-1}{i\tau+\epsilon|\xi|^{2\alpha}}\widehat{w}(\xi,\tau)d\tau+\psi(t)\int_{|\tau|\leq
1}\frac{1-e^{-\epsilon
|t||\xi|^{2\alpha}}}{i\tau+\epsilon|\xi|^{2\alpha}}\widehat{w}(\xi,\tau)d\tau\\
&&+\psi(t)\int_{|\tau|\geq
1}\frac{e^{it\tau}}{i\tau+\epsilon|\xi|^{2\alpha}}\widehat{w}(\xi,\tau)d\tau-\psi(t)\int_{|\tau|\geq
1}\frac{e^{-\epsilon
|t||\xi|^{2\alpha}}}{i\tau+\epsilon|\xi|^{2\alpha}}\widehat{w}(\xi,\tau)d\tau\\
&=&I+II+III-IV.
\end{eqnarray*}
We now estimate the contributions of $I-IV$. First, we consider the
contribution of $IV$.
\begin{eqnarray*}
\sum_{j=0}2^{j/2}\norm{\eta_k(\xi)P_j(IV)(t)}_{L_{\xi,t}^2}&\leq&
\sum_{j=0}2^{j/2}\sup_{\xi\in I_k}\norm{\eta_k(\xi)P_j(\psi(t)e^{-\epsilon|t||\xi|^{2\alpha}})(t)}_{L_{t}^2}\nonumber\\
&&\quad \cdot\int_{|\tau|\geq 1}\frac{\norm{|\eta_k(\xi)\widehat{w}(\xi,\tau)|}_{L_\xi^2}}{|\tau|}d\tau\\
&\les&\sum_{j=0}2^{-j/2}\norm{\eta_k(\xi)\eta_j(\tau)\widehat{w}(\xi,
\tau)}_{L_{\xi,\tau}^2},
\end{eqnarray*}
where we use Taylor expansion for $k=0$ and \eqref{eq:p442} for
$k\geq 1$. Next, we consider the contribution of $III$. Setting
$g(\xi,\tau)=\frac{|\widehat{w}(\xi,\tau)|}{|i\tau+\epsilon|\xi|^{2\alpha}|}\chi_{|\tau|\geq
1}$ we have
\begin{eqnarray*}
\sum_{j=0}2^{j/2}\norm{\eta_k(\xi)P_j(III)(t)}_{L_{\xi,t}^2}&\les&\sum_{j=0}2^{j/2}\norm{\eta_k(\xi)\eta_j(\tau)\widehat{\psi}*_\tau g(\xi,\tau)}_{L_{\xi,\tau}^2}\\
&\les&\sum_{j\geq 1} 2^{j/2}\normo{
\frac{\eta_j(\tau')\norm{\eta_k(\xi)\widehat{w}(\xi,\tau')}_{L_\xi^2}}{|i\tau'|}\chi_{|\tau'|\geq
1}}_{L_{\tau'}^2}\nonumber\\
&\les&\sum_{j=0}2^{-j/2}\norm{\eta_k(\xi)\eta_j(\tau)\widehat{w}(\xi,
\tau)}_{L_{\xi,\tau}^2},
\end{eqnarray*}
where we used the fact that $B_{2,1}^{1/2}$ is a multiplication
algebra and that $\ft^{-1}(|\widehat{\psi}|)\in B_{2,1}^{1/2}$.
Thirdly, we consider the contribution of $II$. For
$\epsilon|\xi|^{2\alpha}\geq 1$, as for $IV$, we get
\begin{eqnarray*}
\sum_{j=0}2^{j/2}\norm{\eta_k(\xi)P_j(II)(t)}_{L_{\xi,t}^2}&\les&
\sum_{j=0}2^{j/2}\sup_{\xi\in I_k}\norm{\eta_k(\xi)P_j(\psi(1-e^{-\epsilon|t||\xi|^{2\alpha}}))(t)}_{L_{t}^2}\\
&&\cdot
\int\frac{\norm{\widehat{w}(\xi,\tau)}_{L_\xi^2}}{\jb{\tau}}d\tau
\nonumber\\
&\les&
\sum_{j=0}2^{-j/2}\norm{\eta_k(\xi)\eta_j(\tau)\widehat{w}(\xi,
\tau)}_{L_{\xi,\tau}^2}.
\end{eqnarray*}
For $\epsilon|\xi|^{2\alpha}\leq 1$, using Taylor's expansion, we
have
\begin{eqnarray*}
&&\sum_{j=0}2^{j/2}\norm{\eta_k(\xi)P_j(II)(t)}_{L_{\xi,t}^2}\\
&\les& \sum_{n\geq
1}\sum_{j=0}2^{j/2}\normo{\eta_k(\xi)\int_{|\tau|\leq
1}\frac{\widehat{w}(\xi,\tau)}{i\tau+\epsilon|\xi|^{2\alpha}}d\tau
P_j(|t|^n\psi(t))\frac{\epsilon^n|\xi|^{2\alpha
n}}{n!}}_{L_{\xi,t}^2} \\
&\les& \normo{\int_{|\tau|\leq
1}\frac{\epsilon|\xi|^{2\alpha}|\eta_k(\xi)\widehat{w}(\xi,\tau)|}{|i\tau+\epsilon|\xi|^{2\alpha}|}d\tau}_{L_\xi^2}\les\sum_{j=0}2^{-j/2}\norm{\eta_k(\xi)\eta_j(\tau)\widehat{w}(\xi,
\tau)}_{L_{\xi,\tau}^2},
\end{eqnarray*}
where in the last inequality we used the fact
$\norm{|t|^n\psi(t)}_{B^{1/2}_{2,1}}\leq\norm{|t|^n\psi(t)}_{H^{1}}\leq
C 2^n$. Finally, we consider the contribution of $I$.
\begin{eqnarray*}
I=\psi(t)\int_{|\tau|\leq 1}\sum_{n\geq
1}\frac{(it\tau)^n}{n!(i\tau+\epsilon|\xi|^{2\alpha})}\widehat{w}(\tau)d\tau.
\end{eqnarray*}
Thus, we get
\begin{eqnarray*}
&&\sum_{j=0}2^{j/2}\norm{\eta_k(\xi)P_j(I)(t)}_{L_{\xi,t}^2}\\
&\les&
\sum_{n\geq 1}\normo{\frac{t^n\psi(t)}{n!}}_{B^{1/2}_{2,1}}
\normo{\int_{|\tau|\leq1}\frac{|\tau|}{|i\tau+\epsilon|\xi|^{2\alpha}|}|\eta_k(\xi)\widehat{w}(\xi,\tau)|d\tau}_{L_\xi^2}\\
&\les&\sum_{j=0}2^{-j/2}\norm{\eta_k(\xi)\eta_j(\tau)\widehat{w}(\xi,
\tau)}_{L_{\xi,\tau}^2}.
\end{eqnarray*}
Therefore, we complete the proof of the proposition.
\end{proof}

In order to apply the standard fixed-point machinery, we next turn
to a bilinear estimate in $F^s$. The proof is divided into several
cases. We will use the estimate for the characterization multiplier
in Proposition \ref{pchar}. The first case is $low\times
high\rightarrow high$ interaction.
\begin{proposition}\label{punif}
If $k\geq 10$, $|k-k_2|\leq 5$, then for any $u\in F^s,\ v \in F^s$
\begin{equation}
\norm{(i+\tau-\xi^3)^{-1}\eta_k(\xi)i\xi\widehat{P_0u}*\widehat{P_{k_2}v}}_{X_k}\les
\norm{\widehat{P_0u}}_{X_0}\norm{\widehat{P_{k_2}v}}_{X_{k_2}}.
\end{equation}
\end{proposition}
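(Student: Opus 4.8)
The plan is to dualize and reduce the claimed bound to a weighted dyadic summation of the multiplier norms $\norm{\chi_{N_1,N_2,N_3;H;L_1,L_2,L_3}}_{[3;\R^2]}$ controlled in Proposition \ref{pchar}, in the same spirit as the reduction leading to \eqref{eq:bicase1}--\eqref{eq:bicase2}. Write $f_1=\widehat{P_0u}$ and $f_2=\widehat{P_{k_2}v}$, and split each factor into modulation pieces $f_{1,j_1}=\eta_{j_1}(\tau_1-\xi_1^3)f_1$ and $f_{2,j_2}=\eta_{j_2}(\tau_2-\xi_2^3)f_2$, so that $\norm{f_1}_{X_0}=\sum_{j_1}2^{j_1/2}\norm{f_{1,j_1}}_{L^2}$ and likewise for $f_2$. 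On the support of $\eta_j(\tau-\xi^3)$ one has $\abs{(i+\tau-\xi^3)^{-1}}\sim 2^{-j}$ and $\abs{\xi}\sim 2^k$, so the $X_k$-norm on the left is dominated by $\sum_{j}2^{-j/2+k}\norm{\eta_j(\tau-\xi^3)\eta_k(\xi)\,(f_1*f_2)}_{L^2}$. By duality and the definition of the $[3;\R^2]$-norm, each dyadic block satisfies $\norm{\eta_j\eta_k(f_{1,j_1}*f_{2,j_2})}_{L^2}\les\norm{\chi}_{[3;\R^2]}\norm{f_{1,j_1}}_{L^2}\norm{f_{2,j_2}}_{L^2}$ with $N_1\sim 1$, $N_2\sim N_3\sim 2^k$, $L_1\sim 2^{j_1}$, $L_2\sim 2^{j_2}$ and $L_3\sim 2^{j}$. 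Hence it suffices to prove, uniformly in $j_1,j_2\in\Z_+$, that $\sum_{j}2^{-j/2+k}\norm{\chi}_{[3;\R^2]}\les 2^{j_1/2+j_2/2}$; feeding this back and writing $\norm{f_{1,j_1}}_{L^2}=2^{-j_1/2}\brk{2^{j_1/2}\norm{f_{1,j_1}}_{L^2}}$ closes the estimate.

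To fit the problem into the framework of Proposition \ref{pchar} I would use the identity $h(\xi)=\xi_1^3+\xi_2^3+\xi_3^3=3\xi_1\xi_2\xi_3$ valid on $\Gamma_3$, together with $(\tau_1-\xi_1^3)+(\tau_2-\xi_2^3)+(\tau_3-\xi_3^3)=-h(\xi)$, which is exactly the source of the constraint $L_{max}\sim\max(L_{med},H)$ in \eqref{eq:dyadic1}. Since the low factor $P_0u$ is only localized to $\abs{\xi_1}\les 1$, I would further decompose it homogeneously, $f_1=\sum_{k_1\le 1}\chi_{k_1}(\xi)f_1$, so that $N_1\sim 2^{k_1}$ and $H\sim N_1N_2N_3\sim 2^{k_1+2k}$ becomes pinned down. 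Because here $N_{max}\sim 2^{k}\gg N_{min}\sim 2^{k_1}$, case (i) of Proposition \ref{pchar} never occurs, and one is always in case (ii) (the largest modulation sitting on the low-frequency slot, i.e. $L_1\sim H\ges L_2,L_3$) or in case (iii). In case (iii) this gives $\norm{\chi}_{[3;\R^2]}\les 2^{-k}L_{min}^{1/2}\min(2^{k_1+2k},L_{med})^{1/2}$, and in case (ii) $\norm{\chi}_{[3;\R^2]}\les 2^{-k}L_{min}^{1/2}\min(2^{k_1+2k},2^{k-k_1}L_{med})^{1/2}$, while the relation $L_1\sim H\sim 2^{k_1+2k}$ then supplies the large weight $2^{j_1/2}\sim 2^{k_1/2+k}$ in the $X_0$-norm.

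The heart of the matter is that the derivative loss $2^{k}$ has to be recovered entirely from the modulation variables: it is matched by $N_{max}^{-1}=2^{-k}$ against $\min(H,\cdot)^{1/2}\le H^{1/2}\sim 2^{k_1/2+k}$, after which the resolvent gain $2^{-j/2}$, combined with the constraint $L_{max}\sim\max(L_{med},H)$, turns $\sum_j 2^{-j/2+k}\norm{\chi}_{[3;\R^2]}$ into a convergent geometric series bounded by $2^{j_1/2+j_2/2}$; the surplus factor $2^{k_1/2}$ then makes the remaining sum over $k_1\le 1$ converge. I expect the genuine difficulty to be exactly this $low\times high\rightarrow high$ interaction, where no regularity gain is available from the frequencies — the two high frequencies are comparable and the low frequency carries no decay — so that $2^k$ must be absorbed by dispersion alone. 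The most delicate regime is $\abs{\xi_1}$ small, where $H\sim 2^{k_1+2k}$ is small and the resonance is weak, and where one of $L_2,L_3$ carries the top modulation: there $\min(H,L_{med})^{1/2}$ no longer supplies a clean $2^{k_1/2}$ gain, and I would instead exploit the almost-orthogonality of the homogeneous pieces $\chi_{k_1}f_1$ in $L^2$ (summing in $k_1$ by Cauchy--Schwarz on the range $2^{k_1}\ges L_{med}2^{-2k}$ forced by $L_{med}\le H$) and absorb the logarithmic losses from $\sum_j$ and $\sum_{k_1}$ into the reserve weights $2^{j_1/2},2^{j_2/2}$. Verifying that all these summations close uniformly in $k$ is the crux; the remaining steps are the routine dyadic bookkeeping already exemplified in the proof of Proposition \ref{p31}.
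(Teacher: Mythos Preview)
Your approach is correct but follows a different route from the paper. The paper does \emph{not} invoke Proposition~\ref{pchar} here (although it does for the neighboring Propositions~4.5--4.9). Instead, after the same dyadic reduction in $j,j_1,j_2$, it proves the uniform block estimate
\[
\norm{1_{D_{k,j}}u_{0,j_1}*v_{k,j_2}}_{L^2}\lesssim 2^{-k}\,2^{(j_1+j_2)/2}\norm{u_{0,j_1}}_{L^2}\norm{v_{k,j_2}}_{L^2}
\]
directly, by duality and the change of variables $\mu_1=\xi_1$, $\mu_2=\xi_1+\xi_2$; the key point is the Jacobian bound $\bigl|\partial_{\mu_1}\bigl(3\mu_1(\mu_2-\mu_1)\mu_2\bigr)\bigr|\sim 2^{2k}$, valid uniformly over $|\mu_1|\le 2$. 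This sidesteps entirely the homogeneous decomposition in $k_1$ and the attendant case analysis you describe. Your strategy---applying the dyadic multiplier bounds of Proposition~\ref{pchar} after localizing $|\xi_1|\sim 2^{k_1}$ and then re-summing in $k_1$ via the geometric weight $2^{k_1/2}$ coming from $H^{1/2}$, with Cauchy--Schwarz and log-loss absorption in the borderline sub-cases---does close (one can check each of the configurations of $L_{\max},L_{\rm med},L_{\min}$ against $H$), but it is noticeably longer. The trade-off: your argument stays within a single unified framework and makes transparent why the estimate is driven by dispersion alone; the paper's direct Jacobian argument is shorter and avoids the delicate small-$|\xi_1|$ regime altogether, since the derivative $\partial_{\xi_1}h$ remains $\sim 2^{2k}$ even when $\xi_1\to 0$.
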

\begin{proof}
For simplicity of notation we only prove the case that $k=k_2$,
since the other cases can be handled in the same way. From
definition of $X_k$, we get
\begin{eqnarray}
\norm{(i+\tau-\xi^3)^{-1}\eta_k(\xi)i\xi\widehat{P_0u}*\widehat{P_kv}}_{X_k}\les2^k\sum_{j,j_1,j_2\geq
0}2^{-j/2}\norm{1_{D_{k,j}}u_{0,j_1}*v_{k,j_2}}_2,
\end{eqnarray}
where
\[u_{0,j_1}=\eta_0(\xi)\eta_{j_1}(\tau-\xi^3)\widehat{u},\
v_{k,j_2}=\eta_k(\xi)\eta_{j_2}(\tau-\xi^3)\widehat{v}.\] Thus, in
view of definition it suffices to show that
\begin{equation}\label{eq:biunicase1}
\norm{1_{D_{k,j}}u_{0,j_1}*v_{k,j_2}}_2\les 2^{-k}
2^{(j_1+j_2)/2}\norm{u_{0,j_1}}_2\norm{v_{k,j_2}}_2.
\end{equation}
By duality and
$\xi_1^3+\xi_2^3-(\xi_1+\xi_2)^3=-3\xi_1\xi_2(\xi_1+\xi_2)$,
\eqref{eq:biunicase1} is equivalent to
\begin{eqnarray}
&&\aabs{\int \int u(\xi_1,\tau_1)v(\xi_2,\tau_2)g(\xi_1+\xi_2,
\tau_1+\tau_2-3\xi_1\xi_2(\xi_1+\xi_2))d\xi_1d\xi_2d\tau_1d\tau_2}\nonumber\\
&&\les \ 2^{-k} 2^{(j_1+j_2)/2}\norm{u}_2\norm{v}_2 \norm{g}_2
\end{eqnarray}
for any $u,v,g\in L^2$ supported in $I_0\times I_{j_1}$, $I_k\times
I_{j_2}$, $I_k\times I_{j}$ respectively. Therefore, it suffices to
show that
\begin{eqnarray}\label{eq:biunicase12}
&&\int_{|\xi_1|\leq 2} \int_{|\xi_2|\sim 2^k}
u(\xi_1)v(\xi_2)g(\xi_1+\xi_2,
-3\xi_1\xi_2(\xi_1+\xi_2))d\xi_1d\xi_2\nonumber\\
& & \les \ 2^{-k}\norm{u}_2\norm{v}_2 \norm{g}_2
\end{eqnarray}
for any $u,v,g\in L^2$ supported in $I_0$, $I_k$, $I_k\times
\widetilde{I}_{j_{max}}$ respectively where
$j_{max}=\max(j,j_1,j_2)$ and $\widetilde{I}_{j_{max}}=\cup_{l=-3}^3
I_{j_{max}+l}$.

Indeed, by changing the coordinates $\mu_1=\xi_1,\
\mu_2=\xi_1+\xi_2$, the left-side of \eqref{eq:biunicase12} is
bounded by
\begin{equation}\label{eq:biunicase13}
\int_{|\mu_1|\leq 2} \int_{|\mu_2|\sim 2^k}
u(\mu_1)v(\mu_2-\mu_1)g(\mu_2,
-3\mu_1(\mu_2-\mu_1)\mu_2)d\mu_1d\mu_2.
\end{equation}
Since in the integration area
\begin{equation}
\left|\frac{\partial}{\partial_{\mu_1}}[-3\mu_1(\mu_2-\mu_1)\mu_2]\right|\sim
2^{2k},
\end{equation}
then by Cauchy-Schwarz inequality we get
\begin{eqnarray}
\eqref{eq:biunicase13}&\les& \norm{u}_2\norm{v}_2 \norm{g(\mu_2,
-3\mu_1(\mu_2-\mu_1)\mu_2)}_{L_{|\mu_1|\leq 2, |\mu_2|\sim 2^k}^2}\nonumber\\
&\les&2^{-k} \norm{u}_2 \norm{v}_2 \norm{g}_2,
\end{eqnarray}
which completes the proof.
\end{proof}

\begin{proposition}
If $k\geq 10$, $|k-k_2|\leq 5$ and $1\leq k_1\leq k-9$. Then for any
$u, v\in F^s$
\begin{equation}
\norm{(i+\tau-\xi^3)^{-1}\eta_k(\xi)i\xi\widehat{P_{k_1}u}*\widehat{P_{k_2}v}}_{X_k}\les\
k^32^{-k/2}2^{-k_1}
\norm{\widehat{P_{k_1}u}}_{X_{k_1}}\norm{\widehat{P_{k_2}v}}_{X_{k_2}}.
\end{equation}
\end{proposition}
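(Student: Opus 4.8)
The plan is to reproduce the scheme of Proposition \ref{punif}: reduce the $X_k$ bound to a dyadic $L^2$ convolution estimate and then invoke the characterization of $\norm{\chi_{N_1,N_2,N_3;H;L_1,L_2,L_3}}_{[3;\R^2]}$ from Proposition \ref{pchar}. Writing $u_{k_1,j_1}=\eta_{k_1}(\xi)\eta_{j_1}(\tau-\xi^3)\widehat{u}$ and $v_{k_2,j_2}=\eta_{k_2}(\xi)\eta_{j_2}(\tau-\xi^3)\widehat{v}$, the definition of $X_k$ gives
\[
\norm{(i+\tau-\xi^3)^{-1}\eta_k(\xi)i\xi\widehat{P_{k_1}u}*\widehat{P_{k_2}v}}_{X_k}\les 2^k\sum_{j,j_1,j_2\ge 0}2^{-j/2}\norm{1_{D_{k,j}}u_{k_1,j_1}*v_{k_2,j_2}}_2 ,
\]
the prefactor $2^k$ coming from $i\xi$ with $|\xi|\sim 2^k$. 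Estimating $\norm{1_{D_{k,j}}u_{k_1,j_1}*v_{k_2,j_2}}_2\le\norm{\chi_{N_1,N_2,N_3;H;L_1,L_2,L_3}}_{[3;\R^2]}\norm{u_{k_1,j_1}}_2\norm{v_{k_2,j_2}}_2$ by duality (exactly as in the proof of Proposition \ref{p31}) and writing $\norm{u_{k_1,j_1}}_2=2^{-j_1/2}c_{j_1}$, $\norm{v_{k_2,j_2}}_2=2^{-j_2/2}d_{j_2}$ with $\sum_{j_1}c_{j_1}=\norm{\widehat{P_{k_1}u}}_{X_{k_1}}$, $\sum_{j_2}d_{j_2}=\norm{\widehat{P_{k_2}v}}_{X_{k_2}}$ and $c_{j_1},d_{j_2}\ge 0$, it suffices to establish the uniform bound
\[
\sup_{j_1,j_2}\ 2^k\sum_{j\ge 0}2^{-(j+j_1+j_2)/2}\norm{\chi_{N_1,N_2,N_3;H;L_1,L_2,L_3}}_{[3;\R^2]}\les k^3\,2^{-k/2}2^{-k_1}.
\]

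The next step is to fix the dyadic regime. Since $N_1=2^{k_1}$ and $N_2\sim N_3\sim 2^k$ with $k_1\le k-9$, we are in the configuration $N_{max}\sim N_{med}\sim 2^k\gg N_{min}=2^{k_1}$, whence $H\sim N_1N_2N_3\sim 2^{k_1+2k}$ and, by \eqref{eq:dyadic1}, $L_{max}\sim\max(L_{med},H)$; in particular case (i) of Proposition \ref{pchar} never occurs, and $L_1=2^{j_1}$ is the modulation attached to the low frequency. I would then split according to whether case (ii) of Proposition \ref{pchar} applies, i.e. whether $H\sim L_1\ges L_2,L_3$.

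In the generic regime I would insert \eqref{eq:chariii}. Cancelling $2^k$ against $N_{max}^{-1}=2^{-k}$ and using $j+j_1+j_2=\log_2(L_{min}L_{med}L_{max})$, the summand collapses to $\min(H,L_{med})^{1/2}L_{med}^{-1/2}L_{max}^{-1/2}$; the constraint $L_{max}\sim\max(L_{med},H)$ then pins $L_{max}\ges H$ and yields a summand $\les H^{-1/2}=2^{-k}2^{-k_1/2}$, which is already $\les 2^{-k/2}2^{-k_1}$ because $k_1\le k-9$. The two free modulation indices each range over $O(k)$ dyadic scales up to $H\sim 2^{k_1+2k}$, costing only powers of $k$. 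In the case (ii) regime I would instead use \eqref{eq:charii}: here $L_1\sim H$ is pinned, $\{L_2,L_3\}=\{L_{min},L_{med}\}$, and the refined factor $\min(H,\tfrac{N_{max}}{N_{min}}L_{med})^{1/2}=\min(2^{k_1+2k},2^{k-k_1}L_{med})^{1/2}$ is exactly what is needed; a direct check of the two sub-cases $2^{k-k_1}L_{med}\gtrless H$ shows the summand collapses to $\les 2^{-k/2}2^{-k_1}$, again up to an $O(k)$ loss from the remaining sum.

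The main obstacle is the bookkeeping of the dyadic summation: one must carefully track which of the three modulations is the minimum, median, and maximum, use $L_{max}\sim\max(L_{med},H)$ in each sub-case to eliminate one index, and verify that the sharp factor in case (ii) genuinely delivers the full $2^{-k_1}$ rather than the weaker $2^{-k_1/2}$ produced by \eqref{eq:chariii}. The accumulated logarithmic losses from summing non-decaying terms over the $O(k)$ available scales account for the factor $k^3$ in the final constant.
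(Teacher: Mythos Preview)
Your approach is correct and matches the paper's in its essentials: reduce to dyadic blocks, invoke Proposition~\ref{pchar}, and exploit the resonance constraint $j_{max}\gtrsim 2k+k_1$. The paper's execution is slightly more economical: rather than splitting into the case~(ii) and case~(iii) regimes, it observes that the single bound $\norm{\chi}\les L_{min}^{1/2}N_{max}^{-1/2}N_{min}^{-1/2}L_{med}^{1/2}$ (the $\tfrac{N_{max}}{N_{min}}L_{med}$ branch of \eqref{eq:charii}) dominates \eqref{eq:chariii} whenever $N_{min}\le N_{max}$, so it can be used uniformly. This turns the summand into $2^{k/2-k_1/2}\cdot 2^{j_1/2}2^{j_2/2}2^{-j_{max}/2}\norm{u_{k_1,j_1}}_2\norm{v_{k_2,j_2}}_2$, after which the $j_1,j_2$ sums assemble directly into $X_{k_1},X_{k_2}$ norms and the residual $2^{-j_{max}/2}$ with $j_{max}\ge 2k+k_1-10$ supplies the missing $2^{-k-k_1/2}$ factor; the $k^3$ absorbs the logarithmic overcounting. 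Your case split reaches the same endpoint but with more bookkeeping. One small imprecision: in your setup you have fixed $j_1,j_2$ and sum only over $j$, so there is a single free modulation index, not two; the $O(k)$ loss still arises, from the range of $j$ below $j_{max}$ when $j_{max}\in\{j_1,j_2\}$.
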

\begin{proof}
We only prove the case $k=k_2$. From the definition, we get
\begin{eqnarray}
\norm{(i+\tau-\xi^3)^{-1}\eta_k(\xi)i\xi\widehat{P_{k_1}u}*\widehat{P_kv}}_{X_k}\les2^k\sum_{j,j_1,j_2\geq
0}2^{-j/2}\norm{1_{D_{k,j}}u_{k_1,j_1}*v_{k,j_2}}_2,
\end{eqnarray}
where
\[u_{k_1,j_1}=\eta_{k_1}(\xi)\eta_{j_1}(\tau-\xi^3)\widehat{u},\
v_{k,j_2}=\eta_k(\xi)\eta_{j_2}(\tau-\xi^3)\widehat{v}.\] By
checking the support properties of the functions $u_{k_1,j_1},\
v_{k_2,j_2}$ and using the fact that
$|\xi_1^3+\xi_2^3-(\xi_1+\xi_2)^3|\sim 2^{2k+k_1}$, we get that
$1_{D_{k,j}}u_{k_1,j_1}*v_{k,j_2}\equiv 0$ unless $j_{max}\geq
2k+k_1-10$. Using \eqref{eq:charii}, we get
\begin{eqnarray}
&&2^k\sum_{j,j_1,j_2\geq 0}2^{-j/2}\norm{1_{D_{k,j}}u_{k_1,j_1}*v_{k,j_2}}_2\nonumber\\
&&\les \ 2^k\sum_{j,j_1,j_2\geq 0}2^{-j/2}2^{j_{min}/2}2^{-k/2}2^{-k_1/2}2^{j_{med}/2}\norm{u_{k_1,j_1}}_2\norm{v_{k,j_2}}_2\nonumber\\
&&\les \ 2^k\sum_{j_{max}\geq
2k+k_1-10}k^32^{-k/2}2^{-k_1/2}2^{-j_{max}/2}\norm{\widehat{P_{k_1}u}}_{X_{k_1}}\norm{\widehat{P_kv}}_{X_k}\nonumber\\
&&\les \
k^32^{-k/2}2^{-k_1}\norm{\widehat{P_{k_1}u}}_{X_{k_1}}\norm{\widehat{P_kv}}_{X_k},
\end{eqnarray}
which completes the proof of the proposition.
\end{proof}

The second case is $high\times high \rightarrow low$. This case is
the worst and where the condition is imposed. This is easy to be
seen, since $s\leq 0$ and  $\norm{u}_{F^s}, \norm{v}_{F^s}$ are
small for $u,v$ with very high frequency.
\begin{proposition}
If $k\geq 10$, $|k-k_2|\leq 5$, then for any $u,\ v \in F^s$
\begin{equation}\label{eq:biunicase2}
\norm{(i+\tau-\xi^3)^{-1}\eta_0(\xi)i\xi\widehat{P_{k}u}*\widehat{P_{k_2}v}}_{X_0}\les
\  k^32^{-3k/2}
\norm{\widehat{P_{k}u}}_{X_{k}}\norm{\widehat{P_{k_2}v}}_{X_{k_2}}.
\end{equation}
\end{proposition}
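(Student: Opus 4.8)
The plan is to mirror the reductions used for the $low\times high\to high$ case in Proposition \ref{punif}, but now to track carefully the smoothing supplied by the factor $i\xi$ at the low output frequency. Following the computation preceding \eqref{eq:biunicase1}, the definition of $X_0$ (together with $\norm{(i+\tau-\xi^3)^{-1}\cdot}_{X_0}\sim\sum_j 2^{-j/2}\norm{\eta_j(\tau-\xi^3)\cdot}_2$) gives
\[
\norm{(i+\tau-\xi^3)^{-1}\eta_0(\xi)i\xi\widehat{P_{k}u}*\widehat{P_{k_2}v}}_{X_0}\les \sum_{j,j_1,j_2\geq 0}2^{-j/2}\norm{1_{D_{0,j}}\,i\xi\,(u_{k,j_1}*v_{k_2,j_2})}_2,
\]
where $u_{k,j_1}=\eta_k(\xi)\eta_{j_1}(\tau-\xi^3)\widehat u$ and $v_{k_2,j_2}=\eta_{k_2}(\xi)\eta_{j_2}(\tau-\xi^3)\widehat v$. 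Since the output frequency $\xi=\xi_1+\xi_2$ ranges over the whole ball $|\xi|\les 1$, I would first split $\eta_0(\xi)=\sum_{2^{k_3}\les 1}\rho_{k_3}(\xi)$ into homogeneous dyadic shells $|\xi|\sim 2^{k_3}$; on each shell the multiplier $|i\xi|\sim 2^{k_3}$ supplies a genuine gain, and by the triangle inequality in $X_0$ it suffices to prove, for each $k_3$ with $2^{k_3}\les 1$, the dyadic bound
\[
\norm{1_{D_{0,j}}\,\rho_{k_3}\,(u_{k,j_1}*v_{k_2,j_2})}_2\les 2^{k_3}\,\norm{\chi_{N_1,N_2,N_3;H;L_1,L_2,L_3}}_{[3;\R^2]}\,\norm{u_{k,j_1}}_2\norm{v_{k_2,j_2}}_2,
\]
with $N_1=N_2=2^k$, $N_3=2^{k_3}$, $(L_1,L_2,L_3)=(2^{j_1},2^{j_2},2^{j})$, and then to sum over $j,j_1,j_2$ and $k_3$.

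For the multiplier norm I would invoke Proposition \ref{pchar}. The resonance identity $\xi_1^3+\xi_2^3-(\xi_1+\xi_2)^3=-3\xi_1\xi_2(\xi_1+\xi_2)$ together with $|\xi_1|\sim|\xi_2|\sim 2^k$ and $|\xi_1+\xi_2|\sim 2^{k_3}$ gives $H\sim 2^{2k+k_3}$, and the relation $L_{max}\sim\max(L_{med},H)$ from \eqref{eq:dyadic1} forces $j_{max}\ges 2k+k_3$ up to an additive constant. Two regimes then occur. When the output modulation $2^j$ is the largest (the high modulation sits on the low output frequency) this is case (ii) of Proposition \ref{pchar}, in the permutation where the small frequency is the output, so \eqref{eq:charii} applies and yields the bound $L_{min}^{1/2}2^{-k}\min(2^{2k+k_3},2^{k-k_3}L_{med})^{1/2}$; in every other configuration the largest modulation sits on one of the two high input frequencies, and case (iii) supplies the strictly smaller bound \eqref{eq:chariii}.

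The remaining step is the summation. Inserting these bounds, pairing $2^{j_1/2}\norm{u_{k,j_1}}_2$ and $2^{j_2/2}\norm{v_{k_2,j_2}}_2$ against the $\ell^1$ structure of the $X_k$-norm, and summing the geometric weight $2^{-j/2}$ subject to $j_{max}\ges 2k+k_3$, the worst contribution comes from case (ii) with low-modulation inputs ($j_1,j_2=O(1)$, so $L_{med}\sim 1$) and output modulation $2^j\sim H\sim 2^{2k+k_3}$. There the net power is $2^{k_3}\cdot 2^{-k}\cdot\min(2^{2k+k_3},2^{k-k_3})^{1/2}\cdot 2^{-(2k+k_3)/2}$, which equals $2^{-3k/2}$ for every admissible $k_3$: the regime $k_3> -k/2$ contributes $\les k\,2^{-3k/2}$ (about $k/2$ shells, each of size $2^{-3k/2}$), while the regime $k_3<-k/2$ contributes $\sum 2^{k_3-k}\sim 2^{-3k/2}$, the factor $2^{k_3}$ from $i\xi$ forcing geometric convergence as $k_3\to-\infty$. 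Case (iii) is handled identically and is strictly better, and the logarithmic pile-up in the three dyadic modulation variables and in the shell index accounts for the harmless polynomial loss $k^3$.

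I expect the main obstacle to be precisely this $high\times high\to low$ regime, flagged in the text as the worst. The sharp decay $2^{-3k/2}$ is produced by case (ii), and obtaining it requires exploiting the \emph{full} gain $\min(H,\tfrac{N_{max}}{N_{min}}L_{med})^{1/2}$ of \eqref{eq:charii} — in particular that the second argument $\tfrac{N_{max}}{N_{min}}L_{med}$ is the active (smaller) one when the input modulations are small — rather than discarding the minimum and keeping only $H^{1/2}$, which would cost an extra $2^{k/2}$. Simultaneously the $2^{k_3}$ smoothing from $i\xi$ is indispensable: it both fixes the power and makes the sum over the low-frequency shells $|\xi|\sim 2^{k_3}$ converge. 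Balancing these two ingredients across the critical range $k_3\approx -k/2$, where $H\sim\tfrac{N_{max}}{N_{min}}L_{med}$, is the crux of the estimate.
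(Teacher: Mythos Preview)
Your proposal is correct and follows essentially the same route as the paper: dyadically decompose the low output frequency $|\xi|\sim 2^{k_3}$ (the paper writes $k'$), use the resonance identity to pin $H\sim 2^{2k+k_3}$ and force $j_{max}\ges 2k+k_3$, apply \eqref{eq:charii}, and sum. The only cosmetic differences are that the paper first truncates to $k'\ge -10k$ and $j,j_1,j_2\le 10k$ via the crude H\"older--Young bound $\norm{1_{D_{k',j}}u_{k,j_1}*v_{k,j_2}}_2\les 2^{j_{min}/2}2^{k'/2}\norm{u_{k,j_1}}_2\norm{v_{k,j_2}}_2$, and then uses the single upper bound $L_{min}^{1/2}N_{max}^{-1/2}N_{min}^{-1/2}L_{med}^{1/2}$ (which dominates both \eqref{eq:charii} and \eqref{eq:chariii}) throughout, rather than splitting into your two $k_3$-regimes; your more explicit regime analysis around $k_3\approx -k/2$ recovers the same $k^3 2^{-3k/2}$.
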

\begin{proof}
As before we assume $k=k_2$. From the definition, we get
\begin{eqnarray}
\norm{(i+\tau-\xi^3)^{-1}\eta_0(\xi)i\xi\widehat{P_{k}u}*\widehat{P_kv}}_{X_0}\les\sum_{k'=-\infty}^0
2^{k'}\sum_{j,j_1,j_2=0}2^{-j/2}\norm{1_{D_{k',j}}u_{k,j_1}*v_{k,j_2}}_2,
\end{eqnarray}
where
\begin{equation}\label{eq:ukj}
u_{k,j_1}=\eta_{k}(\xi)\eta_{j_1}(\tau-\xi^3)\widehat{u},\
v_{k,j_2}=\eta_k(\xi)\eta_{j_2}(\tau-\xi^3)\widehat{v}.
\end{equation}
 We may
assume that $k'\geq -10k$ and $j,j_1,j_2\leq 10k$. Otherwise, from
the following simple estimate which follows from H\"older's
inequality and Young's inequality
\[\norm{1_{D_{k',j}}u_{k,j_1}*v_{k,j_2}}_2\les 2^{j_{min}/2}2^{k'/2}\norm{u_{k,j_1}}_2\norm{v_{k,j_2}}_2\]
we immediately obtain \eqref{eq:biunicase2}. For the same reason as
in the proof of last proposition, we see that $j_{max}\geq
2k+k'-10$. Using \eqref{eq:charii}, we get
\begin{eqnarray}
&&\norm{(i+\tau-\xi^3)^{-1}\eta_0(\xi)i\xi\widehat{P_{k}u}*\widehat{P_kv}}_{X_0}\nonumber\\
&&\les \ \sum_{k'=-10k}^0
2^{k'}\sum_{j,j_1,j_2\geq 0}2^{-j/2}\norm{1_{D_{k',j}}u_{k,j_1}*v_{k,j_2}}_2\nonumber\\
&&\les \ \sum_{k'=-10k}^0\sum_{j,j_1,j_2\geq 0}2^{-j/2}
2^{k'}2^{j_{min}/2}2^{-k/2}2^{-k'/2}2^{j_{med}/2}\norm{u_{k,j_1}}_2\norm{v_{k,j_2}}_2\nonumber\\
&&\les \ \sum_{k'=-10k}^0\sum_{j_{max}\geq
2k+k'}k^22^{-k/2}2^{k'/2}2^{-j_{max}/2}\norm{\widehat{P_{k}u}}_{X_{k}}\norm{\widehat{P_kv}}_{X_k}\nonumber\\
&&\les \
k^32^{-3k/2}\norm{\widehat{P_{k}u}}_{X_{k}}\norm{\widehat{P_kv}}_{X_k}.
\end{eqnarray}
Therefore, we complete the proof of the proposition.
\end{proof}

\begin{proposition}
If $k\geq 10$,$|k-k_2|\leq 5$ and $1\leq k_1\leq k-9$, then for any
$u,\ v \in F^s$
\begin{equation}
\norm{(i+\tau-\xi^3)^{-1}\eta_{k_1}(\xi)i\xi\widehat{P_{k}u}*\widehat{P_{k_2}v}}_{X_{k_1}}\les
\  k^32^{-3k/2}
\norm{\widehat{P_{k}u}}_{X_{k}}\norm{\widehat{P_{k_2}v}}_{X_{k_2}}.
\end{equation}
\end{proposition}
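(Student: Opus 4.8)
The plan is to treat this exactly as the preceding $high\times high\to low$ proposition, the only change being that the low output frequency is now the fixed dyadic value $2^{k_1}$ with $k_1\ge 1$ rather than $2^{k'}$ with $k'\le 0$. In particular there is no outer summation over the output frequency, so I expect the final power of $k$ to be smaller than $k^3$ and the stated bound to hold with room to spare. First I would reduce to $k=k_2$ by symmetry and unfold the $X_{k_1}$ norm: since $|i\xi|\sim 2^{k_1}$ on the support of $\eta_{k_1}$ and the factor $(i+\tau-\xi^3)^{-1}$ turns the weight $2^{j/2}$ from the $X_{k_1}$ definition into $2^{-j/2}$, the left-hand side is controlled by $2^{k_1}\sum_{j,j_1,j_2\ge 0}2^{-j/2}\norm{1_{D_{k_1,j}}u_{k,j_1}*v_{k,j_2}}_2$, with $u_{k,j_1},v_{k,j_2}$ as in \eqref{eq:ukj}.

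Next I would record the resonance identity $|\xi_1^3+\xi_2^3-(\xi_1+\xi_2)^3|=3|\xi_1\xi_2(\xi_1+\xi_2)|\sim 2^{2k+k_1}$, valid on the support of the relevant convolution, which forces $1_{D_{k_1,j}}u_{k,j_1}*v_{k,j_2}\equiv 0$ unless $j_{max}\ge 2k+k_1-10$. Here $N_{max}\sim N_{med}\sim 2^k\gg 2^{k_1}\sim N_{min}$, so case (i) of Proposition \ref{pchar} is excluded; in the two remaining cases the bound \eqref{eq:charii} dominates \eqref{eq:chariii}, so I may apply \eqref{eq:charii} throughout. With $N_{max}=2^k$, $N_{min}=2^{k_1}$ and the crude estimate $\min(H,\tfrac{N_{max}}{N_{min}}L_{med})^{1/2}\le 2^{(k-k_1)/2}2^{j_{med}/2}$, this yields $\norm{1_{D_{k_1,j}}u_{k,j_1}*v_{k,j_2}}_2\les 2^{j_{min}/2}2^{-k/2-k_1/2}2^{j_{med}/2}\norm{u_{k,j_1}}_2\norm{v_{k,j_2}}_2$.

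The final step is the dyadic summation. Using $2^{j_{min}/2+j_{med}/2}=2^{(j+j_1+j_2-j_{max})/2}$, the modulation weights telescope, so after extracting the $X_k$-atoms $2^{j_i/2}\norm{\cdot}_2$ the triple sum reduces to $\sum_{j_{max}\ge 2k+k_1-10}2^{-j_{max}/2}$ tested against the two $X_k$ norms. Summing the maximal index against the constraint produces $2^{-(2k+k_1)/2}$, while the single undecaying free modulation index (present when $j_1$ or $j_2$ realizes $j_{max}$, so that the output index $j$ ranges freely over $[0,j_{max}]$ with no decay) contributes a factor $\les k$. Collecting powers, $2^{k_1}\cdot 2^{-k/2-k_1/2}\cdot k\,2^{-(2k+k_1)/2}=k\,2^{-3k/2}$, so the $2^{k_1}$ lost to $i\partial_x$ is exactly cancelled by the $2^{-k_1/2}$ from the multiplier and the $2^{-k_1/2}$ from the resonance, leaving the $k_1$-independent bound $\les k\,2^{-3k/2}\le k^3 2^{-3k/2}$ as claimed.

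The only genuine obstacle is the bookkeeping in $k_1$: one must check that the derivative loss $2^{k_1}$ is fully absorbed so that the constant is independent of $k_1$, and that \eqref{eq:charii} is legitimately the weakest applicable bound in every subcase (i.e. that case (i) truly cannot occur). The modulation summation itself is routine once the constraint $j_{max}\ge 2k+k_1-10$ is in place, and the slack between the $k$ I expect to obtain and the claimed $k^3$ means no sharp counting is required.
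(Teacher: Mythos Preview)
Your proposal is correct and follows essentially the same route as the paper: reduce to $k=k_2$, unfold the $X_{k_1}$ norm, invoke the resonance constraint $j_{\max}\ge 2k+k_1-10$, apply the bound \eqref{eq:charii} (which dominates \eqref{eq:chariii} here since $N_{\max}/N_{\min}\ge 1$, and case (i) is excluded by $N_{\max}\gg N_{\min}$), and sum the modulations. The paper additionally restricts at the outset to $j,j_1,j_2\le 10k$ (handling the complementary range by the trivial H\"older--Young bound, as in the preceding proposition), which makes the logarithmic loss transparent; your telescoping identity $2^{-j/2}2^{j_{\min}/2+j_{\mathrm{med}}/2}=2^{j_1/2}2^{j_2/2}2^{-j_{\max}/2}$ sidesteps this reduction and yields the sharper constant $k$ in place of the paper's $k^3$, since $(j_{\max}+1)2^{-j_{\max}/2}\les k\,2^{-(2k+k_1)/2}$ on the constrained range.
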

\begin{proof}
As before we assume $k=k_2$. From the definition of $X_{k_1}$, we
get
\begin{eqnarray}\label{eq:highhigh2}
\norm{(i+\tau-\xi^3)^{-1}\eta_{k_1}(\xi)i\xi\widehat{P_{k}u}*\widehat{P_kv}}_{X_{k_1}}\les
\ 2^{k_1}\sum_{j,j_1,j_2\geq
0}2^{-j/2}\norm{1_{D_{k_1,j}}u_{k,j_1}*v_{k,j_2}}_2,
\end{eqnarray}
where $u_{k,j_1},v_{k,j_2}$ are as in \eqref{eq:ukj}. For the same
reason as before we have $j_{max}\geq 2k+k_1-10$ and we may assume
$j,j_1,j_2\leq 10k$. It follows from \eqref{eq:charii} that the
right-hand side of \eqref{eq:highhigh2} is bounded by
\begin{eqnarray*}
&&\sum_{j,j_1,j_2\geq 0}2^{-j/2}
2^{k_1}2^{j_{min}/2}2^{-k/2}2^{-{k_1}/2}2^{j_{med}/2}\norm{u_{k,j_1}}_2\norm{v_{k,j_2}}_2\nonumber\\
&&\les \ \sum_{j_{max}\geq
2k+k_1}k^22^{-k/2}2^{{k_1}/2}2^{-j_{max}/2}\norm{\widehat{P_{k}u}}_{X_{k}}\norm{\widehat{P_kv}}_{X_k}\les
\
k^32^{-3k/2}\norm{\widehat{P_{k}u}}_{X_{k}}\norm{\widehat{P_kv}}_{X_k}.
\end{eqnarray*}
Therefore we complete the proof of the proposition.
\end{proof}

\begin{proposition}
If $k\geq 10$, $|k-k_2|\leq 5$ and $k-9\leq k_1\leq k+10$, then for
any $u,\ v \in F^s$
\begin{equation}
\norm{(i+\tau-\xi^3)^{-1}\eta_{k_1}(\xi)i\xi\widehat{P_{k}u}*\widehat{P_{k_2}v}}_{X_{k_1}}\les
\  k^32^{-3k/4}
\norm{\widehat{P_{k}u}}_{X_{k}}\norm{\widehat{P_{k_2}v}}_{X_{k_2}}.
\end{equation}
\end{proposition}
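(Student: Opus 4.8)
The plan is to follow the dyadic reduction used in the three preceding propositions, the one new feature being that all three frequencies are now comparable, $N_1\sim N_2\sim N_3\sim 2^k$, so this is a genuine $high\times high\rightarrow high$ interaction. As before I would first reduce to $k=k_2$ (the cases $|k-k_2|\le 5$ differ only by harmless constants) and unfold the definition of $X_{k_1}$ to bound the left-hand side by
\[
2^{k_1}\sum_{j,j_1,j_2\ge 0}2^{-j/2}\norm{1_{D_{k_1,j}}\,u_{k,j_1}*v_{k,j_2}}_2,
\]
where $u_{k,j_1}=\eta_k(\xi)\eta_{j_1}(\tau-\xi^3)\widehat u$, $v_{k,j_2}=\eta_k(\xi)\eta_{j_2}(\tau-\xi^3)\widehat v$, and $2^{k_1}\sim 2^k$.

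The only source of smallness here is the resonance: since $\xi_1+\xi_2+\xi_3=0$ and $|\xi_i|\sim 2^k$, one has $|h(\xi)|\sim N_1N_2N_3\sim 2^{3k}$, hence $H\sim 2^{3k}$ and, by \eqref{eq:dyadic1}, $L_{max}\ges 2^{3k}$, i.e. $j_{max}\ge 3k-O(1)$. The decisive point is that $N_{max}\sim N_{min}$ rules out \eqref{eq:charii} of Proposition \ref{pchar}; only case (i) (when $L_{max}\sim H$) and case (iii) (otherwise) are available, and it is case (i), carrying the weaker factor $N_{max}^{-1/4}$ in place of $N_{max}^{-1}$, that dictates the final exponent $2^{-3k/4}$. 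This is exactly why the present bound is weaker than the $2^{-3k/2}$ obtained in the earlier $high\times high\rightarrow low$ propositions.

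Inserting $\norm{1_{D_{k_1,j}}u_{k,j_1}*v_{k,j_2}}_2\les\norm{\chi}\,\norm{u_{k,j_1}}_2\norm{v_{k,j_2}}_2$, with $\norm{\chi}$ the corresponding $[3;\R^2]$ multiplier norm, I would sum dyadically in the two regimes. When $L_{max}\sim H$, so $j_{max}\sim 3k$, estimate (i) gives $\norm{\chi}\les 2^{j_{min}/2}2^{-k/4}2^{j_{med}/4}$; writing $j_1+j_2=j_{min}+j_{med}+j_{max}-j$ and using $j_{max}\sim 3k$ yields the per-term bound $2^{k_1}2^{-j/2}\norm{\chi}\les 2^{-3k/4}2^{j_1/2}2^{j_2/2}$, and since $j\le j_{max}\sim 3k$ there are only $O(k)$ admissible $j$, so summing these and then $j_1,j_2$ against the $X_k$-norms gives $\les k\,2^{-3k/4}\norm{\widehat{P_ku}}_{X_k}\norm{\widehat{P_kv}}_{X_k}$. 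When $L_{max}\gg H$, estimate (iii) gives $\norm{\chi}\les 2^{j_{min}/2}2^{-k}\min(H,L_{med})^{1/2}=2^{j_{min}/2}2^{k/2}$; here $L_{max}\sim L_{med}$, so $j_{med}+j_{max}\ge 6k-O(1)$, and after cancelling $2^{-j_1/2-j_2/2}$ the surviving weight is $2^{3k/2}2^{-(j_{med}+j_{max})/2}$, which I factor as $2^{-3k/2}\,2^{-(j_{med}+j_{max}-6k)/2}$; the remaining decay makes the $j$-sum converge and produces the harmless contribution $\les k\,2^{-3k/2}\norm{\widehat{P_ku}}_{X_k}\norm{\widehat{P_kv}}_{X_k}$. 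The main obstacle is precisely that the comparable-frequency geometry forecloses \eqref{eq:charii} and forces reliance on \eqref{eq:chari}, fixing the gain at $2^{-3k/4}$; the attendant bookkeeping care is to use the hard constraint $j_{max}\sim 3k$ to bound the number of $j$'s in case (i), and to retain a little modulation decay rather than spend it all on the single-term bound in case (iii). The several $O(k)$ losses from the $j,j_1,j_2$ summations are absorbed into the (non-optimal) factor $k^3$ in the statement.
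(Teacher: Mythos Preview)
Your argument is correct and follows essentially the same route as the paper. The paper's proof is slightly more compressed: after noting $j_{max}\ge 2k+k_1-10\sim 3k$ and restricting to $j,j_1,j_2\le 10k$, it applies the bound \eqref{eq:chari} uniformly (this is legitimate because when $N_{max}\sim N_{min}$ the case-(iii) bound is always dominated by the case-(i) bound, as one checks by comparing $N_{max}^{-1}\min(H,L_{med})^{1/2}$ with $N_{max}^{-1/4}L_{med}^{1/4}$ using $H\sim N_{max}^3$), and then absorbs the triple sum over $O(k)$ values each into the $k^3$ factor. Your explicit split into $L_{max}\sim H$ versus $L_{max}\gg H$ is simply a more transparent accounting of the same estimate and in fact yields the sharper loss $k$ rather than $k^3$.
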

\begin{proof}
As before we assume $k=k_2$. From the definition of $X_{k_1}$, we
get
\begin{eqnarray}\label{eq:highhigh3}
\norm{(i+\tau-\xi^3)^{-1}\eta_{k_1}(\xi)i\xi\widehat{P_{k}u}*\widehat{P_kv}}_{X_{k_1}}\les
\ 2^{k_1}\sum_{j,j_1,j_2\geq
0}2^{-j/2}\norm{1_{D_{k_1,j}}u_{k,j_1}*v_{k,j_2}}_2,
\end{eqnarray}
where $u_{k,j_1},v_{k,j_2}$ are as in \eqref{eq:ukj}. For the same
reason as before we have $j_{max}\geq 2k+k_1-10$ and we may assume
$j,j_1,j_2\leq 10k$. It follows from \eqref{eq:chari} that the
right-hand side of \eqref{eq:highhigh3} is bounded by
\begin{eqnarray*}
&&\sum_{j,j_1,j_2\geq 0}2^{-j/2}
2^{k_1}2^{j_{min}/2}2^{-k/4}2^{j_{med}/4}\norm{u_{k,j_1}}_2\norm{v_{k,j_2}}_2\les
\
k^32^{-3k/4}\norm{\widehat{P_{k}u}}_{X_{k}}\norm{\widehat{P_kv}}_{X_k},
\end{eqnarray*}
which completes the proof of the proposition.
\end{proof}

The final case is $low\times low \ra low$ interaction. Generally
speaking, this case is always easy to handle in many situations.
\begin{proposition}\label{punil}
If $0\leq k_1,k_2,k_3\leq 100$, then for any $u,\ v \in F^s$
\begin{equation}
\norm{(i+\tau-\xi^3)^{-1}\eta_{k_1}(\xi)i\xi\widehat{P_{k_2}u}*\widehat{P_{k_3}v}}_{X_{k_1}}\les
\norm{\widehat{P_{k_2}u}}_{X_{k_2}}\norm{\widehat{P_{k_3}v}}_{X_{k_3}}.
\end{equation}
\end{proposition}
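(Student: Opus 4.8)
The plan is to run exactly the reduction used in the preceding high-high and high-low propositions, exploiting that here \emph{every} frequency is bounded, $2^{k_i}\lesssim 1$, so that there is no resonance singularity to fight. As before I assume $k=k_2$ by symmetry. First I would unfold the definition of $X_{k_1}$ and split both inputs into modulation pieces, writing $u_{k_2,j_2}=\eta_{k_2}(\xi)\eta_{j_2}(\tau-\xi^3)\widehat{u}$ and $v_{k_3,j_3}=\eta_{k_3}(\xi)\eta_{j_3}(\tau-\xi^3)\widehat{v}$. On $D_{k_1,j}$ one has $|i\xi|\lesssim 2^{k_1}\lesssim 1$ and $|(i+\tau-\xi^3)^{-1}|\lesssim 2^{-j}$, so the $X_{k_1}$ weight $2^{j/2}$ combines with the denominator to give a net $2^{-j/2}$, and the left-hand side is dominated by
\[
\norm{(i+\tau-\xi^3)^{-1}\eta_{k_1}(\xi)i\xi\widehat{P_{k_2}u}*\widehat{P_{k_3}v}}_{X_{k_1}}\les \sum_{j,j_2,j_3\ge 0}2^{-j/2}\norm{1_{D_{k_1,j}}\,u_{k_2,j_2}*v_{k_3,j_3}}_2 .
\]

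The second step is to apply the elementary $L^2$-convolution bound already invoked above (the one following from H\"older's and Young's inequalities),
\[
\norm{1_{D_{k_1,j}}\,u_{k_2,j_2}*v_{k_3,j_3}}_2\les 2^{k_1/2}\,2^{j_{min}/2}\norm{u_{k_2,j_2}}_2\norm{v_{k_3,j_3}}_2 ,
\]
where $j_{min}=\min(j,j_2,j_3)$. Since $k_1\le 100$, the frequency factor $2^{k_1/2}$ is an absolute constant and may be dropped. Setting $a_{j_2}=\norm{u_{k_2,j_2}}_2$ and $b_{j_3}=\norm{v_{k_3,j_3}}_2$, and noting that the target right-hand side equals $\big(\sum_{j_2}2^{j_2/2}a_{j_2}\big)\big(\sum_{j_3}2^{j_3/2}b_{j_3}\big)$, it then remains to prove the purely numerical inequality
\[
\sum_{j,j_2,j_3\ge 0}2^{-j/2}2^{j_{min}/2}a_{j_2}b_{j_3}\les\Big(\sum_{j_2}2^{j_2/2}a_{j_2}\Big)\Big(\sum_{j_3}2^{j_3/2}b_{j_3}\Big).
\]

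Finally I would carry out this modulation summation by splitting according to which index realizes $j_{min}$. When $j_2=j_{min}$ or $j_3=j_{min}$, summing $2^{-j/2}$ over $j\ge j_{min}$ gives $\lesssim 2^{-j_{min}/2}$, which cancels the gain $2^{j_{min}/2}$ and leaves $\sum_{j_2,j_3}a_{j_2}b_{j_3}$, trivially controlled by the target. The only mildly delicate case is $j=j_{min}$: there $2^{-j/2}2^{j/2}=1$, and the sum over $0\le j\le \min(j_2,j_3)$ produces a factor $\min(j_2,j_3)+1\le 2^{\min(j_2,j_3)}\le 2^{j_2/2}2^{j_3/2}$, which is absorbed by the two weights. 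Thus the main — and really only — obstacle is this harmless bookkeeping of the modulation sum; there is no genuine difficulty, because the resonance identity forces $H\sim N_1N_2N_3\lesssim 1$, so $(i+\tau-\xi^3)^{-1}$ creates no singularity and no loss in $s$ occurs. This is precisely why the $low\times low\ra low$ interaction, unlike the $high\times high\ra low$ case, imposes no restriction on the regularity.
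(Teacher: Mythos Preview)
Your argument is correct, and it is slightly different from the paper's. The paper first invokes the resonance constraint: since all $|\xi_i|\les 2^{100}$, the identity $(\tau_1-\xi_1^3)+(\tau_2-\xi_2^3)+(\tau_3-\xi_3^3)=-3\xi_1\xi_2\xi_3$ forces either $j_{max}\le 1000$ or $|j_{max}-j_{med}|\le 10$, and then applies Young's inequality placing one \emph{input} in $L^1$ to get $\norm{1_{D_{k_1,j}}u_{k_2,j_2}*v_{k_3,j_3}}_2\les 2^{j_i/2}\norm{u_{k_2,j_2}}_2\norm{v_{k_3,j_3}}_2$ for $i\in\{2,3\}$; the constraint $j_{max}\sim j_{med}$ then makes the triple sum close. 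You instead use the duality form of the same H\"older/Young bound with $2^{j_{min}/2}$ (exactly the ``simple estimate'' quoted earlier in the paper) and sum the modulations directly, without ever appealing to the resonance constraint in the summation. Your case split on which index realizes $j_{min}$, and the estimate $\min(j_2,j_3)+1\le 2^{j_2/2}2^{j_3/2}$ in the $j=j_{min}$ case, are both sound. One cosmetic point: the sentence ``assume $k=k_2$ by symmetry'' is a leftover from the preceding propositions and has no meaning here, since the statement already has three free indices $k_1,k_2,k_3$; just drop it.
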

\begin{proof}
From the definition of $X_{k_1}$, we get that
\begin{eqnarray}\label{eq:highhigh3}
\norm{(i+\tau-\xi^3)^{-1}\eta_{k_1}(\xi)i\xi\widehat{P_{k_2}u}*\widehat{P_{k_3}v}}_{X_{k_1}}\les
\ 2^{k_1}\sum_{j,j_1,j_2\geq
0}2^{-j/2}\norm{1_{D_{k_1,j}}u_{k_2,j_1}*v_{k_3,j_2}}_2,
\end{eqnarray}
where $u_{k_2,j_1},v_{k_3,j_2}$ are as in \eqref{eq:ukj}. By
checking the support properties of the function
$u_{k_2,j_1},v_{k_3,j_2}$, we get that
$1_{D_{k_1,j}}u_{k_2,j_1}*v_{k_3,j_2}\equiv 0$ unless
$|j_{max}-j_{med}|\leq 10$ or $j_{max}\leq 1000$ where $j_{max},
j_{med}$ are the maximum and median of $j,j_1,j_2$ respectively. It
follows immediately from Young's inequality that
\begin{equation}
\norm{1_{D_{k,j}} u_{k_1,j_1}*v_{k_2,j_2}}_{L_{\xi,\tau}^2}\les
2^{k_i}2^{j_i}\norm{u_{k_1,j_1}}_2\norm{v_{k_2,j_2}}_2,\ i=1,2.
\end{equation}
From definition and summing in $j_i$, we complete the proof of the
proposition.
\end{proof}

With these propositions in hand, we are able to prove the bilinear
estimate. The idea is to decompose the bilinear product using
para-product, and then divide it into many cases according to the
interactions. Finally we use discrete Young's inequality.
\begin{proposition}\label{p412}
Fix any $s\in (-3/4, 0]$, $\forall s\leq \sigma \leq 0$, there
exists $C>0$ such that for any $u,v\in F^\sigma$,
\begin{equation}\label{eq:biuni}
\norm{\partial_x(uv)}_{N^\sigma}\leq
C(\norm{u}_{F^s}\norm{v}_{F^\sigma}+\norm{v}_{F^s}\norm{u}_{F^\sigma}).
\end{equation}
\end{proposition}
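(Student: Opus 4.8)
The plan is to reduce the estimate to the dyadic building blocks established in Propositions \ref{punif}--\ref{punil} and then carry out the frequency summation. First I would unfold the definition of the $N^\sigma$ norm, writing
\[
\norm{\partial_x(uv)}_{N^\sigma}^2=\sum_{k\in\Z_+}2^{2\sigma k}\norm{(i+\tau-\xi^3)^{-1}\eta_k(\xi)\,\ft(\partial_x(uv))}_{X_k}^2,
\]
and insert the Littlewood--Paley decompositions $u=\sum_{k_2}P_{k_2}u$, $v=\sum_{k_3}P_{k_3}v$. Since $\ft(\partial_x(uv))=i\xi\,\widehat{u}*\widehat{v}$, each output block at frequency $2^k$ is a sum of the elementary pieces $(i+\tau-\xi^3)^{-1}\eta_k(\xi)i\xi\,\widehat{P_{k_2}u}*\widehat{P_{k_3}v}$. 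Because $\xi=\xi_1+\xi_2$ on the support, the convolution vanishes unless the two largest of $k,k_2,k_3$ are comparable; this splits the sum into four regimes---$low\times low\ra low$, $low\times high\ra high$ (and its mirror), $high\times high\ra low$, and $high\times high\ra high$---each governed by exactly one of the preceding propositions.

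Next I would abbreviate $a_k=2^{sk}\norm{P_ku}_{X_k}$ and $b_k=2^{\sigma k}\norm{P_kv}_{X_k}$, so that $\norm{u}_{F^s}=\norm{a}_{\ell^2_k}$ and $\norm{v}_{F^\sigma}=\norm{b}_{\ell^2_k}$, and similarly with the roles of $u,v$ exchanged to produce the second term on the right-hand side. Crucially, the propositions already perform the $\ell^1$-summation in the modulation index $j$ internally and deliver clean bounds on the $X_k$-norms; hence at this stage only the $\ell^2$ summation over the frequency indices remains. In each regime I would substitute the corresponding dyadic bound, convert the $X_k$-norms of the inputs into $a$'s and $b$'s (picking up factors $2^{-sk}$, $2^{-\sigma k}$), and assign the $F^s$-factor to whichever input makes the weights match the output. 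For instance, in the $low\times high\ra high$ regime the output weight $2^{\sigma k}$ cancels against the $2^{-\sigma k}$ of the high input, the gain $k^32^{-k/2}2^{-k_1}$ (or simply a constant when the low input is $P_0$) is summable in the low index once $s>-1$, and a discrete Young/Cauchy--Schwarz inequality closes the estimate by $\norm{a}_{\ell^2}\norm{b}_{\ell^2}$.

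The decisive regime, and the one I expect to be the main obstacle, is $high\times high\ra low$. Here the output sits at a low frequency $2^{k_1}$ while both inputs live at a common high frequency $2^k$, so the output weight $2^{\sigma k_1}$ offers no help and everything must be extracted from the dyadic gain $k^32^{-3k/2}$. After converting the input norms one is led to control
\[
\Big(\sum_{k_1}\Big(2^{\sigma k_1}\sum_{k\ges k_1}k^3\,2^{(-3/2-s-\sigma)k}a_kb_k\Big)^2\Big)^{1/2}.
\]
The inner sum over the high frequency $k$ converges only when $-3/2-s-\sigma<0$, i.e. $s+\sigma>-3/2$; since $\sigma\ges s$ the worst instance is $\sigma=s$, which is precisely the hypothesis $s>-3/4$. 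Granting this, the geometric decay in $k$ dominates the polynomial factor $k^3$ and the residual $2^{\sigma k_1}$-growth in the low index, and a final application of Cauchy--Schwarz in $k$ followed by summation in $k_1$ yields the bound $\norm{a}_{\ell^2}\norm{b}_{\ell^2}=\norm{u}_{F^s}\norm{v}_{F^\sigma}$. The $high\times high\ra high$ regime is handled identically, its gain $k^32^{-3k/4}$ requiring the same threshold $s>-3/4$, while the two remaining regimes are strictly easier; collecting the four contributions and using symmetry to generate the second term completes the proof.
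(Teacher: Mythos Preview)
Your proposal is correct and follows essentially the same route as the paper: unfold the $N^\sigma$ norm, insert Littlewood--Paley decompositions, use the support constraint $|k_{\max}-k_{\rm med}|\le 5$ to split into the four interaction regimes $A_1$--$A_4$, apply Propositions \ref{punif}--\ref{punil} in each, and close with a discrete Young/Cauchy--Schwarz summation that pinpoints $s>-3/4$ as the threshold coming from the $high\times high$ cases. The paper's own proof is terser (it simply cites the dyadic propositions, ``discrete Young's inequality and the assumption that $s>-3/4$''), whereas you spell out the $\ell^2$ summation explicitly---but the argument is the same.
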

\begin{proof}
In view of definition, we get that
\begin{equation}
\norm{\partial_x(uv)}_{N^\sigma}^2=\sum_{k_3\in \Z_+}2^{2\sigma
k_3}\norm{(i+\tau-\xi^3)^{-1}\eta_{k_3}(\xi)i\xi\widehat{u}*\widehat{v}}_{X_{k_3}}^2.
\end{equation}
We decompose $\widehat{u}, \widehat{v}$ and get
\begin{eqnarray}\label{eq:biunipara}
\norm{(i+\tau-\xi^3)^{-1}\eta_{k_3}(\xi)i\xi\widehat{u}*\widehat{v}}_{X_{k_3}}\les
\sum_{k_1,k_2\in \Z_+}
\norm{(i+\tau-\xi^3)^{-1}\eta_{k_3}(\xi)i\xi\widehat{P_{k_1}u}*\widehat{P_{k_2}v}}_{X_{k_3}}.
\end{eqnarray}
By checking the support properties we get that
$\eta_{k_3}(\xi)\widehat{P_{k_1}u}*\widehat{P_{k_2}v}\equiv 0$
unless $|k_{max}-k_{med}|\leq 5$ where $k_{max}, k_{med}$ are the
maximum and median of $k_1,k_2,k_3$ respectively. We may assume that
$k_1\leq k_2$ from symmetry. By dividing the summation into
$high\times high$, $high\times low$ four parts, we get that the
right-hand side of \eqref{eq:biunipara} is bounded by
\begin{eqnarray}
\big(\sum_{j=1}^4\sum_{k_1,k_2\in A_j}\big)
\norm{(i+\tau-\xi^3)^{-1}\eta_{k_3}(\xi)i\xi\widehat{P_{k_1}u}*\widehat{P_{k_2}v}}_{X_{k_3}},
\end{eqnarray}
where $A_j$, $j=1,2,3,4$ are defined by
\begin{eqnarray*}
&&A_1=\{k_2\geq 10, |k_2-k_3|\leq 5, k_1\leq k_2-10\};\\
&&A_2=\{k_2\geq 10, |k_2-k_3|\leq 5, k_2-9\leq k_1\leq k_2+10\};\\
&&A_3=\{k_2\geq 10, |k_2-k_1|\leq 5, k_3\leq k_1-10\};\\
&&A_4=\{k_1,k_2,k_3\leq 100\}.
\end{eqnarray*}
Therefore, \eqref{eq:biuni} from the Proposition
\ref{punif}-\ref{punil}, discrete Young's inequality and the
assumption that $s>-3/4$.
\end{proof}

We next show \eqref{eq:kdvb} is uniformly (on $0<\epsilon \leq 1$)
locally well-posed in $H^s$, $-3/4<s\leq 0$. The procedure is quite
standard. See \cite{KPV}, for instance. By the scaling
\eqref{eq:scaling}, we see that $u$ solves \eqref{eq:kdvb} if and
only if $u_\lambda (x,t)= \lambda^2 u(\lambda x, \lambda^3 t)$
solves
\begin{eqnarray}\label{eq:kdvb-m}
\partial_t u_\lambda + \partial^3_x u_{\lambda}+\epsilon \lambda^{3-2\alpha} |\partial_x|^{2\alpha}u_\lambda+\partial_x (u^2_\lambda)=0, \ \
u_\lambda (0)= \lambda^2\phi (\lambda \, \cdot).
\end{eqnarray}
Since $-3/4<s\leq 0$,
\begin{equation}
\norm{\lambda^2\phi(\lambda
x)}_{H^s}=O(\lambda^{3/2+s}\norm{\phi}_{H^s}) \quad \mbox{as }
\lambda\rightarrow 0,
\end{equation}
thus we can first restrict ourselves to considering \eqref{eq:kdvb}
with data $\phi$ satisfying
\begin{equation}
\norm{\phi}_{H^s}=r\ll 1.
\end{equation}

As in the last section, we will mainly work on the integral equation
\eqref{eq:trunintekdvbuni}. We define the operator
\begin{equation}
\Phi_{\phi}(u)=\psi(t)W_\epsilon^\alpha(t)\phi- \psi(t)L
\big(\partial_x (\psi^2 u^2)\big),
\end{equation}
where $L$ is defined by \eqref{defL}. We will prove that $\Phi_\phi
(\cdot)$ is a contraction  mapping from
\begin{equation}
\mathcal{B}=\{w\in F^s:\ \norm{w}_{F^s}\leq 2cr\}
\end{equation}
into itself. From Propositions \ref{p42}, \ref{p43} and \ref{p45} we
get if $w\in \mathcal{B}$, then
\begin{eqnarray}
\norm{\Phi_\phi(w)}_{F^s}&\leq&
c\norm{\phi}_{H^s}+\norm{\partial_x(\psi(t)^2w^2(\cdot, t))}_{N^s}\nonumber\\
&\leq& cr+c\norm{w}_{F^s}^2\leq cr+c(2cr)^2\leq 2cr,
\end{eqnarray}
provided $r$ satisfies $4c^2r\leq 1/2$. Similarly, for $w, h\in
\mathcal{B}$
\begin{eqnarray}
\norm{\Phi_\phi(w)-\Phi_\phi(h)}_{F^s}
&\leq& c\normo{ L \partial_x(\psi^2(\tau)(u^2(\tau)-h^2(\tau)))}_{F^s}\nonumber\\
&\leq&c\norm{w+h}_{F^s}\norm{w-h}_{F^s}\nonumber\\
&\leq&4c^2r\norm{w-h}_{F^s}\leq \frac{1}{2}\norm{w-h}_{F^s}.
\end{eqnarray}
Thus $\Phi_\phi(\cdot)$ is a contraction. There exists a unique
$u\in \mathcal{B}$ such that
\begin{equation}
u=\psi(t)W_\epsilon^\alpha(t)\phi- \psi(t)L \big(\partial_x (\psi^2
u^2)\big).
\end{equation}
Hence $u$ solves the integral equation \eqref{eq:intekdvb} in the
time interval $[0,1]$.

We prove now that $u\in X^{1/2,s,\alpha}$. Indeed, from the slightly
modified argument as the proof for Proposition 2.1, 2.3 \cite{MR},
we can show that
\begin{eqnarray*}
&&\norm{\psi(t)W_\epsilon^\alpha(t)\phi}_{X^{1/2,s,\alpha}}\les
\norm{\phi}_{H^s};\\
&&\norm{\psi(t)L(v)}_{X^{1/2,s,\alpha}}\les
\norm{v}_{X^{-1/2,s,\alpha}}+\brk{\int\jb{\xi}^{2s}\big(\int
\frac{|\widehat{v}(\tau)|}{\jb{i\tau+\epsilon|\xi|^{2\alpha}}}d\tau\big)^2d\xi}^{1/2}\les
\norm{v}_{N^s},
\end{eqnarray*}
which then imply $u\in X^{1/2,s,\alpha}$, as desired. For general
$\phi \in H^s$, by using the scaling \eqref{eq:scaling} and the
uniqueness in Theorem \ref{t11}, we immediately obtain that Theorem
\ref{t12} holds for a small $T=T(\norm{\phi}_{H^s})>0$.

\section{Uniform global well-posedness for KdV-B equation}
In this section we will extend the uniform local solution obtained
in the last section to a uniform global solution. The standard way
is to use conservation law. Let $u$ be a smooth solution of
\eqref{eq:kdvb}, multiply $u$ and integrate, then we get
\begin{equation}\label{eq:L2law}
\half{1}\norm{u(t)}_2^2+\epsilon \int_0^t\norm{\Lambda^\alpha
u(\tau)}_2^2d\tau=\half{1}\norm{\phi}_2^2.
\end{equation}
By a standard limit argument, \eqref{eq:L2law} holds for
$L^2$-strong solution. Thus if $\phi\in L^2$, then we get that
\eqref{eq:kdvb} is uniformly globally well-posed.

For $\phi\in H^s$ with $-3/4<s<0$, there is no such conservation
law. We will follow the idea in \cite{Tao2} (I-method) to extend the
solution. Let $m: \R^k \rightarrow \C$ be a function. We say $m$ is
symmetric if $m(\xi_1,\ldots, \xi_k)=m(\sigma(\xi_1,\ldots, \xi_k))$
for all $\sigma \in S_k$, the group of all permutations on $k$
objects. The symmetrization of $m$ is the function
\begin{equation}
[m]_{sym}(\xi_1,\xi_2,\ldots, \xi_k)=\rev{k!}\sum_{\sigma\in
S_k}m(\sigma(\xi_1,\xi_2,\ldots,\xi_k)).
\end{equation}
We define a $k-linear$ functional associated to the multiplier $m$
acting on $k$ functions $u_1,\ldots,u_k$,
\begin{equation}
\Lambda_k(m;u_1,\ldots,u_k)=\int_{\xi_1+\ldots+\xi_k=0}m(\xi_1,\ldots,\xi_k)\widehat{u_1}(\xi_1)\ldots
\widehat{u_k}(\xi_k).
\end{equation}
We will often apply $\Lambda_k$ to $k$ copies of the same function
$u$. $\Lambda_k(m;u,\ldots,u)$ may simply be written $\Lambda_k(m)$.
By the symmetry of the measure on hyperplane, we have
$\Lambda_k(m)=\Lambda_k([m]_{sym})$.

The following statement may be directly verified by using the KdV-B
equation \eqref{eq:kdvb}. Compared to the KdV equation, the KdV-B
equation has one more term caused by the dissipation.
\begin{proposition}
Suppose $u$ satisfies the KdV-B equation \eqref{eq:kdvb} and that
$m$ is a symmetric function. Then
\begin{equation}\label{eq:menergy}
\frac{d}{dt}\Lambda_k(m)=\Lambda_k(mh_k)-\epsilon
\Lambda_k(m\beta_{\alpha,k})-i\half k
\Lambda_{k+1}(m(\xi_1,\ldots,\xi_{k-1},\xi_k+\xi_{k+1})(\xi_k+\xi_{k+1})),
\end{equation}
where
\[h_k=i(\xi_1^3+\xi_2^3+\ldots+\xi_k^3), \quad \beta_{\alpha,k}=|\xi_1|^{2\alpha}+|\xi_2|^{2\alpha}+\ldots+|\xi_k|^{2\alpha}.\]
\end{proposition}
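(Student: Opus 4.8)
The plan is to differentiate
\[
\Lambda_k(m)=\int_{\xi_1+\ldots+\xi_k=0}m(\xi_1,\ldots,\xi_k)\,\widehat{u}(\xi_1)\cdots\widehat{u}(\xi_k)
\]
directly in $t$ and to substitute the equation into each factor $\partial_t\widehat{u}$. Taking the spatial Fourier transform of \eqref{eq:kdvb} and using $(i\xi)^3=-i\xi^3$ together with $\widehat{(u^2)_x}(\xi)=i\xi\,(\widehat u\ast\widehat u)(\xi)$ gives
\begin{equation}
\partial_t\widehat{u}(\xi)=i\xi^3\widehat{u}(\xi)-\epsilon|\xi|^{2\alpha}\widehat{u}(\xi)-i\xi\,(\widehat u\ast\widehat u)(\xi).
\end{equation}
Since the hyperplane $\{\xi_1+\ldots+\xi_k=0\}$ and its symmetric measure do not depend on $t$, the product rule yields $\frac{d}{dt}\Lambda_k(m)=\sum_{j=1}^k\int_{\sum\xi=0}m\,\widehat u(\xi_1)\cdots\partial_t\widehat u(\xi_j)\cdots\widehat u(\xi_k)$, and I would insert the three terms above one at a time.

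First I would dispose of the two contributions that keep the functional $k$-linear. Summing the dispersive term over $j$ produces the multiplier $m\sum_{j}i\xi_j^3=m\,h_k$, and summing the dissipative term produces $-\epsilon\,m\sum_j|\xi_j|^{2\alpha}=-\epsilon\,m\,\beta_{\alpha,k}$; collecting these gives exactly $\Lambda_k(mh_k)-\epsilon\Lambda_k(m\beta_{\alpha,k})$, the first two terms on the right of \eqref{eq:menergy}. These steps are immediate once the Fourier symbols are identified, and no cancellation or symmetrization is needed.

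The substantive part is the nonlinear contribution $\sum_{j}\int_{\sum\xi=0}m\,\widehat u(\xi_1)\cdots\bigl[-i\xi_j(\widehat u\ast\widehat u)(\xi_j)\bigr]\cdots\widehat u(\xi_k)$. For fixed $j$ I would expand the convolution $(\widehat u\ast\widehat u)(\xi_j)=\int\widehat u(\eta)\widehat u(\xi_j-\eta)\,d\eta$ and relabel, writing $\xi_j$ as a sum of two new frequency variables so that the old constraint $\xi_1+\ldots+\xi_k=0$ turns into $\xi_1+\ldots+\xi_{k+1}=0$; thus each summand becomes a $(k+1)$-linear functional whose multiplier is $m$ with its $j$-th slot split and weighted by the corresponding frequency. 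The key step is then to observe that, because the measure on $\{\xi_1+\ldots+\xi_{k+1}=0\}$ is invariant under permutations and all $k+1$ arguments carry the same function $u$, each summand equals $\Lambda_{k+1}([\,\cdot\,]_{sym})$ of one common symmetric multiplier; invoking $\Lambda_{k+1}(M)=\Lambda_{k+1}([M]_{sym})$ I may replace every summand by the representative in which the split occurs in the last two slots, namely $m(\xi_1,\ldots,\xi_{k-1},\xi_k+\xi_{k+1})(\xi_k+\xi_{k+1})$. Carrying out this symmetrization, and with it the accompanying combinatorial count over the $k$ choices of $j$, is the main (and essentially the only) obstacle; it collapses the sum into the single term $-i\tfrac{k}{2}\Lambda_{k+1}\bigl(m(\xi_1,\ldots,\xi_{k-1},\xi_k+\xi_{k+1})(\xi_k+\xi_{k+1})\bigr)$, which completes \eqref{eq:menergy}.
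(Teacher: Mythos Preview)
The paper does not give a proof of this proposition at all; it simply states that the identity ``may be directly verified by using the KdV-B equation.'' Your proposal \emph{is} that direct verification: differentiate $\Lambda_k(m)$ under the integral, substitute $\partial_t\widehat u$ from the Fourier-side equation, read off $h_k$ and $\beta_{\alpha,k}$ from the linear symbols, and unfold the convolution in the nonlinear term into a $\Gamma_{k+1}$ integral, exploiting the symmetry of $m$ and of the hyperplane measure to reduce every $j$-summand to the representative with the split in the last two slots. This is precisely the standard argument (as in \cite{Tao2}) and is what the paper has in mind.

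One small point worth making explicit in your write-up: you assert that the sum over the $k$ choices of $j$ ``collapses'' to the coefficient $k/2$, but your own argument actually shows that each of the $k$ summands equals $\Lambda_{k+1}\bigl(m(\xi_1,\ldots,\xi_{k-1},\xi_k+\xi_{k+1})(\xi_k+\xi_{k+1})\bigr)$, which produces a factor $k$, not $k/2$. The extra $1/2$ does not come from the symmetrization count; it comes from the normalization of the nonlinearity (in \cite{Tao2} the equation carries $\tfrac12\partial_x(u^2)=uu_x$, and the proposition here is stated in that convention). You should track that constant explicitly rather than fold it into the combinatorics; the rest of the argument is fine.
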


We follow the I-method \cite{Tao2} to define a set of modified
energies. Let $m:\R\rightarrow \R$ be an arbitrary even $\R$-valued
function and define the operator by
\begin{equation}
\widehat{If}(\xi)=m(\xi)\widehat{f}(\xi).
\end{equation}
We define the modified energy $E_I^2(t)$ by
\begin{equation}
E_I^2(t)=\norm{Iu(t)}_{L^2}^2.
\end{equation}
By Plancherel and the fact that $m$ and $u$ are $\R$-valued, and $m$
is even,
\[E_I^2(t)=\Lambda_2(m(\xi_1)m(\xi_2)).\]
Using \eqref{eq:menergy}, we have
\begin{eqnarray}
\frac{d}{dt}E_I^2(t)&=&\Lambda_2(m(\xi_1)m(\xi_2)h_2)-\epsilon\Lambda_2(m(\xi_1)m(\xi_2)\beta_{\alpha,2})\nonumber\\
&&-i\Lambda_3(m(\xi_1)m(\xi_2+\xi_3)(\xi_2+\xi_3)).
\end{eqnarray}
The first term vanishes. The second term is non-positive, hence
good. We symmetrize the third term to get
\begin{equation}
\frac{d}{dt}E_I^2(t)=-\epsilon\Lambda_2(m(\xi_1)m(\xi_2)\beta_{\alpha,2})+\Lambda_3(-i[m(\xi_1)m(\xi_2+\xi_3)(\xi_2+\xi_3)]_{sym}).
\end{equation}
Let us denote
\begin{equation}
M_3(\xi_1,\xi_2,\xi_3)=-i[m(\xi_1)m(\xi_2+\xi_3)(\xi_2+\xi_3)]_{sym}.
\end{equation}
Form the new modified energy
\[E_I^3(t)=E_I^2(t)+\Lambda_3(\sigma_3)\]
where the symmetric function $\sigma_3$ will be chosen momentarily
to achieve a cancellation. Applying \eqref{eq:menergy} gives
\begin{eqnarray}\label{eq:E3}
\frac{d}{dt}E_I^3(t)&=&-\epsilon\Lambda_2(m(\xi_1)m(\xi_2)\beta_{\alpha,2})+\Lambda_3(M_3)\nonumber\\
&&+\Lambda_3(\sigma_3h_3)-\epsilon
\Lambda_3(\sigma_3\beta_{\alpha,3})-\half 3
i\Lambda_4(\sigma_3(\xi_1,\xi_2,\xi_3+\xi_4)(\xi_3+\xi_4)).
\end{eqnarray}
Compared to the KdV case \cite{Tao2}, there is one more term to
cancel, so we choose
\begin{equation}
\sigma_3=-\frac{M_3}{h_3-\epsilon \beta_{\alpha, 3}}
\end{equation}
to force the three $\Lambda_3$ terms in \eqref{eq:E3} to cancel.
Hence if we denote
\begin{equation}
M_4(\xi_1,\xi_2,\xi_3,\xi_4)=-i\half
3[\sigma_3(\xi_1,\xi_2,\xi_3+\xi_4)(\xi_3+\xi_4)]_{sym}
\end{equation}
then
\begin{eqnarray}
\frac{d}{dt}E_I^3(t)=-\epsilon\Lambda_2(m(\xi_1)m(\xi_2)\beta_{\alpha,2})+\Lambda_4(M_4).
\end{eqnarray}
Similarly defining
\[E_I^4(t)=E_I^3(t)+\Lambda_4(\sigma_4)\]
with
\begin{equation}
\sigma_4=-\frac{M_4}{h_4-\epsilon \beta_{\alpha,4}},
\end{equation}
we obtain
\begin{eqnarray}
\frac{d}{dt}E_I^4(t)=-\epsilon\Lambda_2(m(\xi_1)m(\xi_2)\beta_{\alpha,2})+\Lambda_5(M_5)
\end{eqnarray}
where
\begin{equation}
M_5(\xi_1,\ldots,\xi_5)=-2i[\sigma_4(\xi_1,\xi_2,\xi_3,\xi_4+\xi_5)(\xi_4+\xi_5)]_{sym}.
\end{equation}

Now we give pointwise bounds for the multipliers. We will only be
interested in  the value of the multiplier on the hyperplane
$\xi_1+\xi_2+\ldots+\xi_k=0$. There is a flexibility of choosing the
multiplier $m$. In application, we consider $m(\xi)$ is smooth,
monotone, and of the form
\begin{eqnarray}\label{eq:Imul}
m(\xi)=\left \{
\begin{array}{l}
1, \quad |\xi|<N,\\
N^{-s}|\xi|^s,\quad |\xi|>2N.
\end{array}
\right.
\end{eqnarray}
It is easy to see that if $m$ is of the form \eqref{eq:Imul}, then
$m^2$ satisfies
\begin{eqnarray}\label{eq:eImul}
&&m^2(\xi)\sim m^2(\xi') \mbox{ for } |\xi|\sim|\xi'|,\nonumber\\
&&(m^2)'(\xi)=O(\frac{m^2(\xi)}{|\xi|}),\nonumber\\
&&(m^2)''(\xi)=O(\frac{m^2(\xi)}{|\xi|^2}).
\end{eqnarray}

We will need two mean value formulas which follow immediately from
the fundamental theorem of calculus. If $|\eta|,|\lambda|\ll |\xi|$,
then we have
\begin{equation}\label{eq:mvt}
|a(\xi+\eta)-a(\xi)|\les |\eta|\sup_{|\xi'|\sim |\xi|}|a'(\xi')|,
\end{equation}
and the double mean value formula that
\begin{equation}\label{eq:dmvt}
|a(\xi+\eta+\lambda)-a(\xi+\eta)-a(\xi+\lambda)+a(\xi)|\les
|\eta||\lambda|\sup_{|\xi'|\sim |\xi|}|a''(\xi')|.
\end{equation}

\begin{proposition}
If $m$ is of the form \eqref{eq:Imul}, then for each dyadic
$\lambda\leq \mu$ there is an extension of $\sigma_3$ from the
diagonal set
\[\{(\xi_1,\xi_2,\xi_3)\in \Gamma_3(\R), |\xi_1|\sim \lambda, |\xi_2|, |\xi_3|\sim \mu\}\]
to the full dyadic set
\[\{(\xi_1,\xi_2,\xi_3)\in \R^3, |\xi_1|\sim \lambda, |\xi_2|, |\xi_3|\sim \mu\}\]
which satisfies
\begin{equation}\label{eq:m3}
|\partial_1^{\beta_1}\partial_2^{\beta_2}\partial_3^{\beta_3}\sigma_3(\xi_1,\xi_2,\xi_3)|\leq
C m^2(\lambda)\mu^{-2}\lambda^{-\beta_1}\mu^{-\beta_2-\beta_3},
\end{equation}
where $C$ is independent of $\epsilon$.
\end{proposition}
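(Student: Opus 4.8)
The plan is to reduce the whole estimate to the restriction of $\sigma_3$ to the hyperplane $\Gamma_3(\R)$, where $\xi_1+\xi_2+\xi_3=0$ makes the algebra explicit, and then to extend off the hyperplane by a symbol-preserving device. First I would use $\xi_1+\xi_2+\xi_3=0$, which forces $\xi_1^3+\xi_2^3+\xi_3^3=3\xi_1\xi_2\xi_3$ and $\xi_j+\xi_k=-\xi_i$, to compute on the diagonal set
\[\sigma_3=-\frac i3\,\frac{N}{D},\qquad N=\xi_1m^2(\xi_1)+\xi_2m^2(\xi_2)+\xi_3m^2(\xi_3),\qquad D=3i\xi_1\xi_2\xi_3-\epsilon\beta_{\alpha,3}.\]
Writing $f(\xi)=\xi m^2(\xi)$, which is odd, the numerator is $N=f(\xi_1)+f(\xi_2)+f(\xi_3)$.

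Two structural facts drive everything. First, the dissipative part of $D$ is real and the dispersive part purely imaginary, so $|D|\ges 3|\xi_1\xi_2\xi_3|\ges\lambda\mu^2$ on the whole dyadic box $\{|\xi_1|\sim\lambda,\ |\xi_2|,|\xi_3|\sim\mu\}$; crucially this lower bound does not see $\epsilon$, which is precisely what yields a constant independent of $\epsilon$. Second, since $f$ is odd and $\xi_1+\xi_2+\xi_3=0$, the numerator cancels: $f(\xi_2)+f(\xi_3)=f(\xi_2)-f(\xi_1+\xi_2)$, and \eqref{eq:mvt} together with \eqref{eq:eImul} gives $|f(\xi_2)+f(\xi_3)|\les\lambda m^2(\mu)$, hence $|N|\les\lambda m^2(\lambda)+\lambda m^2(\mu)\les\lambda m^2(\lambda)$ (the subcase $\lambda\sim\mu$ being trivial, all frequencies sitting at one scale). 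Combining the two already yields the $\beta_1=\beta_2=\beta_3=0$ bound $|\sigma_3|\les m^2(\lambda)\mu^{-2}$.

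For the derivatives I would treat $\sigma_3$ as a product of two symbols. I would first check that $D$ is a symbol of size $|D|$ adapted to the scales $(\lambda;\mu,\mu)$, namely $|\partial_1^{\beta_1}\partial_2^{\beta_2}\partial_3^{\beta_3}D|\les|D|\lambda^{-\beta_1}\mu^{-\beta_2-\beta_3}$, by distributing derivatives over the monomial $3i\xi_1\xi_2\xi_3$ and over each $\epsilon|\xi_j|^{2\alpha}$, using $\epsilon|\xi_j|^{2\alpha}\le\epsilon\beta_{\alpha,3}\le|D|$ to absorb the dissipative derivatives; with $|D|\ges\lambda\mu^2$ this gives, by the reciprocal-of-a-symbol rule, $|\partial^\beta(1/D)|\les(\lambda\mu^2)^{-1}\lambda^{-\beta_1}\mu^{-\beta_2-\beta_3}$, uniformly in $\epsilon$. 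The extension is then placed on the numerator: I would extend $N|_{\Gamma_3}$ to the full box as a function of $\xi_1$ and $\xi_2-\xi_3$ only (constant in the $\xi_2+\xi_3$ direction), so that it agrees with $N$ on $\Gamma_3$ and, in the coordinates $\xi_1$ (scale $\lambda$) and $\zeta=\tfrac12(\xi_2-\xi_3)$ (scale $\mu$), every derivative reduces to $\partial_{\xi_1}^{j}\partial_\zeta^{k}\big(f(\xi_1)+f(\tfrac{-\xi_1}2+\zeta)+f(\tfrac{-\xi_1}2-\zeta)\big)$ via $\partial_1=\partial_{\xi_1}$ and $\partial_2,\partial_3=\pm\tfrac12\partial_\zeta$. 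Each $\partial_\zeta$ acts on the pair $f(\tfrac{-\xi_1}2+\zeta)+f(\tfrac{-\xi_1}2-\zeta)$ whose arguments are opposite up to the shift $O(\lambda)$; the parity of $f^{(k)}$ turns the two terms into a difference, and one mean value step (again \eqref{eq:mvt}, or \eqref{eq:dmvt} when two derivatives are spent) recovers the factor $\lambda$, giving $|\partial_{\xi_1}^{j}\partial_\zeta^{k}\widetilde N|\les\lambda m^2(\lambda)\lambda^{-j}\mu^{-k}$, i.e. $\widetilde N$ is a symbol of size $\lambda m^2(\lambda)$. Leibniz's rule applied to $\widetilde\sigma_3=-\tfrac i3\widetilde N/D$ then produces $|\partial^\beta\widetilde\sigma_3|\les\lambda m^2(\lambda)(\lambda\mu^2)^{-1}\lambda^{-\beta_1}\mu^{-\beta_2-\beta_3}=m^2(\lambda)\mu^{-2}\lambda^{-\beta_1}\mu^{-\beta_2-\beta_3}$.

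The main obstacle is exactly the propagation of the numerator cancellation through differentiation, that is, showing that $\widetilde N$ is a genuine symbol of size $\lambda m^2(\lambda)$ at the two scales. This is where the oddness of $f$, the near-antisymmetry of the high-frequency pair $(\xi_2,\xi_3)$, and the mean value formulas \eqref{eq:mvt}--\eqref{eq:dmvt} must be combined carefully, and where one must verify that no high-frequency derivative ever reintroduces a forbidden factor $\mu/\lambda$. All $\epsilon$-dependence sits in $D$, and the single point guaranteeing $\epsilon$-independent constants is that the dissipative term is real, so it only helps the lower bound on $|D|$ while its derivatives remain controlled by $|D|$ itself; this is the structural difference from the pure KdV computation of \cite{Tao2}.
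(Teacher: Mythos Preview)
Your treatment of the denominator $D=3i\xi_1\xi_2\xi_3-\epsilon\beta_{\alpha,3}$ is correct and in fact more explicit than the paper's: the observation that the dispersive part is imaginary and the dissipative part real, so that $|D|\ges\lambda\mu^2$ uniformly in $\epsilon$, is exactly the point. The base case $|N|\les\lambda m^2(\lambda)$ via one mean-value step is also fine.

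The gap is in your extension of the numerator. You extend $N$ to be constant in the $\xi_2+\xi_3$ direction, so that on the full box
\[
\widetilde N=f(\xi_1)+f\big(-\tfrac{\xi_1}{2}+\zeta\big)+f\big(-\tfrac{\xi_1}{2}-\zeta\big),\qquad \zeta=\tfrac{\xi_2-\xi_3}{2}.
\]
On the hyperplane the two high-frequency arguments equal $\xi_2,\xi_3$ and have size $\mu$, but off the hyperplane nothing pins them there. On the full dyadic box $\{|\xi_2|,|\xi_3|\sim\mu\}$ the variable $\zeta$ ranges over all of $[-C\mu,C\mu]$; in particular, when $\xi_2$ and $\xi_3$ have the same sign one can have $|\zeta|\les\lambda$, and then $|-\xi_1/2\pm\zeta|\sim\lambda$, not $\mu$. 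At such points your mean-value step yields
\[
|\partial_\zeta\widetilde N|=\big|f'(\zeta-\tfrac{\xi_1}{2})-f'(\zeta+\tfrac{\xi_1}{2})\big|\les\lambda\sup_{|\xi'|\sim\lambda}|f''(\xi')|\sim m^2(\lambda),
\]
whereas the claim $|\partial_\zeta\widetilde N|\les\lambda m^2(\lambda)\mu^{-1}$ is smaller by a factor $\lambda/\mu$. A concrete check with $m$ as in \eqref{eq:Imul}, $N=1$, $s=-3/4$, $\lambda=10$, $\mu=10^3$, $(\xi_1,\xi_2,\xi_3)=(10,1010,990)$ gives $|\partial_2\widetilde N|\sim 10^{-2}$ while the target bound is $\sim 10^{-4}$. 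So the asserted symbol bound for $\widetilde N$ fails on the full box, and with it the final estimate \eqref{eq:m3}.

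The paper avoids this by a different (simpler) extension: for $\lambda\ll\mu$ it substitutes $\xi_3\mapsto -(\xi_1+\xi_2)$ only in the numerator, setting
\[
\widetilde N=f(\xi_1)+f(\xi_2)-f(\xi_1+\xi_2),
\]
and keeps the denominator $D$ as is. Now the high-frequency arguments are $\xi_2$ and $\xi_1+\xi_2$, which have size $\mu$ everywhere on the box, so each $\partial_2$-derivative genuinely gains $\mu^{-1}$ via \eqref{eq:mvt}, while $\partial_3\widetilde N\equiv0$. Your denominator analysis then combines with this numerator to give \eqref{eq:m3}. In short, extend by eliminating $\xi_3$ rather than by freezing $\xi_2+\xi_3$.
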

\begin{proof}
Since on the hyperplane $\xi_1+\xi_2+\xi_3=0$,
\[h_3=i(\xi_1^3+\xi_2^3+\xi_3^3)=3i\xi_1\xi_2\xi_3\]
is with a size about $\lambda \mu^2$ and
\[M_3(\xi_1,\xi_2,\xi_3)=-i[m(\xi_1)m(\xi_2+\xi_3)(\xi_2+\xi_3)]_{sym}=i(m^2(\xi_1)\xi_1+m^2(\xi_2)\xi_2+m^2(\xi_3)\xi_3),\]
if $\lambda \sim \mu$, we extend $\sigma_3$ by setting
\begin{equation}
\sigma_3(\xi_1,\xi_2,\xi_3)=-\frac{i(m^2(\xi_1)\xi_1+m^2(\xi_2)\xi_2+m^2(\xi_3)\xi_3)}{3i\xi_1\xi_2\xi_3-\epsilon(|\xi_1|^{2\alpha}+|\xi_2|^{2\alpha}+|\xi_3|^{2\alpha})},
\end{equation}
and if $\lambda\ll \mu$, we extend $\sigma_3$ by setting
\begin{equation}
\sigma_3(\xi_1,\xi_2,\xi_3)=-\frac{i(m^2(\xi_1)\xi_1+m^2(\xi_2)\xi_2-m^2(\xi_1+\xi_2)(\xi_1+\xi_2))}{3i\xi_1\xi_2\xi_3-\epsilon(|\xi_1|^{2\alpha}+|\xi_2|^{2\alpha}+|\xi_3|^{2\alpha})}.
\end{equation}
 From \eqref{eq:mvt} and \eqref{eq:eImul}, we see that \eqref{eq:m3} holds.
\end{proof}

We define on the hyperplane $\{(\xi_1,\xi_2,\xi_3)\in \Gamma_3(\R),
|\xi_1|\approx \lambda, |\xi_2|, |\xi_3|\approx \mu\}$
\begin{equation}
\sigma_3^{-}(\xi_1,\xi_2,\xi_3)=-\frac{i(m^2(\xi_1)\xi_1+m^2(\xi_2)\xi_2+m^2(\xi_3)\xi_3)}{3i\xi_1\xi_2\xi_3+\epsilon(|\xi_1|^{2\alpha}+|\xi_2|^{2\alpha}+|\xi_3|^{2\alpha})},
\end{equation}
and extend it as for $\sigma_3$. Then \eqref{eq:m3} also holds for
$\sigma_3^{-}$, and on the hyperplane $\xi_1+\xi_2+\xi_3=0$ we get
\begin{equation}\label{eq:diffm3}
|\sigma_3(\xi_1,\xi_2,\xi_3)-\sigma_3^{-}(\xi_1,\xi_2,\xi_3)|\les
\frac{\epsilon
|\xi|_{max}^{2\alpha}m^2(|\xi|_{min})|\xi|_{min}}{(\xi_1\xi_2\xi_3)^2+\epsilon^2|\xi|_{max}^{4\alpha}},
\end{equation}
where
\[|\xi|_{max}=\max(|\xi_1|,|\xi_2|,|\xi_3|), \quad |\xi|_{min}=\min(|\xi_1|,|\xi_2|,|\xi_3|).\]

Now we give the pointwise bounds for $\sigma_4$ which is key to
estimate the growth of $E^4_I(t)$. It has the same bound as in the
KdV case.
\begin{proposition}
Assume $m$ is of the form \eqref{eq:Imul}. In the region where
$|\xi_i|\sim N_i,|\xi_j+\xi_k|\sim N_{jk}$ for $N_i, N_{jk}$ dyadic,
\begin{equation}\label{eq:m4}
\frac{|M_4(\xi_1,\xi_2,\xi_3,\xi_4)|}{|h_4-\epsilon
\beta_{\alpha,4}|}\les
\frac{m^2(\min(N_i,N_{jk}))}{(N+N_1)(N+N_2)(N+N_3)(N+N_4)}.
\end{equation}
\end{proposition}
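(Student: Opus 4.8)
The plan is to adapt the pointwise analysis of the KdV I-method (\cite{Tao2}) to the dissipative symbols, keeping every constant uniform in $\epsilon$. The first step is a pair of elementary lower bounds on the denominator: on $\Gamma_4(\R)$ the dispersive part $h_4=i(\xi_1^3+\cdots+\xi_4^3)$ is purely imaginary while $\epsilon\beta_{\alpha,4}$ is real and non-negative, so
\[
\abs{h_4-\epsilon\beta_{\alpha,4}}=\brk{(\xi_1^3+\xi_2^3+\xi_3^3+\xi_4^3)^2+\epsilon^2\beta_{\alpha,4}^2}^{1/2}\ges \max(\abs{h_4},\,\epsilon\beta_{\alpha,4}).
\]
Using the factorization $\xi_1^3+\xi_2^3+\xi_3^3+\xi_4^3=3(\xi_1+\xi_2)(\xi_1+\xi_3)(\xi_1+\xi_4)$ valid on $\Gamma_4(\R)$, we get $\abs{h_4}\sim N_{12}N_{13}N_{14}$. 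Ordering $N_1\ges N_2\ges N_3\ges N_4$ (so $N_1\sim N_2$ since $\sum_i\xi_i=0$), the denominator $(N+N_1)(N+N_2)(N+N_3)(N+N_4)$ on the right of \eqref{eq:m4} is essentially fixed, and the task reduces to bounding $\abs{M_4}$ by $\abs{h_4}$ times that right side.

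Before the case analysis I would record the vanishing that produces $m^2(\min)$. In any symmetrised summand of $M_4$ the inner multiplier is $M_3(\xi_a,\xi_b,\xi_c+\xi_d)=i(m^2(\xi_a)\xi_a+m^2(\xi_b)\xi_b+m^2(\xi_c+\xi_d)(\xi_c+\xi_d))$, and if all three of $\xi_a,\xi_b,\xi_c+\xi_d$ have modulus $\les N$ then $m\equiv1$ there, so $M_3=i(\xi_1+\xi_2+\xi_3+\xi_4)=0$ and that summand vanishes. Thus $M_4$ switches off below the threshold $N$, which is exactly the content of $m^2(\min(N_i,N_{jk}))$. In the non-resonant configurations, where no pair-sum $N_{jk}$ is anomalously small, the bound \eqref{eq:m3} for $\sigma_3$ together with the factor $(\xi_c+\xi_d)$ and the factorization above yields \eqref{eq:m4} at once; here the dissipation is simply discarded through $\abs{h_4-\epsilon\beta_{\alpha,4}}\ges\abs{h_4}$ and $\abs{h_3-\epsilon\beta_{\alpha,3}}\ges\abs{h_3}$, so the estimate is inherited from the dispersive theory with an $\epsilon$-independent constant.

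The heart of the matter is the resonant regime, in which some pair-sum is much smaller than $N_{\max}$, say $N_{12}=N_{34}\ll N_{\max}$, so that $\abs{h_4}\sim N_{12}N_{\max}^2$ is small and the two summands of $M_4$ carrying the large pair-sums are each of size $\sim m^2(\cdot)\,N_{\max}^{-1}$, far above $\abs{h_4}$ times the right side of \eqref{eq:m4}; a term-by-term bound therefore fails and cancellation in the symmetrisation must be exploited. The mechanism I would use is the algebraic identity $\sigma_3(-\xi_1,-\xi_2,-\xi_3)=\sigma_3^{-}(\xi_1,\xi_2,\xi_3)$, immediate from the definitions (the numerator and the dispersive denominator flip sign, the dissipation does not). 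Since in this regime $\xi_2\approx-\xi_1$ and $\xi_4\approx-\xi_3$ up to errors of size $N_{12}$, the two offending summands, those combining the pairs $\{1,3\}$ and $\{2,4\}$ whose pair-sums are negatives of one another, collapse to leading order to
\[
(\xi_1+\xi_3)\,\big[\sigma_3^{-}-\sigma_3\big](\xi_1,\xi_3,-\xi_1-\xi_3),
\]
which vanishes identically when $\epsilon=0$, recovering the KdV cancellation; the pairs $\{1,4\},\{2,3\}$ are treated identically.

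The main obstacle, and where the argument genuinely departs from \cite{Tao2}, is to control the two residues uniformly in $\epsilon$. The dissipative residue $(\xi_1+\xi_3)[\sigma_3^{-}-\sigma_3]$ is bounded by \eqref{eq:diffm3}, producing a factor $\epsilon N_{\max}^{2\alpha}m^2(N_{12})N_{12}\,[(\xi_1\xi_2\xi_3)^2+\epsilon^2N_{\max}^{4\alpha}]^{-1}$ which, after division by $\abs{h_4-\epsilon\beta_{\alpha,4}}\ges\max(\abs{h_4},\epsilon\beta_{\alpha,4})$, is dominated by the right side of \eqref{eq:m4}; the crucial point is that the very $\epsilon\beta_{\alpha,4}$ appearing through \eqref{eq:diffm3} also sits in the lower bound for the denominator, so no power of $\epsilon^{-1}$ survives. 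The remaining residue, from replacing $\xi_2,\xi_4$ by $-\xi_1,-\xi_3$, is handled by the mean value formula \eqref{eq:mvt}, and, when two independent small differences occur, by the double mean value formula \eqref{eq:dmvt}, applied to the smooth extension of $\sigma_3$ whose derivative bounds are furnished by \eqref{eq:m3}; each application gains a factor $N_{12}$ that cancels the smallness of $\abs{h_4}\sim N_{12}N_{\max}^2$. Running this through the finitely many resonant subcases, organized by which pair-sums are small and by how $N_3,N_4$ compare to $N$ and to $N_{\max}$, and summing, completes \eqref{eq:m4}.
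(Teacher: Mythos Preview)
Your overall strategy matches the paper's: pair the six summands of $M_4$ via the identity $\sigma_3(-\xi_1,-\xi_2,-\xi_3)=\sigma_3^{-}(\xi_1,\xi_2,\xi_3)$, control the dissipative residue $\sigma_3-\sigma_3^{-}$ by \eqref{eq:diffm3} using the $\epsilon\beta_{\alpha,4}$ part of the denominator, and control the remaining differences by \eqref{eq:mvt} and \eqref{eq:dmvt}. For the non-resonant cases and the mildly resonant ones this is exactly what the paper does and your sketch would go through.

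The gap is in the most degenerate subcase, the paper's Case~2b, where $N_3,N_4,N_{12}$ are all $\ll N$ (so $N_{13}\sim N_{14}\sim N_1$). Your description treats the two large-pair-sum groups $II$ (pairs $\{1,3\},\{2,4\}$) and $III$ (pairs $\{1,4\},\{2,3\}$) ``identically'', i.e.\ independently, but this fails. For instance, after your pairing the mean-value residue in $II$ involves a difference of $\sigma_3^{-}$ in which one slot changes from $\xi_3$ to $-\xi_4$; this is a change of size $N_{12}$ in the \emph{small} slot, and the derivative bound \eqref{eq:m3} there costs a factor $N_3^{-1}$. One obtains at best $|II|/|h_4|\les 1/(N_1^3N_3)$, whereas the target is $1/(N_1^2N^2)$; when $N_1\sim N$ and $N_3\ll N$ this is too large. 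The same obstruction hits the dissipative residue: \eqref{eq:diffm3} gives $|\sigma_3-\sigma_3^{-}|\les \epsilon N_1^{2\alpha}N_3/(N_1^4N_3^2)$, and dividing by $\epsilon\beta_{\alpha,4}\sim\epsilon N_1^{2\alpha}$ again leaves $1/(N_1^3N_3)$.

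The paper resolves this by \emph{not} treating $II$ and $III$ separately. It writes $II+III$ and splits the outer factor $\xi_2+\xi_4=\xi_2+\xi_4$ and $\xi_2+\xi_3$ into the large piece $\xi_2$ and the small pieces $\xi_4,\xi_3$, producing terms $J_1,J_2$ (carrying $\xi_4,\xi_3$, hence harmless) and a four-term combination $J_3$ (carrying $\xi_2$). The latter requires a further multi-step decomposition $J_{31}+J_{32}+J_{33}$ and an explicit algebraic cancellation in $J_L$, where the genuine double mean value \eqref{eq:dmvt} is finally applied to expressions like
\[
m^2(\xi_2+\xi_4)(\xi_2+\xi_4)-m^2(\xi_2)\xi_2-m^2(\xi_2+\xi_3+\xi_4)(\xi_2+\xi_3+\xi_4)+m^2(\xi_2+\xi_3)(\xi_2+\xi_3).
\]
Your invocation of \eqref{eq:dmvt} ``when two independent small differences occur'' is the right instinct, but the structure that makes \eqref{eq:dmvt} applicable only emerges after this regrouping of $II+III$; it is not visible at the level of a single paired group. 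This second layer of cancellation between $II$ and $III$ is the missing step in your outline.
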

\begin{proof}
From symmetry, we can assume that $N_1\geq N_2\geq N_3\geq N_4$.
Since $\xi_1+\xi_2+\xi_3+\xi_4=0$, then $N_1\sim N_2$. We can also
assume that $N_1\sim N_2 \ges N$, otherwise $M_4$ vanishes, since
$m^2(\xi)=1$ if $|\xi|\leq N$. If $\max(N_{12},N_{13},N_{14})\ll
N_1$, then $\xi_3\approx-\xi_1,\ \xi_4\approx -\xi_1$, which
contradicts that $\xi_1+\xi_2+\xi_3+\xi_4=0$. Hence we get
$\max(N_{12},N_{13},N_{14})\sim N_1$. The right side of
\eqref{eq:m4} may be reexpressed as
\begin{equation}
\frac{m^2(\min(N_i,N_{jk}))}{{N_1}^2(N+N_3)(N+N_4)}.
\end{equation}

Since $\xi_1+\xi_2+\xi_3+\xi_4=0$, then
$h_4=\xi_1^3+\xi_2^3+\xi_3^3+\xi_4^3=3(\xi_1+\xi_2)(\xi_1+\xi_3)(\xi_1+\xi_4)$,
and we can write that
\begin{eqnarray}\label{eq:rm4}
CM_4(\xi_1,\xi_2,\xi_3,\xi_4)&=&[\sigma_3(\xi_1,\xi_2,\xi_3+\xi_4)(\xi_3+\xi_4)]_{sym}\nonumber\\
&=&\sigma_3(\xi_1,\xi_2,\xi_3+\xi_4)(\xi_3+\xi_4)+\sigma_3(\xi_1,\xi_3,\xi_2+\xi_4)(\xi_2+\xi_4)\nonumber\\
&&+\sigma_3(\xi_1,\xi_4,\xi_2+\xi_3)(\xi_2+\xi_3)+\sigma_3(\xi_2,\xi_3,\xi_1+\xi_4)(\xi_1+\xi_4)\nonumber\\
&&+\sigma_3(\xi_2,\xi_4,\xi_1+\xi_3)(\xi_1+\xi_3)+\sigma_3(\xi_3,\xi_4,\xi_1+\xi_2)(\xi_1+\xi_2)\nonumber\\
&=&[\sigma_3(\xi_1,\xi_2,\xi_3+\xi_4)-\sigma_3^{-}(-\xi_3,-\xi_4,\xi_3+\xi_4)](\xi_3+\xi_4)\nonumber\\
&&+[\sigma_3(\xi_1,\xi_3,\xi_2+\xi_4)-\sigma_3^{-}(-\xi_2,-\xi_4,\xi_2+\xi_4)](\xi_2+\xi_4)\nonumber\\
&&+[\sigma_3(\xi_1,\xi_4,\xi_2+\xi_3)-\sigma_3^{-}(-\xi_2,-\xi_3,\xi_2+\xi_3)](\xi_2+\xi_3)\nonumber\\
&=&I+II+III.
\end{eqnarray}
The bound \eqref{eq:m4} will follow from case by case analysis.

{\bf Case 1.} $|N_4|\ges \half{N}$.

{\bf Case 1a.} $N_{12}, N_{13}, N_{14}\ges N_1$.

For this case, we just use \eqref{eq:m3}, then we get
\begin{equation}
\frac{|M_4(\xi_1,\xi_2,\xi_3,\xi_4)|}{|h_4-\epsilon
\beta_{\alpha,4}|}\les\frac{|M_4(\xi_1,\xi_2,\xi_3,\xi_4)|}{|h_4|}\les
\frac{m^2(N_4)}{N_1N_2N_3N_4},
\end{equation}
which is acceptable.

{\bf Case 1b.} $N_{12}\ll N_1$, $N_{13}\ges N_1$, $N_{14}\ges N_1$.

Contribution of I. We just use \eqref{eq:m3}, then we get
\begin{equation}
\frac{|I|}{|h_4-\epsilon \beta_{\alpha,4}|}\les\frac{|I|}{|h_4|}\les
\frac{m^2(\min(N_4, N_{12}))}{N_1N_2N_3N_4},
\end{equation}
which is acceptable.

Contribution of II. We first write
\begin{eqnarray}
II&=&[\sigma_3(\xi_1,\xi_3,\xi_2+\xi_4)-\sigma_3^{-}(-\xi_2,-\xi_4,\xi_2+\xi_4)](\xi_2+\xi_4)\nonumber\\
&=&[\sigma_3(\xi_1,\xi_3,\xi_2+\xi_4)-\sigma_3^{-}(\xi_1,\xi_3,\xi_2+\xi_4)](\xi_2+\xi_4)\nonumber\\
&&+[\sigma_3^{-}(\xi_1,\xi_3,\xi_2+\xi_4)-\sigma_3^{-}(-\xi_2,-\xi_4,\xi_2+\xi_4)](\xi_2+\xi_4)\nonumber\\
&=&II_1+II_2.
\end{eqnarray}
Then from \eqref{eq:diffm3} we get
\begin{eqnarray}
\frac{II_1}{|h_4-\epsilon \beta_{\alpha,4}|}\les
\frac{II_1}{|\epsilon
\beta_{\alpha,4}|}\les\frac{m^2(N_4)}{N_1N_1N_1N_3}.
\end{eqnarray}
We now consider $II_2$. If $N_{12}\ges N_3$, then using
\eqref{eq:mvt} and \eqref{eq:m3}, or else if $N_{12}\ll N_3$, then
using \eqref{eq:mvt} twice and \eqref{eq:m3}, then
\begin{eqnarray}
\frac{II_2}{|h_4-\epsilon \beta_{\alpha,4}|}\les
\frac{II_2}{h_4}\les\frac{m^2(N_4)}{N_1N_1N_1N_3}.
\end{eqnarray}

Contribution of III. This is identical to II.

{\bf Case 1c.} $N_{12}\ll N_1$, $N_{13}\ll N_1$, $N_{14}\ges N_1$.

Since $N_{12}\ll N_1$, $N_{13}\ll N_1$, then $N_1\sim N_2\sim
N_3\sim N_4$.

Contribution of I. We first write
\begin{eqnarray}
I&=&[\sigma_3(\xi_1,\xi_2,\xi_3+\xi_4)-\sigma_3^{-}(\xi_1,\xi_2,\xi_3+\xi_4)](\xi_3+\xi_4)\nonumber\\
&&+[\sigma_3^{-}(\xi_1,\xi_2,\xi_3+\xi_4)-\sigma_3^{-}(-\xi_3,\xi_2,\xi_3+\xi_4)](\xi_3+\xi_4)\nonumber\\
&&+[\sigma_3^{-}(-\xi_3,\xi_2,\xi_3+\xi_4)-\sigma_3^{-}(-\xi_3,-\xi_4,\xi_3+\xi_4)](\xi_3+\xi_4)\nonumber\\
&=&I_1+I_2+I_3.
\end{eqnarray}
We use \eqref{eq:diffm3} for the first term and \eqref{eq:m3},
\eqref{eq:mvt} for the last two terms, then we get
\begin{eqnarray}
\frac{I}{|h_4-\epsilon \beta_{\alpha,4}|} \les \frac{I_1}{|\epsilon
\beta_{\alpha,4}|}+\frac{I_2}{|h_4|}+\frac{I_3}{|h_4|}\les\frac{m^2(N_{12})}{N_1^4}.
\end{eqnarray}

Contribution of II. This is identical to I.

Contribution of III. We first write
\begin{eqnarray}
III&=&[\sigma_3(\xi_1,\xi_4,\xi_2+\xi_3)-\sigma_3^{-}(-\xi_2,-\xi_3,\xi_2+\xi_3)](\xi_2+\xi_3)\nonumber\\
&=&[\sigma_3(\xi_1,\xi_4,\xi_2+\xi_3)-\sigma_3^{-}(\xi_1,\xi_4,\xi_2+\xi_3)](\xi_2+\xi_3)\nonumber\\
&&+1/2[\sigma_3^{-}(\xi_1,\xi_4,\xi_2+\xi_3)-\sigma_3^{-}(-\xi_2,-\xi_3,\xi_2+\xi_3)\nonumber\\
&&-\sigma_3^{-}(-\xi_3,-\xi_2,\xi_2+\xi_3)+\sigma_3^{-}(\xi_4,\xi_1,\xi_2+\xi_3)](\xi_2+\xi_3)\nonumber\\
&=&III_1+III_2.
\end{eqnarray}
We use \eqref{eq:diffm3} for the first term and \eqref{eq:dmvt} four
times for the second term, then we get
\begin{equation}
\frac{III}{|h_4-\epsilon \beta_{\alpha,4}|} \les
\frac{III_1}{|\epsilon
\beta_{\alpha,4}|}+\frac{III_2}{|h_4|}\les\frac{m^2(N_{1})}{N_1^4}.
\end{equation}

{\bf Case 1d.} $N_{12}\ll N_1$, $N_{13}\ges N_1$, $N_{14}\ll N_1$.

This case is identical to Case 1c.

{\bf Case 2.} $N_4\ll N/2$.

In this case we have $m^2(\min(N_i,N_{jk}))=1$, and $N_{13}\sim
|\xi_1+\xi_3|=|\xi_2+\xi_4|\sim N_1$. We discuss this case in the
following two subcases.

{\bf Case 2a.} $N_1/4>N_{12}\ges N/2$.

Since $N_4\ll N/2$ and $|\xi_3+\xi_4|=|\xi_1+\xi_2|\ges N/2$, then
$N_3\ges N/2$. From $|h_4|\sim N_{12}N_1^2$, then we bound the six
terms in \eqref{eq:rm4} respectively, and get
\begin{equation}
\frac{|M_4|}{|h_4-\epsilon\beta_{\alpha,4}|}\les
\frac{|M_4|}{|h_4|}\les \frac{1}{N_1^2N_3N},
\end{equation}
which is acceptable.

{\bf Case 2b.} $N_{12}\ll N/2$.

Since $N_{12}=N_{34}\ll N/2$ and $N_4\ll N/2$, then we must have
$N_3\ll N/2$, and $N_{13}\sim N_{14}\sim N_1$.

Contribution of I. Since $N_3, N_4, N_{34}\ll N/2$, then we have
$\sigma_3^{-}(-\xi_3,-\xi_4,\xi_3+\xi_4)=0$. Thus it follows from
\eqref{eq:m3} that
\begin{equation}
\frac{|I|}{|h_4-\epsilon\beta_{\alpha,4}|}\les
\frac{|\sigma_3(\xi_1,\xi_2,\xi_3+\xi_4)|}{N_1^2}\les
\frac{1}{N_1^4}.
\end{equation}

Contribution of II and III. We have two items of $N_3, N_4, N_{12}$
in the denominator, which will cause a problem. Thus we can't deal
with II and III separately, but we need to exploit the cancelation
between II and III. We rewrite
\begin{eqnarray}
II+III&=&[\sigma_3(\xi_1,\xi_3,\xi_2+\xi_4)-\sigma_3^{-}(-\xi_2,-\xi_4,\xi_2+\xi_4)](\xi_2+\xi_4)\nonumber\\
&&+[\sigma_3(\xi_1,\xi_4,\xi_2+\xi_3)-\sigma_3^{-}(-\xi_2,-\xi_3,\xi_2+\xi_3)](\xi_2+\xi_3)\nonumber\\
&=&[\sigma_3(\xi_1,\xi_3,\xi_2+\xi_4)-\sigma_3^{-}(-\xi_2,-\xi_4,\xi_2+\xi_4)]\xi_4\nonumber\\
&&+[\sigma_3(\xi_1,\xi_4,\xi_2+\xi_3)-\sigma_3^{-}(-\xi_2,-\xi_3,\xi_2+\xi_3)]\xi_3\nonumber\\
&&+[\sigma_3(\xi_1,\xi_3,\xi_2+\xi_4)-\sigma_3^{-}(-\xi_2,-\xi_4,\xi_2+\xi_4)\nonumber\\
&&\quad+\sigma_3(\xi_1,\xi_4,\xi_2+\xi_3)-\sigma_3^{-}(-\xi_2,-\xi_3,\xi_2+\xi_3)]\xi_2\nonumber\\
&=&J_1+J_2+J_3.
\end{eqnarray}
We first consider $J_1$. From
\begin{eqnarray}
\frac{|J_1|}{|h_4-\epsilon\beta_{\alpha,4}|}&\leq&
\frac{|[\sigma_3(\xi_1,\xi_3,\xi_2+\xi_4)-\sigma_3(-\xi_2,-\xi_4,\xi_2+\xi_4)]\xi_4|}{|h_4|}\nonumber\\
&&+\frac{|[\sigma_3(-\xi_2,-\xi_4,\xi_2+\xi_4)-\sigma_3^{-}(-\xi_2,-\xi_4,\xi_2+\xi_4)]\xi_4|}{|\epsilon\beta_{\alpha,4}|},
\end{eqnarray}
and \eqref{eq:diffm3} for the second term,  \eqref{eq:mvt} if
$N_{12}\ll N_3$ (in this case, $N_3\sim N_4$), and \eqref{eq:m3} if
$N_{12}\ges N_3$ for the first term, then we get
\begin{equation}
\frac{|J_1|}{|h_4-\epsilon\beta_{\alpha,4}|}\les \frac{1}{N_1^4}.
\end{equation}
The term $J_2$ is identical to the term $J_1$. Now we consider
$J_3$. We first assume that $N_{12}\ges N_3$. Then by the symmetry
of $\sigma_3$, we get
\begin{eqnarray}
J_3&=&[\sigma_3(\xi_1,\xi_3,\xi_2+\xi_4)-\sigma_3^{-}(-\xi_2,-\xi_4,\xi_2+\xi_4)\nonumber\\
&&\quad+\sigma_3(\xi_1,\xi_4,\xi_2+\xi_3)-\sigma_3^{-}(-\xi_2,-\xi_3,\xi_2+\xi_3)]\xi_2\nonumber\\
&=&[\sigma_3(\xi_1,\xi_3,\xi_2+\xi_4)-\sigma_3(-\xi_2-\xi_3,\xi_3,\xi_2)\nonumber\\
&&\quad+\sigma_3(\xi_1,\xi_4,\xi_2+\xi_3)-\sigma_3(-\xi_2-\xi_4,\xi_4,\xi_2)]\xi_2.
\end{eqnarray}
From \eqref{eq:mvt} and $N_{12}\ges N_3$, we get
\begin{equation}
\frac{|J_3|}{|h_4-\epsilon\beta_{\alpha,4}|}\les
\frac{|J_3|}{|h_4|}\les \frac{1}{N_1^4}.
\end{equation}
If $N_{12}\ll N_3$, then $N_3\sim N_4$. We first write
\begin{eqnarray}
J_3&=&[\sigma_3(\xi_1,\xi_3,\xi_2+\xi_4)-\sigma_3^{-}(\xi_1,\xi_3,\xi_2+\xi_4)\nonumber\\
&&\quad+\sigma_3(-\xi_2,-\xi_3,\xi_2+\xi_3)-\sigma_3^{-}(-\xi_2,-\xi_3,\xi_2+\xi_3)]\xi_2\nonumber\\
&&+[\sigma_3^{-}(-\xi_2,\xi_3,\xi_2+\xi_4)-\sigma_3^{-}(-\xi_2,-\xi_4,\xi_2+\xi_4)\nonumber\\
&&\quad+\sigma_3(\xi_1,\xi_4,\xi_2+\xi_3)-\sigma_3(\xi_1,-\xi_3,\xi_2+\xi_3)]\xi_2\nonumber\\
&&+[\sigma_3^{-}(\xi_1,\xi_3,\xi_2+\xi_4)-\sigma_3^{-}(-\xi_2,\xi_3,\xi_2+\xi_4)\nonumber\\
&&\quad+\sigma_3(\xi_1,-\xi_3,\xi_2+\xi_3)-\sigma_3(-\xi_2,-\xi_3,\xi_2+\xi_3)]\xi_2\nonumber\\
&=&J_{31}+J_{32}+J_{33}.
\end{eqnarray}
It follows from \eqref{eq:mvt} that
\begin{equation}
\frac{|J_{33}|}{|h_4-\epsilon\beta_{\alpha,4}|}\les
\frac{|J_{33}|}{|h_4|}\les \frac{1}{N_1^4}.
\end{equation}
It remains to bound $J_{31}$ and $J_{32}$. First we consider
$J_{31}$. Since $m^2(\xi_3)=1$, we rewrite $J_{31}$ by
\begin{eqnarray}\label{eq:J31}
J_{31}&=&[\sigma_3(\xi_1,\xi_3,\xi_2+\xi_4)-\sigma_3^{-}(\xi_1,\xi_3,\xi_2+\xi_4)\nonumber\\
&&\quad+\sigma_3(-\xi_2,-\xi_3,\xi_2+\xi_3)-\sigma_3^{-}(-\xi_2,-\xi_3,\xi_2+\xi_3)]\xi_2\nonumber\\
&=&A(\xi_1,\xi_3,\xi_2+\xi_4)(m^2(\xi_1)\xi_1+\xi_3+m^2(\xi_2+\xi_4)(\xi_2+\xi_4))\xi_2\nonumber\\
&&+A(-\xi_2,-\xi_3,\xi_2+\xi_3)(-m^2(\xi_2)\xi_2-\xi_3+m^2(\xi_2+\xi_3)(\xi_2+\xi_3))\xi_2\nonumber\\
&=&[A(\xi_1,\xi_3,\xi_2+\xi_4)-A(-\xi_2,-\xi_3,\xi_2+\xi_3)]\xi_3\xi_2\nonumber\\
&&-[A(\xi_1,\xi_3,\xi_2+\xi_4)-A(-\xi_2,-\xi_3,\xi_2+\xi_3)]\xi_2\nonumber\\
&&\times[-m^2(\xi_2)\xi_2+m^2(\xi_2+\xi_3)(\xi_2+\xi_3)]\nonumber\\
&&+A(\xi_1,\xi_3,\xi_2+\xi_4)\xi_2\nonumber\\
&&\times[m^2(\xi_1)\xi_1+m^2(\xi_2+\xi_4)(\xi_2+\xi_4)-m^2(\xi_2)\xi_2+m^2(\xi_2+\xi_3)(\xi_2+\xi_3)]
\end{eqnarray}
where
\[A(\xi_1,\xi_2,\xi_3)=\frac{2\epsilon (|\xi_1|^{2\alpha}+|\xi_2|^{2\alpha}+|\xi_3|^{2\alpha})}{|\xi_1\xi_2\xi_3|^2+\epsilon^2(|\xi_1|^{2\alpha}+|\xi_2|^{2\alpha}+|\xi_3|^{2\alpha})^2}.\]
It's easy to see that $A(\xi_1,\xi_2,\xi_3)$ satisfies
\begin{equation}
|\partial_{\xi_i}A(\xi_1,\xi_2,\xi_3)|\les
\frac{|A(\xi_1,\xi_2,\xi_3)|}{|\xi_i|},\quad i=1,2,3.
\end{equation}
For the first two terms in \eqref{eq:J31} we use \eqref{eq:mvt} by
writing
\begin{eqnarray*}
&&A(\xi_1,\xi_3,\xi_2+\xi_4)-A(-\xi_2,-\xi_3,\xi_2+\xi_3)\\
&=&A(\xi_1,\xi_3,\xi_2+\xi_4)-A(-\xi_2,\xi_3,\xi_2+\xi_4)\\
&&+A(-\xi_2,\xi_3,\xi_2+\xi_4)-A(-\xi_2,\xi_3,\xi_2+\xi_3).
\end{eqnarray*}
For the third term, we note that
\begin{eqnarray}
&&m^2(\xi_1)\xi_1+m^2(\xi_2+\xi_4)(\xi_2+\xi_4)-m^2(\xi_2)\xi_2+m^2(\xi_2+\xi_3)(\xi_2+\xi_3)\nonumber\\
&=&m^2(\xi_2+\xi_4)(\xi_2+\xi_4)-m^2(\xi_2)\xi_2\nonumber\\
&&-m^2(\xi_2+\xi_3+\xi_4)(\xi_2+\xi_3+\xi_4)+m^2(\xi_2+\xi_3)(\xi_2+\xi_3),
\end{eqnarray}
thus we can apply \eqref{eq:dmvt}. Therefore, we get
\begin{equation}
\frac{|J_{31}|}{|h_4-\epsilon\beta_{\alpha,4}|}\les
\frac{|J_{31}|}{|\epsilon\beta_{\alpha,4}||}\les \frac{1}{N_1^4}.
\end{equation}
Last we consider $J_{32}$. We denote
\begin{eqnarray}
B(\xi_1,\xi_2,\xi_3)&=&\frac{1}{i\xi_1\xi_2\xi_3-\epsilon(|\xi_1|^{2\alpha}+|\xi_2|^{2\alpha}+|\xi_3|^{2\alpha})}-\frac{1}{i\xi_1\xi_2\xi_3}\nonumber\\
&=&\frac{\epsilon(|\xi_1|^{2\alpha}+|\xi_2|^{2\alpha}+|\xi_3|^{2\alpha})}{[i\xi_1\xi_2\xi_3-\epsilon(|\xi_1|^{2\alpha}+|\xi_2|^{2\alpha}+|\xi_3|^{2\alpha})]i\xi_1\xi_2\xi_3}.
\end{eqnarray}
It's easy to see that $B(\xi_1,\xi_2,\xi_3)$ satisfies
\begin{equation}\label{eq:eB}
|\partial_{\xi_i}B(\xi_1,\xi_2,\xi_3)|\les
\frac{|B(\xi_1,\xi_2,\xi_3)|}{|\xi_i|},\quad i=1,2,3.
\end{equation}
Let
\begin{equation}\label{eq:m3kdv}
\tilde{\sigma}_3(\xi_1,\xi_2,\xi_3)=\frac{M(\xi_1,\xi_2,\xi_3)}{i\xi_1\xi_2\xi_3},
\end{equation}
then we can rewrite $J_{32}$ by
\begin{eqnarray}\label{eq:J32}
J_{32}&=&[\sigma_3^{-}(-\xi_2,\xi_3,\xi_2+\xi_4)-\sigma_3^{-}(-\xi_2,-\xi_4,\xi_2+\xi_4)\nonumber\\
&&\quad+\sigma_3(\xi_1,\xi_4,\xi_2+\xi_3)-\sigma_3(\xi_1,-\xi_3,\xi_2+\xi_3)]\xi_2\nonumber\\
&=&B(-\xi_2,\xi_4,\xi_2+\xi_4)[-m^2(-\xi_2)\xi_2-\xi_4+m^2(\xi_2+\xi_4)(\xi_2+\xi_4)]\xi_2\nonumber\\
&&+B(\xi_1,\xi_4,\xi_2+\xi_3)[m^2(\xi_1)\xi_1+\xi_4+m^2(\xi_2+\xi_3)(\xi_2+\xi_3)]\xi_2\nonumber\\
&&-B(\xi_2,\xi_3,\xi_2+\xi_4)[-m^2(-\xi_2)\xi_2+\xi_3+m^2(\xi_2+\xi_4)(\xi_2+\xi_4)]\xi_2\nonumber\\
&&-B(\xi_1,-\xi_3,\xi_2+\xi_3)[m^2(\xi_1)\xi_1-\xi_3+m^2(\xi_2+\xi_3)(\xi_2+\xi_3)]\xi_2\nonumber\\
&&+[\tilde{\sigma}_3(-\xi_2,\xi_3,\xi_2+\xi_4)-\tilde{\sigma}_3(\xi_1,-\xi_3,\xi_2+\xi_3)\nonumber\\
&&\quad-\tilde{\sigma}_3(-\xi_2,-\xi_4,\xi_2+\xi_4)+\tilde{\sigma}_3(\xi_1,\xi_4,\xi_2+\xi_3)]\xi_2.
\end{eqnarray}
For the first four terms in \eqref{eq:J32}, we can bound them by the
same way as for $J_{31}$, using \eqref{eq:eB} and the symmetry of
$B$ that $B(\xi_1,-\xi_2,\xi_3)=B(-\xi_1,\xi_2,\xi_3)$. For the last
term, it follows from \eqref{eq:m3kdv} and $m^2(\xi_3)=m^2(\xi_4)=1$
that
\begin{eqnarray}
J_L&=&[\tilde{\sigma}_3(-\xi_2,\xi_3,\xi_2+\xi_4)-\tilde{\sigma}_3(\xi_1,-\xi_3,\xi_2+\xi_3)\nonumber\\
&&\quad-\tilde{\sigma}_3(-\xi_2,-\xi_4,\xi_2+\xi_4)+\tilde{\sigma}_3(\xi_1,\xi_4,\xi_2+\xi_3)]\xi_2\nonumber\\
&=&\frac{-m^2(\xi_2)\xi_2+\xi_3+m^2(\xi_2+\xi_4)(\xi_2+\xi_4)}{-\xi_2\xi_3(\xi_2+\xi_4)}\xi_2\nonumber\\
&&\quad
-\frac{-m^2(\xi_2)\xi_2-\xi_4+m^2(\xi_2+\xi_4)(\xi_2+\xi_4)}{\xi_2\xi_4(\xi_2+\xi_4)}\xi_2\nonumber\\
&&+\frac{m^2(\xi_1)\xi_1+\xi_4+m^2(\xi_2+\xi_3)(\xi_2+\xi_3)}{\xi_1\xi_4(\xi_2+\xi_3)}\xi_2\nonumber\\
&&\quad
-\frac{m^2(\xi_1)\xi_1-\xi_3+m^2(\xi_2+\xi_3)(\xi_2+\xi_3)}{-\xi_1\xi_3(\xi_2+\xi_3)}\xi_2.
\end{eqnarray}
Note that there is a cancelation. Therefore,
\begin{eqnarray}\label{eq:JL}
J_L&=&-\frac{\xi_3+\xi_4}{\xi_3\xi_4}\frac{-m^2(\xi_2)\xi_2+m^2(\xi_2+\xi_4)(\xi_2+\xi_4)}{\xi_2(\xi_2+\xi_4)}\xi_2\nonumber\\
&&+\frac{\xi_3+\xi_4}{\xi_3\xi_4}\frac{m^2(\xi_1)\xi_1+m^2(\xi_2+\xi_3)(\xi_2+\xi_3)}{\xi_1(\xi_2+\xi_3)}\xi_2.
\end{eqnarray}
We rewrite \eqref{eq:JL} by
\begin{eqnarray*}
&&-\frac{\xi_3+\xi_4}{\xi_3\xi_4}\frac{-m^2(\xi_2)\xi_2+m^2(\xi_2+\xi_4)(\xi_2+\xi_4)+m^2(\xi_1)\xi_1+m^2(\xi_2+\xi_3)(\xi_2+\xi_3)}{\xi_2(\xi_2+\xi_4)}\xi_2\\
&&+\frac{\xi_3+\xi_4}{\xi_3\xi_4}[m^2(\xi_1)\xi_1+m^2(\xi_2+\xi_3)(\xi_2+\xi_3)][\frac{1}{\xi_1(\xi_2+\xi_3)}+\frac{1}{\xi_2(\xi_2+\xi_4)}]\xi_2.
\end{eqnarray*}
Therefore, we use \eqref{eq:dmvt} for the first term, and
\eqref{eq:mvt} for the second term, and finally we conclude that
\begin{equation}
\frac{|J_L|}{|h_4-\epsilon\beta_{\alpha,4}|}\les
\frac{|J_L|}{|h_4|}\les \frac{1}{N_1^4},
\end{equation}
which completes the proof of the proposition.
\end{proof}

With the estimate of $\sigma_4$, we immediately get the estimate of
$M_5$. We have the same bound as in the KdV case.
\begin{proposition}
If $m$ is of the form \eqref{eq:Imul}, then
\begin{equation}
|M_5(\xi_1,\ldots,\xi_5)|\les
\left[\frac{m^2(N_{*45})N_{45}}{(N+N_1)(N+N_2)(N+N_3)(N+N_{45})}\right]_{sym},
\end{equation}
where
\[N_{*45}=\min(N_1,N_2,N_3,N_{45},N_{12},N_{13},N_{23}).\]
\end{proposition}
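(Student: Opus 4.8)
The plan is to read the bound off directly from the previous proposition, since $M_5$ is built out of $\sigma_4$ in exactly the way that $M_4$ was built out of $\sigma_3$. Recall that by definition
\[
M_5(\xi_1,\ldots,\xi_5)=-2i[\sigma_4(\xi_1,\xi_2,\xi_3,\xi_4+\xi_5)(\xi_4+\xi_5)]_{sym},
\]
and that the previous proposition, combined with $\sigma_4=-M_4/(h_4-\epsilon\beta_{\alpha,4})$, furnishes the pointwise bound
\[
|\sigma_4(\xi_1,\xi_2,\xi_3,\xi_4)|\les \frac{m^2(\min(N_i,N_{jk}))}{(N+N_1)(N+N_2)(N+N_3)(N+N_4)}
\]
valid in the region $|\xi_i|\sim N_i$, $|\xi_j+\xi_k|\sim N_{jk}$. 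First I would freeze one representative summand of the symmetrization, namely $\sigma_4(\xi_1,\xi_2,\xi_3,\xi_4+\xi_5)(\xi_4+\xi_5)$, and apply the $\sigma_4$ bound with its fourth argument equal to $\xi_4+\xi_5$, whose frequency is $N_{45}=|\xi_4+\xi_5|$. This immediately produces the denominator $(N+N_1)(N+N_2)(N+N_3)(N+N_{45})$, while the extra factor $(\xi_4+\xi_5)$ contributes $|\xi_4+\xi_5|\sim N_{45}$ to the numerator, matching the claimed $N_{45}$.

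The one genuine point to check is the argument of $m^2$. For $\sigma_4$ evaluated at $(\xi_1,\xi_2,\xi_3,\xi_4+\xi_5)$ the relevant frequencies are $N_1,N_2,N_3,N_{45}$ together with the pairwise-sum frequencies of these four arguments. On the hyperplane $\xi_1+\xi_2+\xi_3+\xi_4+\xi_5=0$ one has $|\xi_i+(\xi_4+\xi_5)|=|\xi_j+\xi_k|$ whenever $\{i,j,k\}=\{1,2,3\}$, so the only pairwise sums that actually occur are $N_{12},N_{13},N_{23}$. Hence the minimum appearing in the $\sigma_4$ bound is precisely
\[
N_{*45}=\min(N_1,N_2,N_3,N_{45},N_{12},N_{13},N_{23}),
\]
and by the monotonicity properties of $m$ recorded in \eqref{eq:eImul} we get $m^2(\min(N_i,N_{jk}))=m^2(N_{*45})$ for this summand.

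Finally, since every term of the symmetrization of $\sigma_4(\xi_1,\xi_2,\xi_3,\xi_4+\xi_5)(\xi_4+\xi_5)$ is controlled by the corresponding term of the quantity
\[
\frac{m^2(N_{*45})N_{45}}{(N+N_1)(N+N_2)(N+N_3)(N+N_{45})},
\]
applying $[\,\cdot\,]_{sym}$ to both sides — which is a positive-coefficient average over $S_5$, hence order-preserving on nonnegative functions — yields the asserted estimate. I do not expect a serious obstacle here; the only care needed is the bookkeeping that reduces the $\sigma_4$ minimum to $N_{*45}$ through the hyperplane relations, together with the observation that the constant stays independent of $\epsilon$, which it does because the underlying $\sigma_4$ (equivalently $M_4$) bound was already $\epsilon$-independent.
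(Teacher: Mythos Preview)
Your proposal is correct and matches the paper's approach: the paper simply states that the $M_5$ bound follows immediately from the $\sigma_4$ estimate and coincides with the KdV case, without writing out any details. You have supplied exactly the routine bookkeeping the paper omits --- identifying the pairwise-sum frequencies of $(\xi_1,\xi_2,\xi_3,\xi_4+\xi_5)$ with $N_{12},N_{13},N_{23}$ via the hyperplane relation, and then symmetrizing --- so there is nothing further to add.
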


So far we have showed that the multipliers $M_i$, $i=3,4,5$ have the
same bounds as for the KdV equation. We list now some propositions.
\begin{proposition}\label{pm5}
Let $w_i(x,t)$ be functions of space-time with Fourier support
$|\xi|\sim N_i$, $N_i$ dyadic. Then
\begin{equation}
\left|\int_0^\delta\int \prod_{i=1}^5w_i(x,t)dxdt\right|\les
\prod_{j=1}^3\norm{w_j}_{F^{1/4}(\delta)}\norm{w_4}_{F^{-3/4}(\delta)}\norm{w_5}_{F^{-3/4}(\delta)}.
\end{equation}
\end{proposition}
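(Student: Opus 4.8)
The plan is to prove the estimate by distributing the five factors through a mixed-norm H\"older inequality and then controlling each single-frequency factor by a Strichartz- or smoothing-type norm, transferred from the free Airy evolution via Proposition \ref{p21} and Proposition \ref{p41}. Since each $w_i$ is supported in $|\xi|\sim N_i$, for a single dyadic frequency one has $\norm{w_i}_{F^{\sigma}}\sim N_i^{\sigma}\norm{w_i}_{F^0}$, so it suffices to produce the frequency weights $N_1^{1/4}N_2^{1/4}N_3^{1/4}N_4^{-3/4}N_5^{-3/4}$ together with $\prod_i\norm{w_i}_{F^0}$. Throughout I assume $\delta\leq 1$, consistent with the restricted spaces $F^\sigma(\delta)$ used elsewhere.

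First I would apply H\"older in $x$ for fixed $t$ with spatial exponents $(3,3,3,\infty,\infty)$ and then H\"older in $t\in[0,\delta]$ with temporal exponents $(\infty,\infty,\infty,2,2)$; both sets of reciprocals sum to $1$, so
\[
\left|\int_0^\delta\int\prod_{i=1}^5 w_i\,dxdt\right|\les \prod_{j=1}^3\norm{w_j}_{L_t^\infty L_x^3}\prod_{j=4}^5\norm{w_j}_{L_t^2L_x^\infty}.
\]
For the first three factors, the Sobolev embedding $H^{1/6}(\R)\hookrightarrow L^3(\R)$ together with Proposition \ref{p41} gives $\norm{w_j}_{L_t^\infty L_x^3}\les\norm{w_j}_{F^{1/6}(\delta)}\sim N_j^{1/6}\norm{w_j}_{F^0}$, and since $N_j\geq 1$ this is bounded by $N_j^{1/4}\norm{w_j}_{F^0}=\norm{w_j}_{F^{1/4}(\delta)}$. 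For the last two factors I would invoke the sharp Kato smoothing estimate for the Airy group, $\norm{\partial_x W_0(t)f}_{L_x^\infty L_t^2}\les\norm{f}_{L^2}$; transferring its frequency-localized form $\norm{W_0(t)P_N f}_{L_x^\infty L_t^2}\les N^{-1}\norm{P_N f}_{L^2}$ ($N\geq 2$) through the mechanism in the proof of Proposition \ref{p21}, and using Minkowski's inequality $\norm{\cdot}_{L_t^2L_x^\infty}\leq\norm{\cdot}_{L_x^\infty L_t^2}$, yields $\norm{w_j}_{L_t^2L_x^\infty}\les N_j^{-1}\norm{w_j}_{F^0}\leq N_j^{-3/4}\norm{w_j}_{F^0}=\norm{w_j}_{F^{-3/4}(\delta)}$. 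Multiplying the five bounds reproduces exactly the claimed inequality, and passing to restricted norms is done by running the argument on extensions $\tilde w_i$ agreeing with $w_i$ on $[0,\delta]$ and taking the infimum.

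The one step requiring care, and the main obstacle, is that the Kato estimate carries the homogeneous weight $|\xi|^{-1}$ and so degenerates for the lowest block $N_j\sim 1$; hence Proposition \ref{p21} cannot be applied globally with $Y=L_x^\infty L_t^2$, and I would treat $N_j\sim 1$ by hand. There Bernstein's inequality ($N_j\sim1$) and Proposition \ref{p41} give $\norm{w_j}_{L_{[0,\delta]}^2 L_x^\infty}\les\norm{w_j}_{L_{[0,\delta]}^2 L_x^2}\leq\delta^{1/2}\norm{w_j}_{L_t^\infty L_x^2}\les\norm{w_j}_{F^0(\delta)}$, which is acceptable since $N_j^{-3/4}\sim 1$ in this range and $\delta\leq1$. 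It is worth emphasizing that the whole argument is deliberately lossy, relying only on $N_j^{1/6}\leq N_j^{1/4}$ and $N_j^{-1}\leq N_j^{-3/4}$ for $N_j\geq1$; consequently the resonance identity $\sum_i\xi_i=0$ plays no role beyond guaranteeing that the integrand is well defined, and the estimate holds uniformly over all frequency configurations of the $w_i$.
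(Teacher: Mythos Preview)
There is a genuine gap: the Minkowski step is applied in the wrong direction. For mixed Lebesgue norms one has, for $p\le q$, $\norm{f}_{L^q_y L^p_x}\le \norm{f}_{L^p_x L^q_y}$; with $p=2$, $q=\infty$ this reads $\norm{f}_{L^\infty_x L^2_t}\le \norm{f}_{L^2_t L^\infty_x}$, the reverse of what you wrote. Thus Kato smoothing, which places $P_N W_0(t)f$ in $L^\infty_x L^2_t$ with a gain of $N^{-1}$, gives no control of the \emph{larger} norm $L^2_t L^\infty_x$, and the chain $\norm{w_j}_{L^2_t L^\infty_x}\les N_j^{-1}\norm{w_j}_{F^0}$ is unjustified.

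Nor can the scheme be repaired by substituting the correct linear estimate. The best available bound of the type you need is the Kenig--Ponce--Vega Strichartz estimate $\norm{D_x^{1/4}W_0(t)f}_{L^4_t L^\infty_x}\les\norm{f}_{L^2}$, which over $[0,\delta]$ yields only $\norm{w_j}_{L^2_t L^\infty_x}\les N_j^{-1/4}\norm{w_j}_{F^0}$. Combined with your $N_j^{1/6}$ on the first three factors, this produces the weight $N_1^{1/6}N_2^{1/6}N_3^{1/6}N_4^{-1/4}N_5^{-1/4}$, and one cannot conclude that this is $\les N_1^{1/4}N_2^{1/4}N_3^{1/4}N_4^{-3/4}N_5^{-3/4}$ in general: take $N_1\sim N_2\sim N_3\sim 1$ and $N_4\sim N_5\sim N\gg 1$ (permitted by $\sum\xi_i=0$), where the required bound is $N^{-3/2}$ but the linear H\"older argument gives only $N^{-1/2}$. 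The H\"older splitting $(3,3,3,\infty,\infty)\times(\infty,\infty,\infty,2,2)$ is simply too crude to manufacture a gain of order $N_{\max}^{-3/2}$ from two high-frequency factors.

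The proof the paper has in mind (that of Lemma~5.1 in \cite{Tao2}, transferred to $F^s$ via Proposition~\ref{p21}) does not treat the five factors by a single linear H\"older inequality. Instead it groups them and applies the KdV \emph{bilinear} $L^2_{x,t}$ estimate, pairing each high-frequency factor $w_4,w_5$ with a lower-frequency one to gain essentially $N_{\max}^{-1}$ per pair, and places the remaining factor in an $L^\infty$-type norm via Bernstein; this is exactly the mechanism underlying Propositions~\ref{punif}--\ref{punil} in the paper and is what produces the $F^{-3/4}$ weights. Any corrected argument must exploit this bilinear gain rather than only linear Strichartz/smoothing.
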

\begin{proof}
It follows from the same argument as for the proof of Lemma 5.1 in
\cite{Tao2} with the Proposition \ref{p21}.
\end{proof}

\begin{proposition}
If the associated multiplier $m$ is of the form \eqref{eq:Imul} with
$s=-3/4+$, then
\begin{equation}
\left|\int_0^\delta \Lambda_5(M_5;u_1,\ldots,u_5)dt\right|\les
N^{-\beta}\prod_{i=1}^5\norm{Iu_i}_{F^0(\delta)},
\end{equation}
where $\beta=3+\frac{3}{4}-$.
\end{proposition}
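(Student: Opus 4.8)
The plan is to run the standard dyadic-decomposition argument for the I-method quintilinear term, using the pointwise bound on $M_5$ from the preceding proposition to supply the multiplier constant and Proposition \ref{pm5} to supply the space-time multilinear smoothing. First I would pass from $u_i$ to $Iu_i$ by writing $\widehat{u_i}(\xi_i)=m(\xi_i)^{-1}\widehat{Iu_i}(\xi_i)$, so that the effective symbol becomes $M_5(\xi_1,\ldots,\xi_5)\prod_{i=1}^5 m(\xi_i)^{-1}$. Then I would perform a Littlewood--Paley decomposition, localizing each $\xi_i$ to a dyadic shell $|\xi_i|\sim N_i$ and (since $M_5$ sees $\xi_4,\xi_5$ only through $\xi_4+\xi_5$ after symmetrization) also localizing the paired frequency $|\xi_4+\xi_5|\sim N_{45}$. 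On the hyperplane $\xi_1+\cdots+\xi_5=0$ the two largest shells are forced to be comparable, $N_{(1)}\sim N_{(2)}$, and by the form \eqref{eq:Imul} the factor $m^2\equiv 1$ below frequency $N$, so $M_5$ vanishes unless $N_{(1)}\gtrsim N$; these two constraints drive the summation.

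On each such block the preceding pointwise bound gives that $|M_5|$ is dominated by the constant $m^2(N_{*45})N_{45}\big[(N+N_1)(N+N_2)(N+N_3)(N+N_{45})\big]^{-1}$, which I would factor as a product of single-frequency weights and absorb, together with the $m(\xi_i)^{-1}$ factors, into the five inputs. This reduces the block to the plain space-time integral $\int_0^\delta\int\prod_i w_i\,dx\,dt$ to which Proposition \ref{pm5} applies directly; bounding $|\Lambda_5(M_5)|$ by the supremum of $|M_5|$ times the integral of the moduli is legitimate because the $X_k$ and hence $F^{s}$ norms depend only on $|\ft(\cdot)|$, so passing to moduli of the space-time Fourier transforms leaves the right-hand side unchanged. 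The freedom of matching the five functions to the three $F^{1/4}$ slots and the two $F^{-3/4}$ slots I would spend by assigning the two largest frequencies to the $F^{-3/4}$ slots, converting the $F^{\pm}$ weights on shell $N_i$ into $N_i^{1/4}$ or $N_i^{-3/4}$ and leaving the factors $\norm{P_{N_i}Iu_i}_{F^0(\delta)}$. The numerical constant on each block is then
\[
m^2(N_{*45})\,\frac{N_{45}}{(N+N_1)(N+N_2)(N+N_3)(N+N_{45})}\,\prod_{i}m(N_i)^{-1}\,N_{(1)}^{-3/4}N_{(2)}^{-3/4}N_{(3)}^{1/4}N_{(4)}^{1/4}N_{(5)}^{1/4},
\]
and a direct check in the balanced case $N_i\sim N_{45}\sim N$, where every $m$-factor is $1$, yields $N^{-3}\cdot N^{-3/4}=N^{-15/4}$, exactly $N^{-\beta}$ with $\beta=3+3/4$.

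It then remains to carry out the dyadic summation and verify that the balanced configuration is the worst. The \emph{main obstacle} is precisely this bookkeeping: the weights $\prod_i m(N_i)^{-1}$ are a genuine loss in the high-frequency regime, growing like $(N_i/N)^{3/4-}$ once $N_i\gg N$, and one must confirm that the four $(N+N_i)^{-1}$ gains from the $M_5$ bound---reinforced by the $N_{(1)}^{-3/4}N_{(2)}^{-3/4}$ smoothing on the two highest frequencies---dominate them, so that the sum over $N_1\sim N_2\gtrsim N$ and the sums over the lower shells $N_3\ge N_4\ge N_5\gtrsim 1$ all converge geometrically. I would organize the estimate by cases according to which index realizes the minimum $N_{*45}$ and according to whether $N_{45}$ is comparable to or much smaller than $N_4,N_5$ (the genuine high-high-to-low subcase for the paired frequency), checking in each that the exponent of $N$ stays $\le -(3+3/4)$, with the slack $-$ (the replacement $s=-3/4+$) absorbing the endpoint logarithms produced by the summation.
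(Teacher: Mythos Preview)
Your proposal is correct and is precisely the approach the paper has in mind: the paper's own proof simply says ``follow the proof of Lemma~5.2 in \cite{Tao2} and use Proposition~\ref{pm5},'' and your outline---dyadic localization of each $\xi_i$ (and of the paired frequency), pulling out the pointwise $M_5$ bound as a constant on each block, invoking Proposition~\ref{pm5} with the two highest frequencies placed in the $F^{-3/4}$ slots, then summing geometrically using $N_{(1)}\sim N_{(2)}\gtrsim N$ and the slack in $s=-3/4+$---is exactly that argument transported from the $X^{s,b}$ setting to the $F^s$ setting. Your balanced-case numerics $N^{-3}\cdot N^{-3/4}=N^{-15/4}$ identify the extremal configuration correctly.
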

\begin{proof}
This proposition can be proved by following the proof of Lemma 5.2
in \cite{Tao2} and using proposition \ref{pm5}. We omit the details.
\end{proof}

\begin{proposition}
Let $I$ be defined with the multiplier $m$ of the form
\eqref{eq:Imul} and $s=-3/4$. Then
\begin{equation}
|E_I^4(t)-E_I^2(t)|\les \norm{Iu(t)}_{L^2}^3+\norm{Iu(t)}_{L^2}^4.
\end{equation}
\begin{proof}
Since $E_I^4(t)=E_I^2(t)+\Lambda_3(\sigma_3)+\Lambda_4(\sigma_4)$
and the bound for $\sigma_3$, $\sigma_4$ are the same as in the KdV
case, this proposition follows immediately from Lemma 6.1 in
\cite{Tao2}.
\end{proof}
\end{proposition}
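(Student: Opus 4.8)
The plan is to start from the algebraic structure of the modified energies. By construction $E_I^3=E_I^2+\Lambda_3(\sigma_3)$ and $E_I^4=E_I^3+\Lambda_4(\sigma_4)$, so that at each fixed time $t$,
\[
E_I^4(t)-E_I^2(t)=\Lambda_3(\sigma_3)+\Lambda_4(\sigma_4).
\]
Thus it suffices to bound the fixed-time trilinear form $\Lambda_3(\sigma_3;u,u,u)$ by $\norm{Iu(t)}_{L^2}^3$ and the quadrilinear form $\Lambda_4(\sigma_4;u,u,u,u)$ by $\norm{Iu(t)}_{L^2}^4$. The essential point, already supplied by the two preceding propositions, is that the pointwise bounds \eqref{eq:m3} for $\sigma_3$ and \eqref{eq:m4} for $\sigma_4$ hold with constants independent of $\epsilon$ and coincide term by term with those in the KdV case \cite{Tao2}: the dissipative corrections $-\epsilon\beta_{\alpha,k}$ appearing in the denominators never enlarge the multipliers. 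Consequently all that remains is a routine dyadic summation, which I now sketch.

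For the trilinear term I would pass to the $I$-variable by writing $\widehat{u}(\xi_i)=\widehat{Iu}(\xi_i)/m(\xi_i)$, so that
\[
\Lambda_3(\sigma_3;u,u,u)=\Lambda_3\Big(\frac{\sigma_3}{m(\xi_1)m(\xi_2)m(\xi_3)};Iu,Iu,Iu\Big),
\]
and decompose dyadically according to $|\xi_1|\sim\lambda\le|\xi_2|,|\xi_3|\sim\mu$ (recall that $\xi_1+\xi_2+\xi_3=0$ forces $N_{max}\sim N_{med}$). On each piece \eqref{eq:m3} together with $m(\xi)\sim m(|\xi|)$ gives the symbol bound $m(\lambda)m(\mu)^{-2}\mu^{-2}$. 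Estimating the remaining convolution by H\"older and the Bernstein inequality $\norm{P_\lambda f}_{L^\infty}\les\lambda^{1/2}\norm{P_\lambda f}_{L^2}$ on the low-frequency factor, one is left with a dyadic sum $\sum_{\lambda\le\mu}c(\lambda,\mu)\,a_\lambda b_\mu^2$ with $a_\lambda=\norm{P_\lambda Iu}_{L^2}$, $b_\mu=\norm{P_\mu Iu}_{L^2}$ and, for $s=-3/4$, $c(\lambda,\mu)\les N^{-3/4}\lambda^{-1/4}\mu^{-1/2}$. Since the negative powers of $\lambda$ and $\mu$ make the dyadic series geometric, Cauchy--Schwarz in $\lambda$ and then in $\mu$ closes the sum and yields $|\Lambda_3(\sigma_3)|\les\norm{Iu}_{L^2}^3$, with room to spare. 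The regime where some frequency lies below $N$, and there $m\equiv1$, is handled identically and more easily.

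The quadrilinear term $\Lambda_4(\sigma_4;u,u,u,u)$ is treated in the same spirit, and this is the step I expect to be the main obstacle. I would again convert to $Iu$, decompose into the frequency regimes $N_i\sim N_{jk}$ underlying \eqref{eq:m4}, apply the pointwise bound $|\sigma_4|\les m^2(\min(N_i,N_{jk}))\prod_i(N+N_i)^{-1}$, place Bernstein on the two lowest factors, and sum over all dyadic configurations. The delicate case is the $high\times high\to low\times low$ interaction, where two frequencies sit below $N$: there the cancellation between the pieces $II$ and $III$ of $\sigma_4$ (the very cancellation exploited in the previous proposition to obtain \eqref{eq:m4}) is what keeps the summand geometrically summable. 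Because the resulting symbol bound is, term by term, exactly the one arising in the KdV analysis, the summation is identical to that in \cite[Lemma 6.1]{Tao2} and produces $|\Lambda_4(\sigma_4)|\les\norm{Iu}_{L^2}^4$. Adding the two estimates gives the claim, and I anticipate no new difficulty beyond checking that the $\epsilon$-uniform multiplier bounds feed correctly into the KdV summation scheme.
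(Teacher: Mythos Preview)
Your proposal is correct and follows exactly the paper's route: the paper observes that $E_I^4-E_I^2=\Lambda_3(\sigma_3)+\Lambda_4(\sigma_4)$, notes that the $\epsilon$-uniform pointwise bounds \eqref{eq:m3} and \eqref{eq:m4} on $\sigma_3,\sigma_4$ coincide with the KdV bounds, and then simply invokes \cite[Lemma~6.1]{Tao2}. You do the same, with the added virtue of sketching the dyadic summation behind that lemma; there is no new idea required beyond feeding the multiplier bounds into the KdV scheme, which you correctly identify.
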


We state a variant local well-posedness result which follows from
slight argument in the last section. This is used to iterate the
solution in the I-method.
\begin{proposition}
If $s>-3/4$, then \eqref{eq:kdvb} is uniformly locally well-posed
for data $\phi$ satisfying $I\phi\in L^2(\R)$. Moreover, the
solution exists on a time interval $[0,\delta]$ with lifetime
\begin{equation}
\delta\sim \norm{I\phi}_{L^2}^{-\alpha},\ \alpha>0,
\end{equation}
and the solution satisfies the estimate
\begin{equation}
\norm{Iu}_{F^s(\delta)}\les \norm{I\phi}_{L^2}.
\end{equation}
\end{proposition}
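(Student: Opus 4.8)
The plan is to run the contraction argument of Section 4, but carried out for $Iu$ in the space $F^0$ rather than for $u$ in $F^s$, using the scaling \eqref{eq:scaling} both to produce the smallness that closes the iteration and to convert a unit-time existence statement into the claimed lifetime. Since $I$ is a Fourier multiplier in the space variable, it commutes with $\partial_x$, with $W_\epsilon^\alpha$, with $W_0$, and hence with the operator $L$ of \eqref{defL}. Applying $I$ to \eqref{eq:trunintekdvbuni} therefore yields
\begin{equation}
Iu(t)=\psi(t)W_\epsilon^\alpha(t)(I\phi)-\psi(t)L\big(\partial_x I(\psi^2u^2)\big),
\end{equation}
and by Propositions \ref{p43} and \ref{p45} (both uniform in $\epsilon$) together with Proposition \ref{p42} we get $\norm{Iu}_{F^0}\les\norm{I\phi}_{L^2}+\norm{\partial_x I(\psi^2u^2)}_{N^0}$, so everything reduces to the $I$-modified bilinear estimate
\begin{equation}\label{Istar}
\norm{\partial_x I(uv)}_{N^0}\les\norm{Iu}_{F^0}\norm{Iv}_{F^0},
\end{equation}
with constant independent of $N$ and $\epsilon$ (the harmless cutoff $\psi^2$ being absorbed by the almost-invariance of the $X_k$ norm under smooth temporal multipliers established in Section \ref{notation}).

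The heart of the matter is \eqref{Istar}. Writing $\widehat u=\widehat{Iu}/m$ and $\widehat v=\widehat{Iv}/m$, the map $(Iu,Iv)\mapsto\partial_x I(uv)$ carries the extra symbol $m(\xi_1+\xi_2)/\big(m(\xi_1)m(\xi_2)\big)$ relative to the plain bilinear form treated in Proposition \ref{p412}. I would therefore rerun the dyadic decomposition of that proof (regions $A_1$--$A_4$) and, in each region, check that this extra factor is dominated by the dyadic gain already furnished by Propositions \ref{punif}--\ref{punil}. Since $m$ is nonincreasing in $|\xi|$ and equal to $1$ for $|\xi|\le N$, the factor is $\les 1$ in every $low\times high$ and $low\times low$ interaction; the only delicate region is $high\times high\to low$, where $m(\xi_1+\xi_2)=1$ while $m(\xi_1)\sim m(\xi_2)\sim N^{-s}2^{sk}$ at the common high frequency $2^k$. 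There the extra factor is $\sim N^{2s}2^{-2sk}$, and pairing it with the gain $k^32^{-3k/2}$ of \eqref{eq:biunicase2} produces the factor $k^3N^{2s}2^{-(3/2+2s)k}$, which is uniformly bounded in $N$ (as $N^{2s}\le 1$) and summable in $k$ \emph{precisely} when $s>-3/4$.

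Finally, scaling and closure. Using $\widehat{\phi_\lambda}(\xi)=\lambda\widehat\phi(\xi/\lambda)$ and the monotonicity of $m$, one checks $\norm{I\phi_\lambda}_{L^2}\les\lambda^{3/2+s}\norm{I\phi}_{L^2}$; since $3/2+s>0$ I fix a small absolute $r$ and choose $\lambda\le 1$ with $\norm{I\phi_\lambda}_{L^2}=r$, which forces $\lambda\sim\norm{I\phi}_{L^2}^{-1/(3/2+s)}$ and hence, after undoing the scaling, a lifetime $\delta\sim\lambda^3\sim\norm{I\phi}_{L^2}^{-\alpha}$ with $\alpha=3/(3/2+s)>0$. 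On the rescaled problem the map $\Phi_\phi(u)=\psi W_\epsilon^\alpha\phi-\psi L\big(\partial_x(\psi^2u^2)\big)$ satisfies, by the linear estimates and \eqref{Istar}, $\norm{I\Phi_\phi(u)}_{F^0}\le cr+c\norm{Iu}_{F^0}^2$ together with the matching Lipschitz bound, so it is a contraction on $\{u:\norm{Iu}_{F^0(1)}\le 2cr\}$ once $r$ is small; the fixed point is the desired solution on unit time, and rescaling back gives $\norm{Iu}_{F^s(\delta)}\le\norm{Iu}_{F^0(\delta)}\les\norm{I\phi}_{L^2}$ (using $s\le 0$). The main obstacle is \eqref{Istar}: one must verify that the derivative loss of $I$ in the $high\times high\to low$ interaction is exactly compensated by the smoothing gain, which is where both the uniformity in $N$ and the threshold $s>-3/4$ originate.
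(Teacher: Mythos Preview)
Your argument is correct and is exactly the ``slight argument'' the paper alludes to: apply $I$, use that it commutes with the linear evolution so that Propositions~\ref{p43} and \ref{p45} reduce matters to the $I$-modified bilinear estimate, then close by scaling just as in Section~4. Two small points to tighten: the multiplier ratio $m(\xi_1+\xi_2)/\big(m(\xi_1)m(\xi_2)\big)$ is \emph{not} literally $\les 1$ in the low$\times$high region once the low frequency exceeds $N$ (it equals $1/m(\xi_{\min})$), but the factor $2^{-k_1}$ in the corresponding dyadic estimate absorbs this loss since $s>-1$; and the high$\times$high$\to$high block $A_2$ is just as critical as $A_3$, since there the ratio is $\sim 1/m(2^k)$ and, against the gain $k^3 2^{-3k/4}$, one again needs $3/4+s>0$ for summability and uniformity in $N$.
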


With these propositions and the scaling \eqref{eq:scaling}, we can
show Theorem \ref{t12} by using the same argument in \cite{Tao2}. We
omit the details.

\section{Limit Behavior}
In this section we prove our third result. It is well-known that
\eqref{eq:kdv} is completely integrable and has infinite
conservation laws, and as a corollary one obtains that let $v$ be a
smooth solution to \eqref{eq:kdv}, for any $k\in \Z_{+}$,
\begin{equation}
\sup_{t\in\R}\norm{v(t)}_{H^k}\les \norm{v_0}_{H^k}.
\end{equation}
There are less symmetries for \eqref{eq:kdvb}. We can still expect
that the $H^k$ norm of the solution remains bounded for a finite
time $T>0$, since the dissipative term behaves well for $t>0$. We
already see that for $k=0$ from \eqref{eq:L2law}. Now we prove for
$k=1$ which will suffice for our purpose. We do not pursue for
$k\geq 2$.

Assume $u$ is a smooth solution to \eqref{eq:kdvb}. Let
$H[u]=\int_\R (u_x)^2-\frac{2}{3}u^3+u^2 dx$, then by the equation
\eqref{eq:kdvb} and partial integration
\begin{eqnarray*}
\frac{d}{dt}H[u]&=&\int_\R 2u_x \partial_x(u_t)-2u^2u_t+2uu_t dx\\
&=&\int_\R 2u_x(-u_{xxxx}-\epsilon |\partial_x|^{2\alpha}\partial_xu-(u^2)_{xx})dx\\
&&+\int_\R 2u^2(u_{xxx}+\epsilon |\partial_x|^{2\alpha}u+(u^2)_{x})dx+\int_\R-2\epsilon(\Lambda^\alpha u)^2 dx\\
 &=&\int_\R -2\epsilon (\Lambda^{1+\alpha}u)^2+2\epsilon
u^2\Lambda^{2\alpha}u-2\epsilon(\Lambda^\alpha u)^2 dx\\
&\leq& -\epsilon\int_\R
(\Lambda^{2\alpha}u)^2+2u^2\Lambda^{2\alpha}u dx,
\end{eqnarray*}
where we denote $\Lambda=|\partial_x|$. Thus we have
\begin{equation}
\frac{d}{dt}H[u]+\frac{\epsilon}{2} \norm{\Lambda^{2\alpha}u}_2^2
\les \norm{u}_4^4.
\end{equation}
Using Galiardo-Nirenberg inequality
\[
\norm{u}_3^3 \les \norm{u}_2^{5/2}\norm{u_x}_2^{1/2}, \quad
\norm{u}_4^4 \les \norm{u}_2^{3}\norm{u_x}_2
\]
and Cauchy-Schwarz inequality, we get
\begin{equation}\label{eq:H1law}
\sup_{[0,T]}\norm{u(t)}_{H^1}+\epsilon^{1/2}\brk{ \int_{0}^T
\norm{\Lambda^{2\alpha}u(\tau)}_2^2d\tau}^{1/2}\leq
C(T,\norm{\phi}_{H^1}), \quad \forall\ T>0.
\end{equation}

Assume $u_\epsilon$ is a $L^2$-strong solution to \eqref{eq:kdvb}
obtained in the last section and v is a $L^2$-strong solution to
\eqref{eq:kdv} in \cite{Tao2}, with initial data $\phi_1,\phi_2\in
L^2$ respectively. We still denote by $u_\epsilon, v$ the extension
of $u_\epsilon, v$. From the scaling \eqref{eq:scaling}, we may
assume first that $\norm{\phi_1}_{L^2},\norm{\phi_2}_{L^2}\ll 1$.
Let $w=u_\epsilon-v$, $\phi=\phi_1-\phi_2$, then $w$ solves
\begin{eqnarray}\label{eq:diff}
\left \{
\begin{array}{l}
w_t+w_{xxx}+\epsilon |\partial_x|^{2\alpha}u_\epsilon+(w(v+u_\epsilon))_x=0, t\in \R_{+}, x\in \R,\\
v(0)=\phi.
\end{array}
\right.
\end{eqnarray}
We first view $\epsilon |\partial_x|^{2\alpha}u_\epsilon$ as a
perturbation to the difference equation of the KdV equation, and
consider the integral equation of \eqref{eq:diff}
\begin{equation}
w(x,t)=W_0(t)\phi-\int_0^tW_0(t-\tau)[\epsilon
|\partial_x|^{2\alpha}u_\epsilon+(w(v+u_\epsilon))_x]d\tau, \ t\geq
0.
\end{equation}
Then $w$ solves the following integral equation on $t\in [0,1]$,
\begin{eqnarray}
w(x,t)&=&\psi(t)[W_0(t)\phi-\int_0^tW_0(t-\tau)
\chi_{\R_+}(\tau)\psi(\tau)\epsilon
|\partial_x|^{2\alpha}u_\epsilon (\tau)d\tau\nonumber\\
&&\quad
-\int_0^tW_0(t-\tau)\partial_x(\psi^2(\tau)w(v+u_\epsilon))(\tau)d\tau
].
\end{eqnarray}
By Proposition \ref{p42} and Proposition
\ref{p43},\ref{p45},\ref{p412}, we get
\begin{eqnarray}
\norm{w}_{F^0}\les
\norm{\phi}_{L^2}+\epsilon\norm{u_\epsilon}_{L^2_{[0,2]}\dot{H}_x^{2\alpha}}+\norm{w}_{F^0}(\norm{v}_{F^0}+\norm{u_\epsilon}_{F^0}).
\end{eqnarray}
Since from Theorem \ref{t12} we have
\[\norm{v}_{F^0}\les \norm{\phi_2}_{L^2}\ll 1,\quad \norm{u_\epsilon}_{F^0}\les \norm{\phi_1}_{L^2}\ll 1,\]
then we get that
\begin{equation}
\norm{w}_{F^0}\les
\norm{\phi}_{L^2}+\epsilon\norm{u_\epsilon}_{L^2_{[0,2]}\dot{H}_x^{2\alpha}}.
\end{equation}
From Proposition \ref{p41} and \eqref{eq:H1law} we get
\begin{equation}
\norm{u_\epsilon-v}_{C([0,1], L^2)}\les
\norm{\phi_1-\phi_2}_{L^2}+\epsilon^{1/2}C(\norm{\phi_1}_{H^1},\norm{\phi_2}_{L^2}).
\end{equation}

For general $\phi_1,\phi_2 \in L^2$, using the scaling
\eqref{eq:scaling}, then we immediately get that there exists
$T=T(\norm{\phi_1}_{L^2},\norm{\phi_2}_{L^2})>0$ such that
\begin{equation}\label{eq:limitL2}
\norm{u_\epsilon-v}_{C([0,T], L^2)}\les
\norm{\phi_1-\phi_2}_{L^2}+\epsilon^{1/2}C(T,\norm{\phi_1}_{H^1},\norm{\phi_2}_{L^2}).
\end{equation}
Therefore, \eqref{eq:limitL2} automatically holds for any $T>0$, due
to \eqref{eq:L2law} and \eqref{eq:H1law}.

\begin{proof}[Proof of Theorem \ref{t13}]
For fixed $T>0$, we need to prove that $\forall\ \eta>0$, there
exists $\sigma>0$ such that if $0<\epsilon<\sigma$ then
\begin{equation}\label{eq:limitHs}
\norm{S_T^\epsilon(\varphi)-S_T(\varphi)}_{C([0,T];H^s)}<\eta.
\end{equation}
We denote $\varphi_K=P_{\leq K}\varphi$. Then we get
\begin{eqnarray}
&&\norm{S_T^\epsilon(\varphi)-S_T(\varphi)}_{C([0,T];H^s)}\nonumber\\
&\leq&\norm{S_T^\epsilon(\varphi)-S_T^\epsilon(\varphi_K)}_{C([0,T];H^s)}\nonumber\\
&&+\norm{S_T^\epsilon(\varphi_K)-S_T(\varphi_K)}_{C([0,T];H^s)}+\norm{S_T(\varphi_K)-S_T(\varphi)}_{C([0,T];H^s)}.
\end{eqnarray}
From Theorem \ref{t12} and \eqref{eq:limitL2}, we get
\begin{eqnarray}
\norm{S_T^\epsilon(\varphi)-S_T(\varphi)}_{C([0,T];H^s)}\les
\norm{\varphi_K-\varphi}_{H^s}+\epsilon^{1/2}C(T,K,
\norm{\varphi}_{H^s}).
\end{eqnarray}
We first fix $K$ large enough, then let $\epsilon$ go to zero,
therefore \eqref{eq:limitHs} holds.
\end{proof}

\noindent{\bf Acknowledgment.} Part of the work was finished while
the first named  author was visiting the Department of Mathematics
at the University of Chicago under the auspices of China Scholarship
Council. The authors are grateful to Professor Carlos E. Kenig for
his valuable suggestions. This work is supported in part by the
National Science Foundation of China, grant 10571004; and the 973
Project Foundation of China, grant 2006CB805902, and the Innovation
Group Foundation of NSFC, grant 10621061.

\footnotesize

\end{document}